  \newcommand{\scc}{\cellcolor[gray]{0.75}} 
\newtheoremstyle{mythm}
{6pt}
{6pt}
{\it}
{}
{\bf}
{.}
{.5em}
{}
\newtheoremstyle{mydef}
{6pt}
{6pt}
{}
{}
{\bf}
{.}
{.5em}
{}
\newtheoremstyle{myrem}
{6pt}
{6pt}
{}
{}
{\bf}
{.}
{.5em}
{}
\theoremstyle{mythm}      
\newtheorem{theorem}{Theorem}
\newtheorem{lemma}[theorem]{Lemma}
\newtheorem{proposition}[theorem]{Proposition}
\newtheorem{corollary}[theorem]{Corollary}
\theoremstyle{mydef}      
\newtheorem{definition}[theorem]{Definition}
\newtheorem{example}[theorem]{Example}
\theoremstyle{myrem}
\newtheorem{remark}[theorem]{Remark}
\numberwithin{equation}{section}
\newcommand{\bullit}{\item[$\bullet$]}
\newcommand{\tr}{{\rm tr}}
\newcommand{\ad}{\mathop{\rm ad}}
\newcommand{\Span}{{\rm Span}}
\newcommand{\SL}{{\mathfrak{sl}}}
\newcommand{\gl}{{\mathfrak{gl}}}
\newcommand{\GL}{{\rm GL}}
\newcommand{\diag}{\mathrm{diag}}
\newcommand{\g}{{\mathfrak g}}
\newcommand{\fa}{{\mathfrak a}}
\newcommand{\fb}{{\mathfrak b}}
\newcommand{\fd}{{\mathfrak d}}
\newcommand{\fn}{{\mathfrak n}}
\newcommand{\fc}{{\mathfrak c}}
\newcommand{\fk}{{\mathfrak k}}
\newcommand{\fp}{{\mathfrak p}}
\newcommand{\fu}{{\mathfrak u}}
\newcommand{\fs}{{\mathfrak s}}
\newcommand{\ft}{{\mathfrak t}}
\newcommand{\fv}{{\mathfrak v}}
\newcommand{\fz}{{\mathfrak z}}
\newcommand{\h}{{\mathfrak h}}
\newcommand{\su}{{\mathfrak{su}}}
\newcommand{\so}{{\mathfrak{so}}}
\newcommand{\C}{\mathbb{C}}
\newcommand{\Q}{\mathbb{Q}}
\newcommand{\R}{\mathbb{R}}
\newcommand{\Z}{\mathbb{Z}}
\newcommand{\Aut}{{\rm Aut}}
\newcommand{\my}[2]{\langle #1,#2^\vee\rangle}
\newcommand{\myem}[1]{{\textbf{\textit{#1}}}}
\newcounter{ithmcount}
\newenvironment{items}{
\begin{list}{$\alph{item})$}
{\labelwidth18pt \leftmargin18pt \topsep3pt \itemsep2pt \parsep0pt}}
{\end{list}}
\newenvironment{iprf}{\begin{list}{{\rm
	\alph{ithmcount})}}{\usecounter{ithmcount}\labelwidth-5pt
      \leftmargin0pt \topsep3pt \itemsep1pt \parsep2pt}}{\qedhere\end{list}}
\newenvironment{ithm}{\begin{list}{{\rm \alph{ithmcount})}}{\usecounter{ithmcount}\labelwidth18pt
      \leftmargin18pt \topsep3pt \itemsep1pt \parsep2pt}}{\end{list}}
\begin{document}
 
\vspace*{1.3cm}

\title{Regular subalgebras and nilpotent orbits of real graded Lie algebras}

\author[H.\ Dietrich]{Heiko Dietrich}
 \address{School of Mathematical Sciences, Monash University, VIC 3800, Australia}
 \email{heiko.dietrich@monash.edu}
 \author[P.\ Faccin]{Paolo Faccin}
 \address{Department of Mathematics, University of Trento, Povo (Trento), Italy}
 \email{faccin@science.unitn.it}
 \author[W.\ A.\ de Graaf]{Willem A.\ de Graaf}
 \address{Department of Mathematics, University of Trento, Povo (Trento), Italy}
\email{degraaf@science.unitn.it}
\thanks{Dietrich was supported by an ARC-DECRA Fellowship, project
  DE140100088.}

\begin{abstract} 
For a semisimple Lie algebra over the complex numbers,
  Dynkin (1952) developed an algorithm to classify the regular
  semisimple subalgebras, up to conjugacy by the inner automorphism
  group. For a graded semisimple Lie algebra over the complex numbers,
  Vinberg (1979)  showed that a classification of a certain type of regular
subalgebras (called carrier algebras) yields a classification of the nilpotent 
orbits in a homogeneous component
of that Lie algebra. Here we consider these problems for  (graded)
semisimple Lie algebras over the real numbers. First, we describe an algorithm to
classify the regular semisimple subalgebras of a real
semisimple Lie algebra. This also yields an algorithm for listing, 
up to conjugacy, the carrier algebras in a real graded
semisimple real algebra. We then discuss what needs to be done to obtain a
classification of the nilpotent orbits from that; such classifications
have 
applications in differential geometry and  theoretical physics. Our algorithms
are implemented in the language of the computer algebra system {\sf GAP}, using 
our package {\sf CoReLG}; we report on example computations.
\end{abstract}

\keywords{real Lie algebras; regular subalgebras; carrier algebras; real nilpotent orbits}

\maketitle


\section{Introduction}

\noindent  Let $\g^c$ be a complex semisimple Lie algebra with adjoint group $G^c$. Classifying the semisimple subalgebras of $\g^c$ up to $G^c$-conjugacy is an extensively studied problem, partly motivated by applications in theoretical physics;  
see for example \cite{dGraafSS, dyn0, dyn, logru, minchenko}. In
\cite{dyn, logru}, this classification is split into two parts: the  
construction of  regular semisimple subalgebras, that is, semisimple 
subalgebras normalised by a Cartan subalgebra of $\g^c$, and  the 
construction of semisimple subalgebras not contained in any regular proper 
subalgebra. Dynkin \cite{dyn} presented, among other things, an algorithm 
to list the regular semisimple subalgebras of $\g^c$, up to $G^c$-conjugacy. 
One of the main facts underpinning this algorithm is that two semisimple subalgebras, 
normalised by the same Cartan subalgebra $\h^c$, are $G^c$-conjugate if and only
if their root systems are conjugate under the Weyl group of the root system
of $\g^c$ (with respect to $\h^c$). The situation is more intricate for a real 
semisimple Lie algebra $\g$.
As a consequence, here the aim is usually not to classify the semisimple subalgebras, but to decide whether a given real form $\fa$ of a complex  subalgebra $\fa^c$ of $\g^c=\g\otimes_\R\C$ is contained in $\g$, see
\cite{cornsub1, cornsub2, cornsub3, cornsub4, grafac, komrakov}. 
However, for some classes of subalgebras a classification up to $G$-conjugacy
can be obtained, with $G$  the adjoint group of $\g$. Examples are the subalgebras isomorphic to 
$\SL_2(\R)$, whose classification up to $G$-conjugacy is equivalent to 
classifying the nilpotent orbits in $\g$; the latter can be
performed using the Kostant-Sekiguchi correspondence, see \cite{colmcgov, dg}.
It is the first aim of this paper  to show that also the regular
semisimple subalgebras of $\g$ can be classified up to $G$-conjugacy; we describe
an effective algorithm for this task. The main issue is that, in
general, there exist Cartan subalgebras of $\g$ which are not $G$-conjugate, and
that a given regular semisimple subalgebra can be normalised by several non-conjugate
Cartan subalgebras. To get some order in this situation, we introduce the
notion of ``strong $\h$-regularity''; we show that two strongly $\h$-regular subalgebras are $G$-conjugate if and only
if their root systems are conjugate under the real Weyl group of $\g$ (with
respect to $\h$). 

The second problem motivating this paper is the determination of the 
nilpotent orbits in a homogeneous component of a graded semisimple Lie 
algebra. Over the complex numbers (or, more generally, over an algebraically closed 
field of characteristic 0), the theory of orbits in a graded semisimple Lie 
algebra has been developed by Vinberg
\cite{vinberg,vinberg2,vinberg3}. Let $\g^c=\bigoplus_{i\in\Z_m}
\g_i^c$ be a graded semisimple complex Lie algebra, where we write
$\Z_m=\Z/m\Z$, and $\Z_m=\Z$ when $m=\infty$. The component $\g_0^c$ is a
reductive subalgebra of $\g^c$, and $G_0^c$ is defined as the connected subgroup
of $G^c$ with Lie algebra $\g_0^c$. This group acts on the homogeneous
component $\g_1^c$, and the question is what its orbits are. It turns out that
many constructions regarding the action of $G^c$ on $\g^c$ can be generalised
to this setting. In particular, there exists a Jordan decomposition,
so that the orbits of $G_0^c$ in $\g_1^c$ naturally split into three types:
nilpotent, semisimple, and mixed. Of special interest are nilpotent
orbits: in contrast to semisimple orbits, there exist only finitely
many of them. It is known that every nonzero nilpotent  $e\in \g_1^c$
lies in a homogeneous $\SL_2$-triple $(h,e,f)$, with $h\in \g_0^c$
and $f\in \g_{-1}^c$, and that  $G_0^c$-conjugacy of nilpotent elements in $\g_1^c$ is equivalent
to $G_0^c$-conjugacy of the corresponding homogeneous $\SL_2$-triples. Concerning the
classification of the nilpotent orbits, Vinberg \cite{vinberg2} introduced
a new construction: the support, or carrier algebra, of a nilpotent element.
This is a regular $\Z$-graded semisimple subalgebra $\fc^c\leq \g^c$ with certain
extra properties; here ``$\Z$-graded'' means that $\fc^c = \bigoplus_{i\in \Z}
\fc^c_i$ with $\fc_i^c\subseteq \g^c_{i\bmod m}$, and ``regular'' means that
$\fc$ is normalised by a Cartan subalgebra of $\g_0$. The main point
is that the nilpotent $G_0^c$-orbits in $\g_1^c$ are in one-to-one correspondence with the
$G_0^c$-classes of carrier algebras, so that a classification of the former can be obtained from a 
classification of the latter. This approach  has been used in several instances
to classify nilpotent orbits, see for example \cite{elavin, antelash, gatim}; it has also served as the basis of 
several algorithms to classify the nilpotent orbits in $\g_1^c$, see
\cite{gra15, litt8}.

Here we consider the analogous problem over the real numbers: $\g = \bigoplus_{i\in\Z_m}
\g_i$ is a real graded semisimple Lie algebra, $G_0$ is the connected
Lie subgroup of $G$ with Lie algebra $\g_0$, and we want to classify the
nilpotent $G_0$-orbits in $\g_1$. Again, this is much more complicated
than the complex case. Nevertheless, several 
attempts have been made to develop methods for such a
classification. For example, Djokovi{\'c} \cite{djoTri}
considered a $\Z$-grading of the split real Lie algebra of type $E_8$,
so that $\g_0$ is isomorphic to $\gl(8,\R)$, and $\g_1\cong\wedge^3(\R^8)$ as $\g_0$-modules; this is the kind of problem that
is of interest in differential geometry, see \cite{hitchin}. Djokovi{\'c}
classified the corresponding nilpotent orbits (which in this case form all orbits) 
using an approach based on Galois
cohomology; it is not clear whether this can serve as the basis of a 
more general algorithm.  Van L\^e \cite{hongvanle} devised a general method,
whose main idea is to list the possible homogeneous $\SL_2$-triples; however, 
the main step in her approach uses heavy machinery from computational algebraic 
geometry over the real numbers, so that it is questionable whether it will be possible to implement
this method successfully. The problem of classifying real nilpotent orbits is  also considered in the
physics literature (with applications in supergravity), see for example \cite{F4phys,G2phys}. In \cite{F4phys}, the nilpotent orbits
corresponding to a $\Z_2$-grading of the split Lie algebra of type 
$F_4$ are classified using a method based on listing
$\SL_2$-triples. The main idea is to search for real Cayley triples --
and depends on the unproven assumption that each orbit has a representative lying in such
a triple. Using elements of $G_0$, many such triples are shown to be 
conjugate, and the remaining ones are proven non-conjugate by using several
invariants.

Here we approach the problem of classifying the nilpotent orbits
in a real graded semisimple Lie algebra by first listing the carrier algebras.
For this reason, we formulate the algorithm for listing the regular subalgebras
in the more general context of $\Z$-graded regular subalgebras of a graded real
semisimple Lie algebra; the algorithm for listing the regular subalgebras is
then a straightforward specialisation to the trivial grading. From
this, we devise an algorithm for listing the carrier algebras in a
real graded semisimple Lie algebra up to conjugacy; this is the second
aim of this paper. Unfortunately, it is not immediately straightforward to get a classification of
the nilpotent orbits from this list of carrier algebras: unlike in the complex
case, a given carrier algebra can correspond to more than one orbit,
or to no orbit at all. In order to overcome this difficulty, we present a
number of ad hoc techniques, partly similar to the ones used in \cite{F4phys}: 
given a carrier algebra, we first reduce the set of nilpotent orbits to
which it belongs by using elements of the group $G_0$; the remaining
elements are then shown to be non-conjugate by using suitable invariants.

\subsection{Main results and structure of the
  paper}\label{sec:structure} We use the previous notation. In Section \ref{secSS} we recall relevant notation and concepts for
semisimple Lie algebras over the complex and real numbers. As noted above,
the real Weyl group plays a fundamental role in our algorithms; in Section \ref{sec:realweyl} we describe an algorithm to construct the 
real Weyl group of a real semisimple Lie algebra, relative to a given 
Cartan subalgebra. 
The subalgebra $\g_0$ is not semisimple in general, but
reductive. We discuss a number of well-known properties of reductive
subalgebras in Section \ref{secRSA}; these are needed throughout the paper.
Section \ref{secTSA} is devoted to toral subalgebras of a real semisimple
Lie algebra; for such a subalgebra, we study the two subsets of 
elements having only purely imaginary and only real eigenvalues,
respectively. In Section \ref{secGSLA} we consider real graded
semisimple Lie algebras; we recall some well-known
properties, and present two classes of examples which include many cases
of interest. In Section \ref{sec:list} we discuss our first main
algorithm, namely, an algorithm to list all regular $\Z$-graded semisimple subalgebras,
up to $G_0$-conjugacy. In Section \ref{secRSRS} we consider the
specialisation  to the trivial grading to obtain an algorithm for constructing 
the regular subalgebras of $\g$ up to $G$-conjugacy; we exemplify this
with the  real form EI of $E_6$.  The last three sections are devoted to the problem of classifying the nilpotent
orbits of a real graded semisimple Lie algebra; we follow Vinberg's approach 
and use carrier algebras. First, in Section \ref{secCA}, we generalise
some of Vinberg's constructions to the real case; based on our
algorithm in Section \ref{sec:list}, we obtain an algorithm
to list the real carrier algebras in $\g$, up to $G_0$-conjugacy. 
In Section \ref{secNO} we discuss what needs to be done to get the 
classification of the nilpotent orbits from that list of carrier
algebras.  Finally, Section \ref{secex} reports on example computations; in three examples we elaborate on how our methods
behave in practice: We consider the 3-vectors in dimension 8 (as in \cite{djoTri}), an example from
the physics literature (as in \cite{G2phys}), and the real orbits of
$\mathrm{Spin}_{14}(\R)\times \R^*$ on the $64$-dimensional spinor representation
(in the complex case this representation has, for instance, been considered by
\cite{gattivin}).
On some occasions we report on the runtimes of our implementations.

\subsection{Notation}\label{sec:notation}

We use standard notation and terminology for Lie algebras, which, for
instance, can be found
in the books of Humphreys \cite{hum} and Onishchik
\cite{onishchik}. All Lie algebras are denoted by fraktur symbols, for
example, $\g$, and their multiplication is denoted by a Lie bracket
$[-,-]\colon \g\times\g\to \g$; all considered Lie algebras are
finite-dimensional.  If $\varphi\colon \g\to \mathfrak{gl}(V)$ is a representation with $V$ a finite-dimensional vector space, then the associated trace form is $(x,y)=\tr(\varphi(x)\circ\varphi(y))$. The adjoint representation $\ad_\g$ is defined by $\ad_\g(x)(y)=[x,y]$; its trace form is the Killing form $\kappa_\g(x,y)=\tr(\ad_\g(x)\circ\ad_\g(y))$;  if the Lie algebra  follows from the context, then we simply write $\ad$ and $\kappa$. 

Let $\fv\subseteq \g$ be a subspace  and let  $\fa\leq \g$ be a subalgebra. The normaliser and centraliser of $\fv$ in $\fa$ are
\[\fn_{\fa}(\fv) = \{ x \in \fa \mid [x,\fv] \subseteq \fv\}\quad\text{and}\quad\fz_{\fa}(\fv) = 
\{ x \in \fa \mid [x,\fv]=0\},\]respectively. A real form of a complex Lie algebra $\g^c$ is a real subalgebra $\g\leq \g^c$ with $\g^c\cong \g\otimes_\R \C$, that is, $\g^c=\g\oplus\imath \g$ as real vector spaces; here $\imath\in\C$ is the imaginary unit. The real forms of the simple complex Lie algebras are classified; we use the standard notation of \cite[\S C.3 \& C.4]{knapp02}, cf.\ \cite[Table 5]{onishchik}.

As already done above, we endow symbols denoting algebraic structures  over the complex numbers by a superscript $c$. If this superscript is absent, then, unless otherwise noted, the structure is defined over the reals.  Similarly, if $\fv$ is a real vector space, then  $\fv^c = \fv \otimes_\R \C$  is its complexification. 

If $\g^c$ is a complex semisimple Lie algebra, then its adjoint group $G^c$ is the connected Lie subgroup of the automorphism group $\Aut(\g^c)$ with Lie algebra $\ad \g^c$; it is the group of inner automorphisms of $\g^c$, generated by  all $\exp( \ad x)$ with $x\in \g^c$.  Similarly, the adjoint group $G$ of  a real semisimple Lie algebra $\g$ is the connected Lie subgroup of $\Aut(\g)$ with Lie algebra $\ad \g$; it is generated by all $\exp(\ad x)$ with $x\in \g$, see for example \cite[p.\ 126--127]{helgasson}. If $\g^c =  \g\otimes_\R \C$, then $G = G^c(\R)$ is the normaliser of $\g$ in $G^c$, that is, $G=\{g\in G^c\mid g(\g)=\g\}$ is the set of real points of $G^c$.

\section{Semisimple Lie algebras}\label{secSS}
\noindent In this preliminary section we recall some notions concerning semisimple Lie algebras; throughout, $\g^c$ is a semisimple Lie algebra defined over $\C$.

\subsection{Semisimple  complex Lie algebras}\label{subsec:pre1} 
Let $\h^c\leq \g^c$ be a Cartan subalgebra with corresponding root system  $\Phi$. For a chosen root order denote by $\Delta=\{\alpha_1,\ldots,\alpha_\ell\}$ the associated basis of simple roots. The root space corresponding to  $\alpha\in \Phi$ is $\g^c_\alpha=\{g\in\g^c\mid \forall h\in\h^c\colon [h,g]=\alpha(h)g\}$. For $\alpha,\beta\in \Phi$ define $\my{\beta}{\alpha}=r-q$ where $r$ and $q$ are the largest integers such that $\beta-r\alpha$ and $\beta +q\alpha$ lie in $\Phi$. By \cite[\S 25.2]{hum}, there is a \myem{Chevalley basis} of $\g^c$, that is, a basis $\{h_1,\ldots,h_\ell,x_\alpha\mid \alpha\in\Phi\}$ which satisfies $h_i\in \h^c$, $x_\alpha\in \g^c_\alpha$, and 
\begin{equation}\label{eqCB}
\begin{aligned}{}
[h_i,h_j] &= 0,\qquad & [h_i,x_\alpha] &= \my{\alpha}{\alpha_i}  x_\alpha,\\
[x_\alpha,x_{-\alpha}] &= h_\alpha, & [x_\alpha,x_\beta] &= N_{\alpha,\beta} x_{\alpha+\beta};
\end{aligned}
\end{equation}
here $h_\alpha$ is the unique element in $[\g^c_\alpha,\g^c_{-\alpha}]$ with
$[h_\alpha,x_\alpha]= 2x_\alpha$, and $N_{\alpha,\beta} = \pm (r+1)$  where $r$ is the largest integer with $\alpha-r\beta\in\Phi$.  Note that $h_{\alpha_i} = h_i$ for $1\leq i\leq \ell$, and $x_\gamma=0$  for  $\gamma\notin\Phi$. 

A generating set $\{g_i,x_i,y_i \mid i=1,\ldots,\ell\}$ of $\g^c$ is a \myem{canonical generating set} if it satisfies
\begin{equation*}
\begin{aligned}{}
[g_i,g_j]& = 0,\qquad & [g_i,x_j]& = \my{\alpha_j}{\alpha_i} x_j, \\
[x_i,y_j]& = \delta_{ij} g_i,&[g_i,y_j]& = -\my{\alpha_j}{\alpha_i} y_j,
\end{aligned}
\end{equation*}
with $\delta_{ij}$ the Kronecker delta. Sending one canonical generating set to another uniquely extends to an automorphism of $\g^c$, see  
\cite[Thm IV.3]{jac} or
\cite[(II.21) \& (II.22)]{onishchik}. 
Every Chevalley basis $\{h_1,\ldots,h_\ell,x_\alpha\mid \alpha\in\Phi\}$
contains the canonical generating set $\{h_i,x_i,y_i\mid i=1,\ldots,\ell\}$ with $x_i=x_{\alpha_i}$ and $y_i=x_{-\alpha_i}$.

\subsection{Real Forms}\label{subsec:preReal}
Let $\{h_1,\ldots,h_\ell,x_\alpha\mid\alpha\in\Phi\}$ be a Chevalley basis of $\g^c$.
It is well-known and straightforward to verify that the $\R$-span 
\[ \fu=\Span_\R(\{\imath h_1,\ldots, \imath h_\ell,  (x_\alpha-x_{-\alpha}),\imath(x_\alpha+x_{-\alpha})\mid \alpha\in\Phi^+\}).\]
is a \myem{compact form} of $\g^c$, that is, a real form of $\g^c$ with negative definite Killing form; such a form is unique up to conjugacy, see \cite[Cor.\ p.\ 25]{onishchik}. Using the decomposition $\g^c=\fu\oplus\imath \fu$, the associated \myem{real structure} (or conjugation with respect to $\fu$) is $\tau\colon \g^c\to \g^c$, $x+\imath y\mapsto x-\imath y$, where $x,y\in\fu$.

Let $\theta$ be an automorphism of $\g^c$ of order 2, commuting with 
$\tau$. Then $\theta(\fu)\subseteq \fu$, and we can decompose $\fu =
\fu_+ \oplus \fu_{-}$, where $\fu_\pm$  is the $\pm 1$-eigenspace of the restriction of $\theta$ to $\fu$. Now $\g = \fk\oplus \fp$ with $\fk = \fu_+$ and $\fp = \imath\fu_{-}$ is a real Lie algebra with $\g^c=\g\oplus\imath \g$, hence a real form of $\g^c$. The associated real structure (or conjugation) is $\sigma\colon \g^c\to\g^c$, $x+\imath y\mapsto x-\imath y$, where $x,y\in\g$; the maps $\sigma$, $\tau$, and $\theta$ pairwise commute and $\tau=\theta\circ\sigma$.  It is well-known that every real form of $\g^c$ can be constructed in this way, see \cite{onishchik}. The associated decomposition $\g = \fk\oplus \fp$ is a \myem{Cartan decomposition}; the restriction of $\theta$ to $\g$ is  a \myem{Cartan involution} of $\g$. Note that $\theta$ acts on $\fk$ and $\fp$ by multiplication with $1$ and $-1$, respectively. We note that a real form $\g=\fk\oplus\fp$ is compact if and only if  $\fp=\{0\}$.

\begin{lemma}\label{lemCD}
Let $\g=\fk\oplus\fp$ be as before, with Cartan involution
$\theta$. If $\fa\leq \g$ is a semisimple $\theta$-stable subalgebra,
then $\fa=(\fa\cap \fk)\oplus(\fa\cap\fp)$ is a Cartan decomposition
of $\fa$.
\end{lemma}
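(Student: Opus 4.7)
My approach is to show that the restriction $\theta|_\fa$ is a Cartan involution of $\fa$, so that the decomposition of $\fa$ into its $(\pm 1)$-eigenspaces under $\theta|_\fa$ is by definition a Cartan decomposition. Since $\theta$ is involutive and stabilises $\fa$, its restriction is an involutive automorphism of $\fa$ whose $(\pm 1)$-eigenspaces are precisely $\fa\cap\fk$ and $\fa\cap\fp$; thus $\fa=(\fa\cap\fk)\oplus(\fa\cap\fp)$ as vector spaces, and it remains only to verify that this decomposition is of Cartan type.

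The plan is to exhibit $\fu_\fa:=(\fa\cap\fk)\oplus\imath(\fa\cap\fp)$ as a compact real form of $\fa^c$, which, by the well-known correspondence between Cartan involutions and compact real forms (see for example \cite{onishchik}), is equivalent to $\theta|_\fa$ being a Cartan involution. Using the bracket relations $[\fk,\fk]\subseteq\fk$, $[\fk,\fp]\subseteq\fp$, $[\fp,\fp]\subseteq\fk$ together with the fact that $\fa$ is a subalgebra, one checks directly that $\fu_\fa$ is closed under the Lie bracket. Recalling $\fk=\fu_+$ and $\fp=\imath\fu_-$ from Section~\ref{subsec:preReal}, we have $\fa\cap\fk\subseteq\fu_+$ and $\imath(\fa\cap\fp)\subseteq\imath\fp=\fu_-$, so $\fu_\fa\subseteq\fu$. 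Combined with $\dim_\R\fu_\fa=\dim_\R\fa=\dim_\C\fa^c$ and $\fu_\fa\cap\imath\fu_\fa\subseteq\fu\cap\imath\fu=\{0\}$, this shows that $\fu_\fa$ is a real form of $\fa^c$.

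The delicate step, and the main obstacle, is to verify that this real form is compact, i.e.\ that the Killing form $\kappa_{\fa^c}$ of $\fa^c$ is negative definite on $\fu_\fa$: note that $\kappa_{\fa^c}$ is in general \emph{not} the restriction of $\kappa_{\g^c}$, so the negative definiteness of $\kappa_{\g^c}|_\fu$ does not directly transfer. My plan is to fix an $\ad(\fu)$-invariant positive-definite real inner product $\langle\cdot,\cdot\rangle$ on $\fu$, obtained by averaging any inner product over the compact Lie group with Lie algebra $\fu$. Then $\ad_\fu(x)$ is skew-symmetric with respect to $\langle\cdot,\cdot\rangle$ for every $x\in\fu$, and since $\fu_\fa$ is an invariant subspace for $\ad_\fu(x)$ whenever $x\in\fu_\fa$, the restriction $\ad_{\fu_\fa}(x)$ is skew-symmetric with respect to $\langle\cdot,\cdot\rangle|_{\fu_\fa}$. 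Consequently $\ad_{\fu_\fa}(x)^2$ is negative semidefinite, so $\kappa_{\fu_\fa}(x,x)=\tr\,\ad_{\fu_\fa}(x)^2\leq 0$, with equality forcing $\ad_{\fu_\fa}(x)=0$ and hence $x=0$, as $\fu_\fa$ has trivial centre (being a real form of the semisimple algebra $\fa^c$). Thus $\kappa_{\fu_\fa}$ is negative definite, $\fu_\fa$ is a compact real form of $\fa^c$, and $\theta|_\fa$ is a Cartan involution, completing the proof.
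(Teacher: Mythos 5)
Your proof is correct, and it reaches the same pivotal object as the paper -- the subalgebra $\fu_\fa=(\fa\cap \fk)\oplus\imath(\fa\cap\fp)$, which is exactly $\fa^c\cap\fu$ -- but it establishes the crucial compactness of $\fu_\fa$ by a genuinely different and more elementary route. The paper invokes the Karpelevich--Mostow extension theorem (\cite[Cor.\ 6.1]{onishchik}): a Cartan involution of $\fb=\fa^c\cap\fu$ extends to one of the compact form $\fu$, which must be trivial, forcing the $\fp$-part of $\fb$ to vanish. You instead prove directly that a semisimple real subalgebra of a compact Lie algebra is compact, by restricting an $\ad(\fu)$-invariant inner product to the invariant subspace $\fu_\fa$, observing that each $\ad_{\fu_\fa}(x)$ is then skew-symmetric, and concluding that $\kappa_{\fu_\fa}(x,x)=\tr\,\ad_{\fu_\fa}(x)^2\le 0$ with equality only for $x=0$ since $\fu_\fa$ has trivial centre. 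Your version is self-contained and avoids a nontrivial black box, at the cost of having to verify by hand that $\fu_\fa$ is a bracket-closed real form of $\fa^c$ (which you do correctly via the relations $[\fk,\fk]\subseteq\fk$, $[\fk,\fp]\subseteq\fp$, $[\fp,\fp]\subseteq\fk$ and a dimension count); the paper's version is shorter but leans on a deeper structural theorem. One small simplification available to you: rather than averaging over the compact group, you could take $-\kappa_{\g^c}|_{\fu\times\fu}$ itself as the invariant inner product, since invariance of the Killing form gives the required skew-symmetry immediately. You are also right to flag, and correctly handle, the point that $\kappa_{\fa^c}$ need not be the restriction of $\kappa_{\g^c}$ -- this is precisely where a naive argument would fail.
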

\begin{proof}
Write $\theta=\sigma\circ\tau$ where $\sigma$ is the complex conjugation associated with $\g$, and $\tau$ is the compact real structure corresponding to the compact real form $\fu$ of $\g^c$. Recall that $\fk=\fu_+$ and $\fp=\imath\fu_-$ where $\fu_\pm$ is the $\pm1$-eigenspace of $\theta|_\fu$. Since $\fa$ is stable under $\sigma$ and $\theta$, we know $\tau(\fa)=\fa$. Write $\g^c=\fu\oplus \imath\fu$, so that $\fa^c=\fb\oplus \imath\fb$ with $\fb=\fa^c\cap \fu$. In particular, $\fb$ is a  real form of $\fa^c$ with real structure $\tau|_{\fa^c}$. By a theorem of Karpelevich-Mostow (see \cite[Cor.\ 6.1]{onishchik}), every Cartan involution of $\fb$ extends to a Cartan involution of $\fu$. In particular, $\fb$ is in fact a compact real form of $\fa^c$. (Since $\fu$ is a compact real form, the Cartan decomposition of $\fb$ must have a trivial '$\fp$-part', hence $\fb$ is compact as well.)  Clearly, $\theta|_{\fa^c}$ is an automorphism of $\fa^c$ commuting with $\tau|_{\fa^c}$, and $\fb=\fb_+\oplus\fb_-$ where $\fb_\pm$ is the $\pm1$-eigenspace of $\theta|_{\fb}$. Now $\fb_+\oplus \imath \fb_-$ is a real form of $\fa^c$ with Cartan involution $\theta|_{\fa^c}$. Note that $\fb_+=(\fa^c\cap\fu)\cap \fu_+=(\fa^c\cap\fk)=\fa\cap \fk$ and $\fb_-=(\fa^c\cap\fu)\cap \fu_-= \imath(\fa\cap\fp)$, which proves the assertion.
\end{proof}

\subsection{Cartan subalgebras}\label{sec:csa}

Let $\g$ be a real semisimple Lie algebra with adjoint group $G$. Let $\g=\fk\oplus\fp$ be a Cartan decomposition with associated Cartan involution $\theta$. By \cite[Prop.\ 6.59]{knapp02}, every Cartan subalgebra of $\g$ is $G$-conjugate to a 
$\theta$-stable Cartan subalgebra. Moreover, Kostant \cite{kostant} and Sugiura \cite{sugiura}
(using independent methods) have shown that, up to $G$-conjugacy,
there are a finite number of Cartan subalgebras in $\g$. We described in 
\cite{dfg}  how the methods of Sugiura yield an algorithm for constructing, up to $G$-conjugacy, all  $\theta$-stable Cartan subalgebras of $\g$. This algorithm has been implemented in our 
software package {\sf CoReLG} \cite{corelg} for the computer algebra
system {\sf GAP} \cite{gap}. 
 
Let $\Phi$ be the root system of $\g^c$ with respect to $\h^c$, where $\h$ is 
as above. Define
\begin{eqnarray*}N_{G^c}(\h^c) &=& \{ g \in G^c \mid g(\h^c)\subseteq \h^c\},\\
Z_{G^c}(\h^c)& =& \{ g \in G^c \mid g(h)=h \text{ for all } h\in \h^c\}.
\end{eqnarray*}
Let $W$ be the Weyl group of $\Phi$, and view $\Phi$ as  subset of the dual space $(\h^c)^\ast$. For $g\in N_{G^c}(\h^c)$ and $\alpha\in \Phi$ define
$\alpha^{g} = \alpha\circ g^{-1}$; using this definition,  $g(\g_\alpha^c) = \g_{\alpha^{g}}^c$, in particular, $\alpha^{g}\in \Phi$. Hence, every $g\in N_{G^c}(\h^c)$ yields a  map \[\psi_g\colon \Phi\to \Phi,\quad \alpha\mapsto \alpha^{g}.\] 
If $g,h\in N_{G^c}(\h^c)$ then $\psi_{g\circ h}$ maps $\alpha$ to $\alpha\circ h^{-1}\circ g^{-1}$, thus $\psi_{g\circ h}=\psi_g\circ \psi_h$.
The next theorem is \cite[Thm 30.6.5]{tauvelyu}; it allows us to define an action of $W$ on $\h^c$.

\begin{theorem}\label{thm:W}
If $g\in N_{G^c}(\h^c)$, then $\psi_g\in W$. The map $N_{G^c}(\h^c)\to W$, $g
\mapsto \psi_g$ is a surjective group homomorphism with
kernel $Z_{G^c}(\h^c)$. In particular, $W\cong 
N_{G^c}(\h^c)/Z_{G^c}(\h^c)$.
\end{theorem}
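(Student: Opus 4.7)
The plan is to verify the four assertions --- homomorphism property, kernel description, surjectivity, and finally the membership $\psi_g\in W$ --- in a convenient order. The homomorphism property $\psi_{gh}=\psi_g\circ\psi_h$ has in effect already been checked in the paragraph preceding the statement. For the kernel, I would observe that if $\psi_g=\mathrm{id}$, then $\alpha\circ g^{-1}=\alpha$ for every $\alpha\in\Phi$, so $g^{-1}$ fixes every element of the $\C$-span of $\Phi$ in $(\h^c)^\ast$. Semisimplicity of $\g^c$ ensures that $\Phi$ spans $(\h^c)^\ast$, so $g$ fixes every $h\in\h^c$ and thus lies in $Z_{G^c}(\h^c)$; the reverse inclusion is immediate from the definitions.

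For surjectivity I would exhibit an explicit preimage for each simple reflection. Fixing a Chevalley basis as in \eqref{eqCB} and, for each simple root $\alpha_i$, setting $n_i=\exp(\ad x_i)\exp(-\ad y_i)\exp(\ad x_i)\in G^c$, a standard $\SL_2$-computation on the triple $(h_i,x_i,y_i)$ shows that $n_i\in N_{G^c}(\h^c)$ and that $\psi_{n_i}$ acts on $(\h^c)^\ast$ as the simple reflection $s_{\alpha_i}$. Since $W$ is generated by the $s_{\alpha_i}$, surjectivity follows.

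The remaining task is to prove $\psi_g\in W$ for every $g\in N_{G^c}(\h^c)$, and this is the main obstacle. The plan is as follows: since $g\in\Aut(\g^c)$ preserves both $\h^c$ and the Killing form, $\psi_g$ is an orthogonal automorphism of the root system $\Phi$, so $\Delta'=\psi_g(\Delta)$ is another base of $\Phi$. By the transitivity of $W$ on bases there exists $w\in W$ with $w(\Delta')=\Delta$, and by the surjectivity just established $w=\psi_h$ for some $h\in N_{G^c}(\h^c)$; hence $\psi_{hg}$ stabilises $\Delta$ setwise and induces a Dynkin diagram automorphism $\varphi$.

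The hard step is to show that $\varphi$ must be trivial, i.e.\ that no non-trivial diagram automorphism of $\g^c$ can be realised by an element of $G^c$. To do this, I would adjust $hg$ by a suitable element of $Z_{G^c}(\h^c)$ so that the resulting inner automorphism sends the canonical generating set $\{h_i,x_i,y_i\}$ to $\{h_{\varphi(i)},x_{\varphi(i)},y_{\varphi(i)}\}$; by the uniqueness of automorphisms sending one canonical generating set to another (recalled in \S\ref{subsec:pre1}), this element must coincide with the diagram-lift $\tilde\varphi\in\Aut(\g^c)$. But a non-trivial $\tilde\varphi$ is outer on each simple component of $\g^c$ and therefore cannot lie in $G^c$, forcing $\varphi=\mathrm{id}$. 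Thus $\psi_{hg}=\mathrm{id}$ by the kernel description, so $\psi_g=w^{-1}\in W$, and the isomorphism $W\cong N_{G^c}(\h^c)/Z_{G^c}(\h^c)$ follows from the first isomorphism theorem applied to the epimorphism $g\mapsto\psi_g$ with kernel $Z_{G^c}(\h^c)$.
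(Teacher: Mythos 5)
Your proof is correct, but note first that the paper does not actually prove this statement: it quotes it directly as \cite[Thm 30.6.5]{tauvelyu}, so there is no internal argument to compare against, and what you have written is the standard self-contained Lie-algebra proof. The homomorphism property, the kernel computation (using that $\Phi$ spans $(\h^c)^\ast$ by semisimplicity of $\g^c$), and the realisation of each simple reflection by $n_i=\exp(\ad x_i)\exp(-\ad y_i)\exp(\ad x_i)$ are all routine and correctly handled. The substance of the theorem sits exactly where you put it: after normalising by a Weyl element so that $\psi_{hg}$ stabilises $\Delta$, one must rule out a nontrivial induced diagram automorphism. Your rescaling of $hg$ by a torus element of $Z_{G^c}(\h^c)$ is legitimate (every tuple $(\mu_1,\ldots,\mu_\ell)\in(\C^*)^\ell$ arises as $(e^{\alpha_1(h)},\ldots,e^{\alpha_\ell(h)})$ for some $h\in\h^c$, so the required element is $\exp(\ad h)\in G^c$), and the identification with the canonical lift $\tilde\varphi$ then follows from the uniqueness statement recalled in \S\ref{subsec:pre1}. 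The one ingredient you import wholesale --- that a nontrivial diagram automorphism is never inner --- is genuinely the hard core of the theorem, and you should source it with care to avoid circularity, since some textbook derivations of $\Aut(\g^c)/G^c\cong\Aut(\text{diagram})$ themselves pass through $N_{G^c}(\h^c)/Z_{G^c}(\h^c)\cong W$. A safe independent justification: an inner automorphism $\exp(\ad x)$ lifts to conjugation by $\exp(\rho(x))$ in any representation $\rho$, hence fixes every isomorphism class of irreducible modules, whereas a nontrivial $\tilde\varphi$ permutes the fundamental weights, and hence the fundamental representations, nontrivially (and permutes the simple ideals if $\g^c$ is not simple); see \cite[Ch.\ IX]{jac}. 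Two cosmetic points: ``outer on each simple component'' should read ``outer on, or nontrivially permuting, the simple components''; and in identifying $\psi_{hg}|_\Delta$ with a diagram automorphism you only need that $\psi_g$ preserves the Cartan integers (which it does, being induced by a Lie algebra automorphism preserving root strings) --- orthogonality is not essential.
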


\begin{lemma}\label{lem:wh}
If $w\in W$, then $w(h_\alpha) = h_{w(\alpha)}$ for all $\alpha\in \Phi$.
\end{lemma}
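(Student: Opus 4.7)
The plan is to lift $w \in W$ to an element $g \in N_{G^c}(\h^c)$ via Theorem \ref{thm:W}, show that $g(h_\alpha)$ must be a scalar multiple of $h_{w(\alpha)}$, and then pin down that the scalar is $1$ using the normalisation $[h_\alpha,x_\alpha]=2x_\alpha$. Since $Z_{G^c}(\h^c)$ acts trivially on $\h^c$, the resulting element $g(h_\alpha)$ depends only on $w$, so this determines the $W$-action on coroots.

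First I would use Theorem \ref{thm:W} to pick any $g \in N_{G^c}(\h^c)$ with $\psi_g = w$; by definition of $\psi_g$, we have $g(\g^c_\alpha) = \g^c_{w(\alpha)}$ and $g(\g^c_{-\alpha}) = \g^c_{-w(\alpha)}$. Since $g$ is an algebra automorphism it maps the bracket $[\g^c_\alpha,\g^c_{-\alpha}]$ onto $[\g^c_{w(\alpha)},\g^c_{-w(\alpha)}]$, which is the one-dimensional subspace $\C h_{w(\alpha)}$. Hence $g(h_\alpha) = c\, h_{w(\alpha)}$ for some $c \in \C$.

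To determine $c$, write $g(x_\alpha) = \lambda x_{w(\alpha)}$ for some nonzero $\lambda$ (which exists because $g(\g^c_\alpha)=\g^c_{w(\alpha)}$ is one-dimensional and spanned by $x_{w(\alpha)}$). Applying $g$ to the Chevalley relation $[h_\alpha, x_\alpha] = 2 x_\alpha$ yields
\[
[g(h_\alpha), g(x_\alpha)] = 2\, g(x_\alpha),
\]
and substituting $g(h_\alpha) = c\, h_{w(\alpha)}$ and $g(x_\alpha) = \lambda x_{w(\alpha)}$ gives $c\lambda\,[h_{w(\alpha)}, x_{w(\alpha)}] = 2\lambda\, x_{w(\alpha)}$. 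Using $[h_{w(\alpha)},x_{w(\alpha)}] = 2 x_{w(\alpha)}$ and cancelling the nonzero scalar $\lambda$, we obtain $2c = 2$, so $c = 1$. Therefore $g(h_\alpha) = h_{w(\alpha)}$.

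Finally, I would invoke Theorem \ref{thm:W} once more: any two lifts of $w$ differ by an element of $Z_{G^c}(\h^c)$, which fixes $\h^c$ pointwise; hence $g(h_\alpha)$ is independent of the chosen lift and coincides with the value $w(h_\alpha)$ of the induced $W$-action on $\h^c$. The main (mild) subtlety is ensuring the scalar $c$ is genuinely $1$ rather than merely a root of unity; this is precisely where the normalisation $[h_\alpha,x_\alpha]=2x_\alpha$ built into the definition of $h_\alpha$ does the work.
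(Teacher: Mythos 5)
Your proof is correct and follows essentially the same route as the paper: lift $w$ to some $g\in N_{G^c}(\h^c)$ via Theorem \ref{thm:W}, observe that $g(h_\alpha)$ is a scalar multiple of $h_{w(\alpha)}$, and use the normalisation $[h_\alpha,x_\alpha]=2x_\alpha$ from the Chevalley relations to force that scalar to be $1$. The paper phrases this by writing $g(h_\alpha)=\lambda_\alpha\lambda_{-\alpha}h_{w(\alpha)}$ and showing $\lambda_{-\alpha}=\lambda_\alpha^{-1}$, which is the same computation.
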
  

\begin{proof}
Theorem \ref{thm:W} shows that $w=\psi_g$ for some $g\in N_{G^c}(\h^c)$, 
and the action of $w$ on $\h^c$ is defined as $w(h)=g(h)$. If $\alpha\in\Phi$, 
then $g(x_\alpha)\in \g_{w(\alpha)}^c$, hence $g(x_\alpha)=\lambda_\alpha x_{w(\alpha)}$ 
for some $\lambda_\alpha\in \C$. It  follows from \eqref{eqCB} that 
$g(h_\alpha)=\lambda_{\alpha}\lambda_{-\alpha}h_{w(\alpha)}$ and  
$\lambda_{-\alpha}=\lambda_{\alpha}^{-1}$, hence  
$w(h_\alpha)=g(h_\alpha)=h_{w(\alpha)}$.
\end{proof}

\section{Computing the real Weyl group}\label{sec:realweyl} 
\noindent  Let $\g=\fk\oplus\fp$ be as in the previous section, with
Cartan involution $\theta=\tau\circ \sigma$ and $\theta$-stable Cartan
subalgebra $\h$; recall that $\tau$ is a compact real structure. Let
$\Phi$ and $W$ be the root system and Weyl group associated with
$\h^c$; let $\{\alpha_1,\ldots,\alpha_\ell\}$ be a basis of simple
roots and let $\{h_1,\ldots,h_\ell,x_\alpha\mid \alpha\in\Phi\}$ be a
Chevalley basis of $\g^c$. Recall the definition of
$h_\alpha=[x_\alpha,x_{-\alpha}]$. We define $N_{G}(\h)$ and $Z_G(\h)$
as in the complex case, and the \myem{real Weyl group} of $\g$
relative to $\h$ as \[W(\h) = N_G(\h)/Z_G(\h),\] see
\cite[(7.92a)]{knapp02}. It follows from  \cite[(7.93)]{knapp02}
that \[W(\h)\leq W.\]  An algorithm for finding generators of $W(\h)$,
based on \cite[Prop.\ 12.14]{adclou}, is implemented in the {\sc Atlas} software 
\cite{atlasLG}. Here we describe a similar, but also more direct algorithm; it is based 
on the following theorem (see \cite[Prop.\ 5.1]{adams} for a very similar statement). 

\begin{theorem}\label{thm:rW} The real Weyl group is $W(\h)=\{w\in W\mid \exists g\in N_{G^c}(\h^c)\colon g\circ\theta=\theta\circ g\text{ and }w=g|_{\h^c}\}$.
\end{theorem}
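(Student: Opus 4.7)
The proof will establish the two inclusions separately, using the structure of the Cartan torus $H^c := Z_{G^c}(\h^c) = \exp(\h^c) = \exp(\h_\fk^c)\cdot\exp(\h_\fp^c)$ (where $\h_\fk = \h \cap \fk$ and $\h_\fp = \h \cap \fp$) and the three commuting involutions $\sigma$, $\tau$, $\theta$.

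A preliminary observation drives both inclusions: if $w \in W$ admits a lift in $N_{G^c}(\h^c)$ that commutes with either $\sigma$ or $\theta$, then $w$ preserves the decomposition $\h^c = \h_\fk^c \oplus \h_\fp^c$. Indeed, an element $h \in \h$ lies in $\h_\fk$ iff all $\ad$-eigenvalues $\alpha(h)$ are purely imaginary, and in $\h_\fp$ iff all such eigenvalues are real; both conditions are preserved by any automorphism of $\g^c$. Consequently $w$ commutes with $\theta|_{\h^c}$ in either direction.

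For $(\subseteq)$, take $n \in N_G(\h)$ representing $w$. The conjugate $\theta' := n\theta n^{-1}$ is a Cartan involution of $\g$ which, by the preliminary observation, agrees with $\theta$ on $\h^c$. Hence $h := \theta'\theta$ is an automorphism of $\g^c$ fixing $\h^c$ pointwise, so $h \in H^c$; the involutivity of $\theta$ and $\theta'$ forces $\theta(h) = h^{-1}$. Choosing $u \in \h_\fp^c$ with $\exp(u) = h$ and setting $k := \exp(u/2) \in H^c$, a direct computation yields $k\theta(k)^{-1} = h$, so that $g := k^{-1}n$ commutes with $\theta$ and satisfies $g|_{\h^c} = w$ because $k \in H^c$.

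For $(\supseteq)$, let $g \in N_{G^c}(\h^c)$ commute with $\theta$ with $g|_{\h^c} = w$. The preliminary observation gives $w(\h) = \h$, so $w$ commutes with $\sigma|_{\h^c}$; hence $c := g^{-1}\sigma(g)$ restricts to the identity on $\h^c$ and lies in $H^c$. A short computation using $[\sigma,\theta]=1$ and $\theta g = g\theta$ shows $\theta(c) = c$ and $c\sigma(c) = 1$; together these force $c \in \exp(\imath\h_\fk)$, on which $\sigma$ acts by inversion. Writing $c = \exp(\imath v)$ with $v \in \h_\fk$ and setting $z := \exp(\imath v/2) \in H^c$ gives $\sigma(z)z^{-1} = c^{-1}$, so $n := gz \in N_{G^c}(\h^c)$ is $\sigma$-fixed, hence lies in $N_G(\h)$ and represents $w$. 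The main technical step in both directions is the square-root extraction in the relevant subtorus; this is routine modulo divisibility of $\exp(\h_\fp^c)$ and $\exp(\imath\h_\fk)$, provided one is careful to avoid potential 2-torsion contributions by exploiting that the characters $\alpha \mapsto \lambda_\alpha$ describing $h$ (resp.\ $c$) on root spaces satisfy $\lambda_{\theta\alpha} = \lambda_\alpha^{-1}$, which places the cocycle into the correct connected subgroup.
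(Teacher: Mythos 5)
Your reduction of both inclusions to a square-root extraction in the torus $H^c=Z_{G^c}(\h^c)$ is a coherent strategy, but the extraction itself is exactly where the content of the theorem sits, and your justification for it does not work. In each direction you produce an element of $H^c$ (namely $h=\theta'\theta$, resp.\ $c=g^{-1}\sigma(g)$) lying in the closed subgroup of $H^c$ cut out by the conditions ``fixed/inverted by $\theta$ and $\sigma$'', and you then write it as $\exp$ of an element of the Lie algebra of that subgroup ($\h_\fp^c$, resp.\ $\imath\h_\fk$). But these subgroups are in general \emph{disconnected}: for $\g=\SL_2(\R)$ with $\h$ the compact Cartan subalgebra one has $\{t\in H^c\mid \theta(t)=t,\ \sigma(t)=t^{-1}\}\cong\R^*$ while $\exp(\ad(\imath\h_\fk))\cong\R_{>0}$. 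Only elements of the identity component are coboundaries $k\,\theta(k)^{-1}$ (resp.\ $z\,\sigma(z)^{-1}$) with $k,z\in H^c$; the other components represent nontrivial $2$-torsion classes for which your equations are unsolvable. The criterion you offer to exclude this --- that the scalars $\lambda_\alpha$ by which the element acts on root spaces satisfy $\lambda_{\alpha\circ\theta}=\lambda_\alpha^{-1}$ --- is merely a restatement of membership in the full, possibly disconnected subgroup and holds on every component, so it decides nothing. This is not a pedantic point: the reason $W(\h)$ can be a proper subgroup of $W^\theta$ is precisely that such $2$-torsion obstructions are generically nonzero, so any proof must identify what kills them in this situation.

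What is missing is positivity. In the ``$\subseteq$'' direction, $h=\theta'\theta$ is a product of two Cartan involutions of the same real form and is therefore self-adjoint and positive definite for the inner product $B_\theta(x,y)=-\kappa(x,\theta y)$; hence every $\lambda_\alpha$ is a positive real, $h=\exp(\ad u)$ with $\alpha(u)=\log\lambda_\alpha\in\R$ for all $\alpha$, and only \emph{then} does the relation $\lambda_{\alpha\circ\theta}=\lambda_\alpha^{-1}$ give $\theta(u)=-u$, i.e.\ $u\in\h_\fp^c$. In the ``$\supseteq$'' direction, $g\circ\theta=\theta\circ g$ gives $\sigma(g)=\tau(g)$, so $c=g^{-1}\tau(g)=(g^*g)^{-1}$ where $*$ denotes the adjoint for the positive definite Hermitian form $B_\tau(x,y)=-\kappa(x,\tau y)$; again $c$ is positive definite and the same logarithm argument places it in $\exp(\ad(\imath\h_\fk))$. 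With these additions your argument closes and becomes a genuinely different, more torus-cohomological route than the paper's, which handles ``$\subseteq$'' by quoting $W(\h)=N_K(\h)/Z_K(\h)$ (so that a representative in the maximal compact subgroup commutes with $\theta$ for free) and ``$\supseteq$'' by conjugating the compact structure $\tau'=g\tau g^{-1}$ back to $\tau$ with the canonical positive fourth root $\varphi^{-1/4}$ of $(\tau'\tau)^2$. As written, however, the proposal has a genuine gap at its central step in both directions.
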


\begin{proof}
First, we prove ``$\subseteq$''; let $w\in W(\h)$. If $K$ is the connected Lie subgroup of $G$ with Lie algebra
$\fk$, then $W(\h) = N_K(\h)/Z_K(\h)$ by \cite[(7.92b)]{knapp02}. Thus, there
is $g\in N_K(\h)$ whose restriction to $\h^c$ coincides with $w$. Clearly,
all elements of $K$ commute with $\theta$.

Second, we prove ``$\supseteq$''; let $w\in W$ such that $w=g|_{\h^c}$ for some $g\in N_{G^c}(\h^c)$ with $g\circ\theta=\theta\circ g$.
Consider the compact structures $\tau$ and $\tau' = g\circ\tau\circ g^{-1}$. The corollary to \cite[Prop.\ 3.6]{onishchik} shows that $\tau = \eta\circ \tau'\circ \eta^{-1}$ for some $\eta\in G^c$; in particular, one can choose $\eta = \varphi^{-1/4}$, where $\varphi = (\tau'\circ\tau)^2$ and $\varphi^t=\exp(t\log \varphi)$, $t\in\R$, is a 1-parameter subgroup, see \cite[p.\ 23]{onishchik}. Since $\tau'\circ \tau$ commutes with $\theta$, so do $\varphi$ and $\eta$; the latter follows from the fact that $\varphi$ and $\varphi^t$ have the same eigenvectors, see  \cite[p.\ 23]{onishchik}.

If $\alpha\in\Phi$, then $\theta(h_\alpha) = h_{\alpha\circ\theta}$ and $\sigma(h_\alpha) = h_{-\alpha\circ\theta}$, see \cite[Lem.\ 6]{dfg}, hence  $\tau(h_\alpha) = h_{-\alpha} = -h_\alpha$. By Lemma \ref{lem:wh} we have $g(h_\alpha) = h_{w(\alpha)}$,
implying that $\tau'\circ\tau (h_\alpha) = h_\alpha$. Since $\{h_1,\ldots,h_\ell\}$
with $h_i=h_{\alpha_i}$ is a basis for $\h^c$, we get $\tau'\circ\tau\in Z_{G^c}(\h^c)$, thus $\eta\in Z_{G^c}(\h^c)$.

Now define $\tilde{g} = \eta \circ g\in G^c$, so that $\tilde{g}$ commutes with $\theta$ and with $\tau$; for the latter note that $\tau =
\eta\circ \tau'\circ \eta^{-1} = \eta\circ g\circ\tau\circ g^{-1}\circ \eta^{-1} = \tilde{g}\circ\tau\circ \tilde{g}^{-1}$.
In particular, $\tilde{g}$ commutes with $\sigma=\theta\circ\tau$, which proves $\tilde g(\g)=\g$. Thus, $\tilde g\in G^c(\R)=G$.  Now $\eta\in Z_{G^c}(\h^c)$ implies that $\tilde g\in N_{G}(\h)$ and that the restriction of $\tilde{g}$ to $\h^c$ coincides with the restriction $g|_{\h^c}$, hence with $w\in W$ by the definition of $g$. This proves $w\in W(\h)$.
\end{proof}

If $w\in W$, then $w=\psi_g$ for some $g\in N_{G^c}(\h^c)$, see Theorem \ref{thm:W}, and $w$ acts on $\h^c$ as $g$. Let \[W^\theta=\{w\in W\mid \text{the action of $w$ on $\h^c$ commutes with the restriction  $\theta|_{\h^c}$}\}.\] Theorem \ref{thm:rW} yields $W(\h)\leq W^\theta$. We now consider $w\in W(\h)$ and show how to construct $g\in N_{G^c}(\h^c)$ with $w=\psi_g$. Let $\{x_i,y_i,h_i\mid i=1,\ldots,\ell\}$ be the canonical generating set contained in the Chevalley basis of $\g^c$. Clearly,  $\{x_{w(\alpha_i)},x_{-w(\alpha_i)},h_{w(\alpha_i)}\mid i=1,\ldots,\ell\}$ is also a canonical generating set, and mapping $(x_i,y_i,h_i)$ to $(x_{w(\alpha_i)},x_{-w(\alpha_i)},h_{w(\alpha_i)})$ for all $i$ extends uniquely to an automorphism
\[\eta_w\colon\g^c\to \g^c.\]
By Lemma \ref{lem:wh},
the actions of $\eta_w$ and $w$ on $\h^c$ coincide. 
Thus, $\eta_w^{-1}\circ g$ fixes $\h^c$ pointwise; such an automorphism is inner, cf.\ \cite[\S 2.3]{dfg}, hence $\eta_w$ is inner. 

If $z\in Z_{G^c}(\h)$, then $z(x_\alpha)$ is a multiple of $x_\alpha$; in particular, $z$ is determined by nonzero parameters $\lambda_1,\ldots,\lambda_\ell\in \C$ with $z(x_i) = \lambda_i x_i$ and $z(y_i) = \lambda_i^{-1}y_i$; conversely, for such parameters denote by \[\zeta_0(\lambda_1,\ldots,\lambda_\ell)\in Z_{G^c}(\h)\] the automorphism with $z(x_i) = \lambda_i x_i$, $z(y_i) = \lambda_i^{-1}y_i$, and $z(h_i)=h_i$ for all $i$. In conclusion, we have proved the following corollary.

\begin{corollary} The elements in $N_{G^c}(\h^c)$ whose restriction to $\h^c$ is $w\in W(\h)$ are exactly $\eta_w\circ \zeta_0(\lambda_1,\ldots,\lambda_\ell)$ with nonzero  $\lambda_1,\ldots,\lambda_\ell\in\C$.
\end{corollary}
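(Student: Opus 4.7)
The plan is to establish the two inclusions; most of the work has already been done in the discussion preceding the statement, so the proof is essentially a bookkeeping exercise.

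For the inclusion ``$\subseteq$'', take $g\in N_{G^c}(\h^c)$ with $g|_{\h^c}=w\in W(\h)$. By Lemma~\ref{lem:wh}, $\eta_w$ also restricts to $w$, so $\eta_w^{-1}\circ g$ fixes $\h^c$ pointwise and hence belongs to $Z_{G^c}(\h^c)$. The key supporting fact I would then verify is that every $z\in Z_{G^c}(\h^c)$ has the form $\zeta_0(\lambda_1,\ldots,\lambda_\ell)$: indeed, $z$ commutes with $\ad h$ for all $h\in\h^c$, so it preserves each one-dimensional root space, giving $z(x_i)=\lambda_i x_i$ and $z(y_i)=\mu_i y_i$ for nonzero $\lambda_i,\mu_i\in\C$. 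Applying $z$ to $[x_i,y_i]=h_i$ and using $z|_{\h^c}=\mathrm{id}$ forces $\mu_i=\lambda_i^{-1}$; since $\{x_i,y_i,h_i\mid i=1,\ldots,\ell\}$ generates $\g^c$ as a Lie algebra, $z$ is completely determined by the tuple $(\lambda_1,\ldots,\lambda_\ell)$, whence $z=\zeta_0(\lambda_1,\ldots,\lambda_\ell)$. This yields $g=\eta_w\circ\zeta_0(\lambda_1,\ldots,\lambda_\ell)$ as required.

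For the reverse inclusion, fix any nonzero $\lambda_1,\ldots,\lambda_\ell\in\C$. Then $\zeta_0(\lambda_1,\ldots,\lambda_\ell)\in Z_{G^c}(\h^c)\subseteq N_{G^c}(\h^c)$ and $\eta_w\in N_{G^c}(\h^c)$, so the composition $\eta_w\circ\zeta_0(\lambda_1,\ldots,\lambda_\ell)$ lies in $N_{G^c}(\h^c)$; restricted to $\h^c$ it acts as $w\circ\mathrm{id}=w$, again by Lemma~\ref{lem:wh}.

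The only potential obstacle is the existence half of the parametrisation of $Z_{G^c}(\h^c)$ --- namely, that for arbitrary nonzero $\lambda_i$ there really is an automorphism acting as prescribed (otherwise the notation $\zeta_0(\lambda_1,\ldots,\lambda_\ell)$ would be vacuous for some choices). This is used implicitly in the very definition of $\zeta_0$ in the text and is standard: the assignment $\alpha_i\mapsto \lambda_i$ extends uniquely to a multiplicative character $c\colon\Z\Phi\to\C^\times$ of the root lattice, and the scalings $x_\alpha\mapsto c(\alpha)x_\alpha$ together with the identity on $\h^c$ are compatible with the Chevalley structure constants in~\eqref{eqCB}, since $[x_\alpha,x_\beta]=N_{\alpha,\beta}x_{\alpha+\beta}$ is respected by $c(\alpha)c(\beta)=c(\alpha+\beta)$ and $[x_\alpha,x_{-\alpha}]=h_\alpha$ is respected by $c(\alpha)c(-\alpha)=1$.
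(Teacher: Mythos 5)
Your proof is correct and follows essentially the same route as the paper, whose argument is precisely the discussion preceding the corollary: factor $g$ as $\eta_w\circ(\eta_w^{-1}\circ g)$, observe that $\eta_w^{-1}\circ g$ fixes $\h^c$ pointwise and is therefore one of the $\zeta_0(\lambda_1,\ldots,\lambda_\ell)$, and conversely note that each $\eta_w\circ\zeta_0(\lambda_1,\ldots,\lambda_\ell)$ normalises $\h^c$ and restricts to $w$ there. Your additional check that $\zeta_0(\lambda_1,\ldots,\lambda_\ell)$ exists for arbitrary nonzero $\lambda_i$ (compatibility with the Chevalley structure constants) is a detail the paper takes for granted, and is a worthwhile inclusion.
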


For each $\alpha\in\Phi$ define scalars $\mu_\alpha,\nu_\alpha\in\C$ by 
\begin{equation}\label{eqsc} \theta(x_\alpha) = \mu_\alpha x_{\alpha\circ\theta}\quad\text{and}\quad \eta_w(x_\alpha) = \nu_\alpha x_{w(\alpha)}.
\end{equation} 
Observe that $\mu_{-\alpha} = \mu_\alpha^{-1}$ and $\nu_{-\alpha} = \nu_{\alpha}^{-1}$, and $\theta(h_\alpha)=h_{\alpha\circ\theta}$. For nonzero 
$\lambda_1,\ldots,\lambda_\ell\in\C$ write $\zeta_0 = 
\zeta_0(\lambda_1,\ldots,\lambda_\ell)$.
For $\alpha= \sum_{i=1}^\ell a_i \alpha_i$ define $\lambda_\alpha = 
\prod_{i=1}^\ell \lambda_i^{a_i}$ and $\text{ht}(\alpha)=\sum_{i=1}^\ell a_i$, the height of $\alpha$. An induction on the height shows that $\zeta_0(x_\alpha) = \lambda_\alpha x_\alpha$. Assume $w\in W^\theta$; we want to decide whether there exist nonzero $\lambda_i$ such that $\eta_w\circ\zeta_0\circ \theta = \theta\circ \eta_w\circ\zeta_0$.  Since $w$ and $\theta$ commute we get that
$\eta_w\circ \zeta_0 \circ\theta (h_i)= 
\theta \circ\eta_w\circ\zeta_0(h_i)$ whatever the $\lambda_i$ are. Secondly,
$\eta_w\circ \zeta_0\circ \theta(x_i) = \theta\circ \eta_w\circ\zeta_0(x_i)$ is equivalent to
\begin{equation}\label{eq:lam}
\lambda_{\alpha_i\circ\theta} \lambda_{\alpha_i}^{-1} = \nu_{\alpha_i}\nu_{\alpha_i\circ\theta}^{-1}
\mu_{\alpha_i}^{-1}\mu_{w(\alpha_i)}.
\end{equation}
Thirdly, $\eta_w\circ\zeta_0\circ \theta(y_i) = \theta \circ\eta_w\circ\zeta_0(y_i)$ is equivalent to \eqref{eq:lam}. In conclusion, the next proposition follows.

\begin{proposition}\label{propSE} 
Let $w\in W^\theta$. Then $w\in W(\h)$ if and only if there are nonzero 
$\lambda_1,\ldots,\lambda_\ell\in\C$ satisfying \eqref{eq:lam}  for all $i$.
\end{proposition}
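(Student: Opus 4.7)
The plan is to show this is essentially a direct consequence of the machinery assembled above: Theorem \ref{thm:rW} characterises $W(\h)$ as those $w\in W$ admitting a lift $g\in N_{G^c}(\h^c)$ that commutes with $\theta$, and the preceding Corollary parametrises all such lifts of a given $w$ as $\eta_w\circ\zeta_0(\lambda_1,\ldots,\lambda_\ell)$ with nonzero $\lambda_i$. So the task reduces to deciding when the $\lambda_i$ can be chosen so that $g := \eta_w\circ\zeta_0(\lambda_1,\ldots,\lambda_\ell)$ commutes with $\theta$.

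First I would observe that since both $g$ and $\theta$ are automorphisms of $\g^c$, the identity $g\circ\theta=\theta\circ g$ needs only to be checked on a generating set. The canonical generating set $\{h_i,x_i,y_i\mid i=1,\ldots,\ell\}$ from the Chevalley basis is the obvious choice. On the $h_i$ the identity is automatic: $\zeta_0$ fixes $\h^c$ pointwise, $\eta_w$ acts on $\h^c$ as $w$ by Lemma \ref{lem:wh}, and $w$ commutes with $\theta|_{\h^c}$ by the standing hypothesis $w\in W^\theta$. Hence this case imposes no constraint on the $\lambda_i$.

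Second, I would expand both sides of $g\circ\theta(x_i)=\theta\circ g(x_i)$ using the scalars defined in \eqref{eqsc} together with the relation $\zeta_0(x_\alpha)=\lambda_\alpha x_\alpha$ recorded just before the proposition. A short bookkeeping computation, using $\theta(x_i)=\mu_{\alpha_i} x_{\alpha_i\circ\theta}$ and $\eta_w(x_i)=\nu_{\alpha_i} x_{w(\alpha_i)}$, rewrites the equation as a single scalar identity in $\h^c$-weight spaces, which after cancellation becomes exactly \eqref{eq:lam}. The analogous calculation for $y_i$, using $\mu_{-\alpha}=\mu_{\alpha}^{-1}$ and $\nu_{-\alpha}=\nu_{\alpha}^{-1}$ as noted, produces the same condition \eqref{eq:lam} (this is why the text emphasises that both checks give the same equation).

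Combining the three cases, $g$ commutes with $\theta$ precisely when the $\lambda_i$ satisfy \eqref{eq:lam} for every $i\in\{1,\ldots,\ell\}$; together with Theorem \ref{thm:rW} and the Corollary this yields the stated equivalence. I do not foresee a genuine obstacle: the proof is really a matter of carefully tracking the scalars $\mu_\alpha,\nu_\alpha,\lambda_\alpha$ through the two compositions and confirming that the $y_i$-condition collapses to the $x_i$-condition, with the role of the hypothesis $w\in W^\theta$ being to make the $h_i$-check vacuous.
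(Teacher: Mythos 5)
Your proposal matches the paper's own argument: the paper likewise reduces the question, via Theorem \ref{thm:rW} and the parametrisation of lifts as $\eta_w\circ\zeta_0(\lambda_1,\ldots,\lambda_\ell)$, to checking $g\circ\theta=\theta\circ g$ on the canonical generators, with the $h_i$-check vacuous because $w\in W^\theta$ and the $x_i$- and $y_i$-checks each equivalent to \eqref{eq:lam}. The proof is correct and essentially identical to the one in the text.
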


The existence of a solution satisfying \eqref{eq:lam} can readily be checked using row Hermite normal forms, see  \cite[p.\ 322]{sims}. Note that the equations \eqref{eq:lam} are of the form $\lambda_1^{e_{i,1}}\cdots\lambda_\ell^{e_{i,\ell}} = 
u_i$; let $E=(e_{i,j})_{i,j}$ be the matrix of exponents. The left hand side of \eqref{eq:lam} does not depend on $w$, and $e_{i,1},\ldots,e_{i,\ell}$ can be computed from  $\alpha_i\circ\theta-\alpha_i = \sum_j e_{i,j} \alpha_j$, thus $E$ is determined readily. The right hand side of \eqref{eq:lam} does depend on $w$,
and regarding the computation of $u_i = \nu_{\alpha_i}\nu_{\alpha_i\circ\theta}^{-1}
\mu_{\alpha_i}^{-1}\mu_{w(\alpha_i)}$ we remark the following:
\begin{ithm}
\bullit $\mu_\alpha$ with $\alpha\in\Phi$ can be computed directly by the known  action of $\theta$;
\bullit $\nu_{\alpha_i} = 1$ for all $i$ and 
$\nu_{\alpha+\beta} = \nu_\alpha \nu_\beta \tfrac{N_{w(\alpha),w(\beta)}}{N_{\alpha,\beta}}$,
which gives a recursion formula for the $\nu_\alpha$ with $\alpha\in \Phi$.
\end{ithm}
Let $H=PE$ be the row Hermite normal form of $E$, with $P$ an invertible 
$\ell\times\ell$ matrix over $\Z$. Let $(p_{i,1},\ldots,p_{i,\ell})$ be
the $i$-th row of $P$, and define $v_i = u_1^{p_{i,1}}\ldots u_\ell^{p_{i,\ell}}$. If $(h_{i,1},\ldots,h_{i,\ell})$ is the $i$-th row of $H$, then the system of equations \eqref{eq:lam} is equivalent to $\lambda_1^{h_{i,1}}\ldots \lambda_\ell^{h_{i,\ell}}=v_i$ for $i=1,\ldots,\ell$. Thus, the equations \eqref{eq:lam} for $i=1,\ldots,\ell$ have a solution over $\C$ if and only if $v_i=1$ whenever the $i$-th row of $H$ is zero.

Performing this for all $w\in W^\theta$ yields $W(\h)$. However, using 
a theorem of Vogan gives a more efficient algorithm; we need some notation to  formulate
it. As before, let $\Phi$ be the root system of $\g^c$ with 
respect to $\h^c$. By \cite[\S VI.7]{knapp02}, a root $\alpha\in\Phi$
is  \myem{real} if   $\alpha\circ\theta=-\alpha$; it is \myem{imaginary} if $\alpha\circ\theta=\alpha$. An imaginary root $\alpha$ is \myem{compact} if $\theta(x_\alpha) = x_\alpha$. Let $\Phi_{\rm r}$ and $\Phi_{\rm i}$\label{pagerts} be the subsets of $\Phi$ consisting of real and imaginary roots, respectively. These are sub-root systems of $\Phi$,
and we denote by $W_{\rm r}$ and $W_{\rm i}$ their Weyl groups. Define $$h_{\rm r} = \sum\nolimits_{\alpha\in \Phi_{\rm r}^+} h_\alpha\quad\text{and}\quad h_{\rm i} = \sum\nolimits_{\alpha\in \Phi_{\rm i}^+}h_\alpha,$$
and $\Phi_{\rm c} = \{\alpha \in \Phi \mid \alpha(h_{\rm r}) = \alpha(h_{\rm i}) = 0\}$, which
is also a sub-root system, with Weyl group $W_{\rm c}$. Denote by $W_{{\rm ci}}$ the 
Weyl group of the sub-root system consisting of the compact imaginary roots.

For the following theorem we refer to \cite[Props 3.12 \&
  4.16]{vogan}, see also \cite[\S 12]{adclou}.

\begin{theorem}\label{thmVogan}
We have $W^\theta = (W_{\rm r}\times W_{\rm i})\rtimes W_{\rm c}^\theta$ and 
$W(\h) = (W_{\rm r}\times W_{\rm i}^\R)\rtimes W_{\rm c}^\theta$, where $W_{\rm i}^\R = W_{\rm i}\cap W(\h)$.
Moreover,  $W_{\rm ci}$  is contained in $W_{\rm i}^\R$.
\end{theorem}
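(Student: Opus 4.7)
The plan is to establish the two decompositions in turn, then extract the final inclusion from the construction.

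For the first decomposition of $W^\theta$: the containments $W_{\rm r},W_{\rm i},W_{\rm c}^\theta\subseteq W^\theta$ are straightforward --- reflections in real and imaginary roots commute with $\theta|_{\h^c}$ by a direct calculation (using $\theta(h_\alpha)=\pm h_\alpha$ according as $\alpha$ is imaginary or real), and $W_{\rm c}^\theta$ is defined by this property. Since $\Phi_{\rm r}$ and $\Phi_{\rm i}$ lie in the $-1$- and $+1$-eigenspaces of $\theta|_{\h^c}$, which are orthogonal under the Killing form, the groups $W_{\rm r}$ and $W_{\rm i}$ commute elementwise and intersect trivially. Normality of $W_{\rm r}\times W_{\rm i}$ in $W^\theta$ follows because conjugation by any element of $W^\theta$ preserves the three root types, hence sends reflections in real (imaginary) roots to reflections in real (imaginary) roots. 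Producing the factorisation of an arbitrary $w\in W^\theta$ as $w_{\rm r}w_{\rm i}w_{\rm c}$ is the main content of Vogan's Proposition~3.12 in~\cite{vogan}; I would quote it rather than reprove it.

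For the decomposition of $W(\h)$: Theorem~\ref{thm:rW} already gives $W(\h)\leq W^\theta$, so intersecting the first decomposition with $W(\h)$ reduces the task to showing $W_{\rm r}\subseteq W(\h)$ and $W_{\rm c}^\theta\subseteq W(\h)$. For a real root $\alpha$ an appropriately rescaled triple of the form $(x_\alpha-x_{-\alpha},\imath(x_\alpha+x_{-\alpha}),\imath h_\alpha)$ spans a $\theta$-stable split $\SL_2$-subalgebra of $\g$; its non-trivial Weyl element lifts via $\exp$ to $G$ and restricts to $s_\alpha$ on $\h^c$, giving $W_{\rm r}\subseteq W(\h)$. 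The inclusion $W_{\rm c}^\theta\subseteq W(\h)$ is subtler: one applies Proposition~\ref{propSE} to $w\in W_{\rm c}^\theta$ and verifies that the system \eqref{eq:lam} always admits a solution, using the recursion $\nu_{\alpha+\beta}=\nu_\alpha\nu_\beta N_{w(\alpha),w(\beta)}/N_{\alpha,\beta}$ together with $\mu_\alpha\mu_{-\alpha}=1$ and the commutation $\theta|_{\h^c}\circ w=w\circ\theta|_{\h^c}$ to reduce the system to a consistent one on the $\langle w,\theta\rangle$-orbits of complex roots. This is essentially Vogan's Proposition~4.16 in~\cite{vogan}.

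For $W_{\rm ci}\subseteq W_{\rm i}^\R$: if $\alpha$ is compact imaginary, then by definition $\theta(x_\alpha)=x_\alpha$, so the vectors $x_\alpha-x_{-\alpha}$, $\imath(x_\alpha+x_{-\alpha})$, and $\imath h_\alpha$ all lie in $\fk$ and span a copy of $\su_2$. Its non-trivial Weyl element lifts into the compact subgroup $K\subseteq G$, hence lies in $N_K(\h)\subseteq N_G(\h)$ and induces $s_\alpha$ on $\h^c$; thus $s_\alpha\in W_{\rm i}\cap W(\h)=W_{\rm i}^\R$.

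The main obstacle I anticipate is the inclusion $W_{\rm c}^\theta\subseteq W(\h)$: unlike for reflections in real or compact-imaginary roots, there is no single rank-one $\theta$-stable subalgebra producing the required lift, and one must instead verify the global consistency of~\eqref{eq:lam} using the full structure of the $\theta$-action on the complex roots. This is precisely why the authors appeal to~\cite{vogan,adclou} rather than carrying out this step directly.
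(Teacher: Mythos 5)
The paper offers no proof of Theorem \ref{thmVogan} at all --- it simply cites \cite[Props 3.12 \& 4.16]{vogan} and \cite[\S 12]{adclou} --- and your outline defers the two genuinely hard steps (the factorisation $w=w_{\rm r}w_{\rm i}w_{\rm c}$ of an element of $W^\theta$ and the inclusion $W_{\rm c}^\theta\subseteq W(\h)$) to exactly those results while correctly supplying the routine reductions (normality of $W_{\rm r}\times W_{\rm i}$, intersecting the first decomposition with $W(\h)$, and the $\su(2)$-lift giving $W_{\rm ci}\subseteq W_{\rm i}^\R$), so this is essentially the same approach. One small slip worth fixing: for a \emph{real} root $\alpha$ one has $\theta(h_\alpha)=-h_\alpha$, hence $h_\alpha\in\g\cap\fp$ and $\imath h_\alpha\notin\g$, so the triple $(x_\alpha-x_{-\alpha},\imath(x_\alpha+x_{-\alpha}),\imath h_\alpha)$ you write down is the compact one appropriate to compact imaginary roots; the split $\SL_2(\R)$ attached to a real root is instead spanned by $h_\alpha$ together with suitably normalised real vectors in the $\sigma$-stable root spaces $\g^c_{\pm\alpha}$, after which the lifting argument for $W_{\rm r}\subseteq W(\h)$ goes through as you intend.
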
  
Thus, to compute $W(\h)$, we conclude from Theorem \ref{thmVogan} that it is sufficient to test whether $w\in W(\h)$ for $w$ in a set of coset representatives of $W_{\rm ci}$ in $W_{\rm i}$. We remark that generators of $W_{\rm c}^\theta$ are easily 
computed by algorithms that work for general permutation groups.

\section{Reductive subalgebras}\label{secRSA}
\noindent In this section, unless otherwise defined, $\g$ is a semisimple Lie 
algebra over a field of characteristic 0. Recall that $\g$ is  
\myem{reductive} if its adjoint representation is completely reducible.  
This is the same as saying that $\g$ is the direct sum of its centre and its 
derived subalgebra, see  \cite[\S 6, no.\ 4, Proposition 5]{bou} or 
\cite[Def.\ 20.5.1]{tauvelyu}. By the same proposition 
(or \cite[Prop.\ 20.5.4]{tauvelyu}), a Lie algebra is
reductive if and only if it has a finite dimensional representation with
nondegenerate trace form.  Following  \cite[\S 6, no.\ 6, Def.\ 5]{bou} 
or  \cite[Def.\ 20.5.1]{tauvelyu}, a subalgebra $\fa\leq\g$ is 
\myem{reductive in} $\g$ if the $\fa$-module $\g$ (via the
adjoint representation) is completely
reducible. Since $\fa$ is a submodule of
$\g$, this implies that $\fa$ is reductive.  Every  $x\in\g$ can be
written uniquely as $x=s+n$, where $s,n\in\g$ with $\ad_{\g}(s)$ semisimple, 
$\ad_{\g}(n)$
nilpotent, and $[s,n]=0$; this is the \myem{Jordan decomposition} of $x$, 
see   \cite[Prop.\ 4.6.2]{deGraafBook} or \cite[Thm III.17]{jac};  
$s$ and $n$ are the semisimple and  nilpotent part of $x$.

\begin{lemma}\label{lemRedSA}
Let $\fa$ be a subalgebra of $\g$.
\begin{ithm}
\item The subalgebra $\fa$ is reductive in $\g$ if and only if $\fa$ is 
reductive and $\ad_{\g}(z)$ is semisimple for all $z$ in the centre of $\fa$.
\item If the Killing form
of $\g$ restricted to $\fa$ is nondegenerate and $\fa$ contains
the semisimple and nilpotent parts of its elements,
then $\fa$ is reductive in $\g$.
\item  Let $\fa$ be reductive in $\g$. A subalgebra $\ft\leq \fa$ is a Cartan
subalgebra of $\fa$ if and only if $\ft$ is a maximal abelian
subspace of $\fa$ consisting of semisimple elements of $\g$.
\end{ithm}
\end{lemma}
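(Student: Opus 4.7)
My plan treats the three parts in sequence, with parts (b) and (c) both routing through (a).

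For part (a), the forward direction begins with the observation that $\fa\subseteq\g$ is an $\fa$-submodule under the adjoint action, so complete reducibility of $\g$ as an $\fa$-module transfers to $\fa$, giving reductivity of $\fa$. For $z$ in the centre of $\fa$, $\ad_\g(z)$ commutes with $\ad_\g(\fa)$, so it preserves any isotypic decomposition of $\g$ as $\fa$-module; on each irreducible summand (passing to the algebraic closure if needed), a Schur-type argument shows $\ad_\g(z)$ acts as a scalar, hence is semisimple on $\g$. The converse uses Weyl's theorem to decompose $\g$ as a completely reducible $[\fa,\fa]$-module, and the assumed commuting semisimple action of the centre then refines this to an $\fa$-module decomposition.

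For part (b), I would first note that $\kappa_\g|_\fa$ is the trace form of the representation $\ad_\g|_\fa\colon\fa\to\gl(\g)$, so its nondegeneracy makes $\fa$ reductive by the criterion stated at the start of the section. To conclude via (a) it then remains to show that $\ad_\g(z)$ is semisimple for every $z$ in the centre of $\fa$. Writing $z=s+n$ for the Jordan decomposition in $\g$, the hypothesis gives $s,n\in\fa$; since $\ad_\g(s)$ and $\ad_\g(n)$ commute with $\ad_\g(z)$ and therefore preserve $\ker\ad_\g(z)\supseteq\fa$, and since the unique Jordan decomposition of $0$ on that kernel is $0=0+0$, both $s$ and $n$ must actually lie in the centre of $\fa$. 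Then for each $x\in\fa$ the endomorphism $\ad_\g(n)\ad_\g(x)$ is nilpotent (commuting factors, one nilpotent), so $\kappa_\g(n,x)=0$, and nondegeneracy forces $n=0$.

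For part (c), the key is a transfer principle: if $\fa$ is reductive in $\g$ and $x\in\fa$ has abstract Jordan decomposition $x=s+n$ in $\fa$, then $\ad_\g(s)$ and $\ad_\g(n)$ are the semisimple and nilpotent parts of $\ad_\g(x)$. To prove this I would decompose $x=y+w$ with $y$ in the centre of $\fa$ and $w\in[\fa,\fa]$; part (a) handles $y$, and the standard preservation of Jordan decomposition for finite-dimensional representations of the semisimple Lie algebra $[\fa,\fa]$ handles $w$. Consequently an element of $\fa$ is $\ad_\fa$-semisimple if and only if it is $\ad_\g$-semisimple. Both directions of the equivalence then reduce to the standard fact that Cartan subalgebras of a reductive Lie algebra coincide with its maximal toral subalgebras: for ``$\Rightarrow$'' the transfer upgrades $\ad_\fa$-semisimplicity to semisimplicity in $\g$ and rules out any enlargement by $\g$-semisimple elements, and for ``$\Leftarrow$'' it is used in the opposite direction to recognise a maximal abelian subspace of $\g$-semisimple elements as a maximal toral subalgebra of $\fa$. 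The chief obstacle is setting up this transfer principle; once it is in place the three parts slot together, and (a) and (b) otherwise assemble from standard Bourbaki-style facts.
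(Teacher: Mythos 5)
Your proposal is correct and follows essentially the same route as the paper: for parts a) and b) the paper simply cites Bourbaki and Tauvel--Yu, and the arguments you reconstruct (complete reducibility passing to the submodule $\fa$ plus a Schur-type argument for central elements in a), and in b) the observation that the nilpotent Jordan part $n$ of a central element is itself central, so that $\kappa_{\g}(n,x)=\tr(\ad_{\g}(n)\ad_{\g}(x))=0$ forces $n=0$ by nondegeneracy) are precisely the standard proofs behind those citations. For part c) both you and the paper decompose $\fa$ into its centre and derived subalgebra and reduce to the identification of Cartan subalgebras with maximal toral subalgebras; you merely make explicit the compatibility of the abstract Jordan decomposition of $\fa$ with the concrete one in $\gl(\g)$, which the paper leaves implicit in its appeal to a) and to Humphreys.
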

\begin{proof}
\begin{iprf}
\item This is \cite[\S 6, no.\ 5, Th\'eor\`eme 4]{bou}.
\item This is \cite[Prop.\ 20.5.12]{tauvelyu}. In that book the ground 
field is assumed to be algebraically closed. However, the proof given there 
works over any field, replacing the reference to  
\cite[Prop.\ 20.5.4 (iii)]{tauvelyu} by  
\cite[\S 6, no.\ 4, Prop.\ 5d]{bou}.
\item
Since $\fa$ is reductive, $\fa=\mathfrak{c}\oplus\mathfrak{l}$, where $\mathfrak{l}$ is
semisimple and $\mathfrak{c}$ is the centre of $\fa$. Since
$\fa$ is reductive in $\g$, it follows that $\ad_{\g}(z)$ is semisimple
for each $z\in \mathfrak{c}$. A subspace $\ft\subseteq \fa$ is a Cartan subalgebra
if and only if $\ft=\mathfrak{c}\oplus \hat{\ft}$ where $\hat{\ft}$ is a Cartan 
subalgebra of $\mathfrak{l}$. Furthermore, $\hat{\ft}$ is a Cartan subalgebra
of $\mathfrak{l}$ if and only if it is maximally toral; see, for example,
\cite[Cor.\ 15.3]{hum}.
\end{iprf}
\end{proof}

\begin{lemma}\label{lemThInv}
Let $\g$ be a real semisimple Lie algebra with Cartan involution $\theta$.
\begin{ithm}
\item If $\fa$ is a $\theta$-stable subalgebra of $\g$, then $\fa$ is reductive in $\g$.
\item Let $\fa\leq \g$ be a subalgebra; then $\fa$ is reductive in $\g$ if and only if $\fa^c$ is reductive in $\g^c$.
\end{ithm}
\end{lemma}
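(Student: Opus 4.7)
For part $(a)$, the plan is to exploit the positive definite symmetric bilinear form $B_\theta$ on $\g$ defined by $B_\theta(x,y)=-\kappa(x,\theta(y))$, which is positive definite because $\theta$ is a Cartan involution. A short calculation using the invariance identity $\kappa([x,y],z)=-\kappa(y,[x,z])$ together with the fact that $\theta$ is an automorphism of $\g$ yields $B_\theta(\ad_\g(x)y,z)=-B_\theta(y,\ad_\g(\theta(x))z)$ for all $x,y,z\in\g$. Since $\fa$ is $\theta$-stable, $\theta(x)\in\fa$ whenever $x\in\fa$, so if $V\subseteq\g$ is any $\fa$-submodule under the adjoint action, then its $B_\theta$-orthogonal complement $V^\perp$ is also an $\fa$-submodule; the positive-definiteness of $B_\theta$ then guarantees $\g=V\oplus V^\perp$. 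Hence $\g$ is completely reducible as an $\fa$-module, which by definition means that $\fa$ is reductive in $\g$.

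For part $(b)$, my plan is to use the characterisation in Lemma~\ref{lemRedSA}(a): a subalgebra is reductive in its ambient semisimple Lie algebra if and only if it is itself reductive and every element of its centre acts semisimply. First, $\fa$ is reductive if and only if $\fa^c$ is, since the decomposition \emph{centre plus derived subalgebra} is preserved by the complexification functor, and the derived and central parts satisfy $[\fa,\fa]^c=[\fa^c,\fa^c]$ and $\fz(\fa^c)=\fz(\fa)\otimes_\R\C$. Secondly, for $x\in\fa$, the operator $\ad_\g(x)$ is semisimple over $\R$ precisely when it is diagonalisable over $\C$, that is, precisely when $\ad_{\g^c}(x)$ is semisimple. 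Assuming $\fa$ is reductive in $\g$, I would then take $z=z_1+\imath z_2\in\fz(\fa^c)$ with $z_1,z_2\in\fz(\fa)$ and note that $\ad_{\g^c}(z_1)$ and $\ad_{\g^c}(z_2)$ are commuting diagonalisable operators (commuting because $\fz(\fa)$ is abelian), so their $\C$-linear combination $\ad_{\g^c}(z)$ is diagonalisable as well, hence semisimple. The reverse implication is immediate from $\fz(\fa)\subseteq\fz(\fa^c)$ together with the equivalence of semisimplicity over $\R$ and over $\C$.

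I do not expect a serious obstacle. In $(a)$ the only point requiring care is the $\theta$-invariance identity for $B_\theta$, which is a short routine check but must be set up correctly using that $\theta$ is both an involution and a Lie algebra automorphism. In $(b)$ the main subtlety is the passage from real $z\in\fz(\fa)$ to arbitrary $z\in\fz(\fa^c)$, which rests on the standard fact that a $\C$-linear combination of commuting semisimple endomorphisms is semisimple; once this is in place, both directions follow by bookkeeping with Lemma~\ref{lemRedSA}(a).
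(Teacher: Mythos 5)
Your proposal is correct, but it is worth noting that the paper does not actually prove this lemma: it simply cites \cite[Cor.\ 1.1.5.4]{warner} for part a) and \cite[\S 6, no.\ 10]{bou} for part b). What you have written is a self-contained argument, and it is essentially the standard one underlying those references. For a), the key identity $B_\theta(\ad_\g(x)y,z)=-B_\theta(y,\ad_\g(\theta(x))z)$ checks out (using $[x,\theta(z)]=\theta([\theta(x),z])$ and the invariance of $\kappa$), and since $B_\theta$ is positive definite the orthogonal complement of an $\fa$-submodule is a genuine complementary $\fa$-submodule, which is exactly the paper's definition of ``reductive in $\g$''; this is the argument behind Warner's corollary. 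For b), your reduction to Lemma~\ref{lemRedSA}a) is sound: $\fz(\fa^c)=\fz(\fa)\otimes_\R\C$ and $[\fa^c,\fa^c]=[\fa,\fa]^c$, semisimplicity of $\ad$ of a real element is insensitive to extending scalars, the elements $z_1,z_2$ of $\fz(\fa)$ commute because they lie in the centre of $\fa$, and a $\C$-linear combination of commuting diagonalisable operators is diagonalisable. The only cosmetic gap is that you should also observe that semisimplicity of the derived subalgebra (equivalently, nondegeneracy of its Killing form) passes between $[\fa,\fa]$ and $[\fa^c,\fa^c]$, so that reductivity of $\fa$ and of $\fa^c$ are indeed equivalent; this is immediate but deserves a sentence. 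In short: a correct proof where the paper offers only citations.
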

\begin{proof}
This is proved in  \cite[Cor.\ 1.1.5.4]{warner} and  \cite[\S 6, no.\ 10]{bou}, respectively.
\end{proof}

\begin{remark}
Let $\g$ be a reductive Lie algebra, and $\h$ a Cartan subalgebra.
Let $\fd$ be the derived subalgebra of $\g$. We note that the real Weyl group 
of $\g$ with respect to $\h$ is the same as the real Weyl group of
$\fd$ with respect to $\fd\cap \h$. So the algorithm described in Section
\ref{sec:realweyl} works also in this case.
\end{remark}

\section{Toral subalgebras}\label{secTSA}
\noindent In this section let $\g$ be a real semisimple Lie algebra.
A subalgebra $\fa\leq \g$ is \myem{toral} if it is abelian and 
$\ad_\g (x)$ is semisimple for all $x\in \fa$. Recall that if $\fa$ is reductive in $\g$, then the Cartan subalgebras of $\fa$ are exactly the maximal toral subalgebras of $\fa$, see Lemma \ref{lemRedSA}; in particular, every Cartan subalgebra of $\fa$ lies in some Cartan subalgebra of $\g$. We now study toral subalgebras and their relation to Cartan decompositions.

\begin{lemma}\label{rr1}
Let $\ft\leq \g$ be a toral subalgebra and denote by $\ft_{\rm r},\ft_{\rm i}\subseteq\ft$ the sets of elements $x\in \ft$ such that
$\ad_\g (x)$ has only real and only purely imaginary eigenvalues, respectively.
Let $\theta$ be a Cartan involution of $\g$. Then the following hold.
\begin{ithm}
\item Both $\ft_{\rm r}$ and $\ft_{\rm i}$ are subspaces of $\ft$.
\item If $\ft$ is $\theta$-stable,  then $\ft=\ft_{\rm i}\oplus \ft_{\rm r}$ is decomposition into the $1$- and $(-1)$-eigenspace of the restriction $\theta|_\ft$.
\end{ithm}
\end{lemma}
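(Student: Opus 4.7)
The plan for (a) is to use a simultaneous weight decomposition. Since $\ft$ is toral and abelian, the operators $\{\ad_\g(x):x\in\ft\}$ are commuting and $\C$-diagonalizable, so $\g^c=\bigoplus_\lambda V_\lambda$ splits into finitely many common eigenspaces indexed by weights $\lambda\in(\ft^c)^\ast$, and the eigenvalues of $\ad_\g(x)$ on $\g^c$ are exactly the scalars $\lambda(x)$. Each weight restricts to an $\R$-linear map $\ft\to\C$, so the condition ``$\lambda(x)\in\R$ for every weight $\lambda$'' cuts out a real subspace of $\ft$, and similarly for $\imath\R$; these are precisely $\ft_{\rm r}$ and $\ft_{\rm i}$.

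For (b), the crucial intermediate fact I would establish first is that $\ad X$ has real eigenvalues for every $X\in\fp$ and purely imaginary eigenvalues for every $X\in\fk$. The natural route is to introduce the positive-definite inner product $B_\theta(x,y)=-\kappa(x,\theta(y))$ on $\g$ and to verify, using only invariance of the Killing form together with $\theta(X)=\pm X$, that $\ad X$ is $B_\theta$-symmetric when $X\in\fp$ and $B_\theta$-skew-symmetric when $X\in\fk$. This short computation is the only genuine calculation in the argument, and is the step I expect to require the most care.

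The remainder of (b) is then a dimension count. Since $\ft$ is $\theta$-stable one has the $\pm 1$-eigenspace decomposition $\ft=(\ft\cap\fk)\oplus(\ft\cap\fp)$, and the previous step gives the inclusions $\ft\cap\fk\subseteq\ft_{\rm i}$ and $\ft\cap\fp\subseteq\ft_{\rm r}$. Moreover $\ft_{\rm i}\cap\ft_{\rm r}=\{0\}$: any $x$ in the intersection satisfies $\lambda(x)\in\R\cap\imath\R=\{0\}$ for every weight $\lambda$, whence $\ad_\g(x)=0$, and semisimplicity of $\g$ forces $x=0$. Combining these observations,
\[
\dim\ft=\dim(\ft\cap\fk)+\dim(\ft\cap\fp)\leq\dim\ft_{\rm i}+\dim\ft_{\rm r}\leq\dim\ft,
\]
so equality holds throughout, and the inclusions upgrade to $\ft_{\rm i}=\ft\cap\fk$ and $\ft_{\rm r}=\ft\cap\fp$, which is exactly the claimed eigenspace decomposition of $\theta|_\ft$. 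Once the skew/symmetry computation is in hand, everything else is formal.
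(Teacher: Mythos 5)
Your proof is correct and follows essentially the same route as the paper: part (a) via simultaneous diagonalisability of $\ad_\g(\ft)$, and part (b) via the inclusions $\ft\cap\fk\subseteq\ft_{\rm i}$ and $\ft\cap\fp\subseteq\ft_{\rm r}$ coming from the eigenvalue behaviour of $\ad X$ for $X\in\fk$ and $X\in\fp$. The only differences are cosmetic: the paper cites Onishchik, Prop.\ 5.1(ii), for that eigenvalue fact where you rederive it from $B_\theta$-skewness/symmetry, and the paper closes by writing $x=a+b$ with $a\in\ft_+$, $b\in\ft_-$ and arguing $b=0$ directly, rather than by your (equally valid) dimension count.
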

\begin{proof}
\begin{iprf}
\item This follows from the fact that $\ad_\g (\ft)$ is simultaneously diagonalisable over $\C$.
\item Let $\ft_{\pm}$ be the $\pm 1$-eigenspace of $\theta$. It follows from  
\cite[Prop.\ 5.1(ii)]{onishchik} that $\ft_+ \subseteq \ft_{\rm i}$ 
and $\ft_-\subseteq
\ft_{\rm r}$. Let $x \in \ft_{\rm i}$ and write $x=a+b$ with  $a\in \ft_+$ and  $b\in \ft_-$;
then, since $\ad_\g(x)$ has purely imaginary eigenvalues, $b$ has to be 0. Thus, $\ft_+=\ft_{\rm i}$, hence $\ft_-=\ft_{\rm r}$.
\end{iprf}
\end{proof}

If $\h\leq\g$ is a $\theta$-stable Cartan subalgebra, then by 
Lemma \ref{rr1}b)
\begin{equation*}
\begin{aligned}
\h\cap \fk &= \{ h\in \h \mid {\ad}_{\g} h \text{ has only purely imaginary eigenvalues}\},\\
\h\cap \fp &= \{ h\in \h \mid {\ad}_{\g} h \text{ has only real eigenvalues}\}.
\end{aligned}
\end{equation*}
The dimension of $\h\cap\fp$ is the \myem{noncompact dimension} of
$\h$. Since $\ad_\g(h)$ and $\ad_\g(g(h))$ have the same eigenvalues
for every $h\in\h$ and $g$ in the adjoint group $G$ of $\g$,  it
follows that the  noncompact dimension is a well-defined concept also for non $\theta$-stable Cartan subalgebras: the noncompact dimension of any Cartan subalgebra $\h'$ is the one of $\h=g(\h')$.  It follows from \cite[Prop.\ 6.61]{knapp02} that all Cartan subalgebras of maximal noncompact dimension are $G$-conjugate. The \myem{real rank}\label{defrr} of $\g$ is the noncompact dimension of a maximally noncompact Cartan subalgebra of $\g$, cf.\ \cite[p.\ 424]{knapp02}. The next definition generalises these concepts for subalgebras reductive in $\g$.

\begin{definition}\label{defRR}
Let $\fa\leq \g$ be reductive in $\g$ and let $\h\leq\fa$ be a Cartan subalgebra. The \myem{noncompact dimension of $\h$} is $\dim \h_{\rm r}$, where $\h_{\rm r}$ is as in Lemma \ref{rr1}. A Cartan subalgebra $\h\leq \fa$ is \myem{maximally noncompact} if the noncompact dimension of $\h$  is as large as possible. The noncompact dimension of such a Cartan subalgebra is called the \myem{real rank} of $\fa$.
\end{definition}
In the remainder of this section, we discuss how to describe and compute the real rank of a subalgebra which is reductive in $\g$. We start with a preliminary result.

\begin{lemma}\label{rr2}
If $x\in \g$ is semisimple, then there exist unique $a,b\in \g$ with 
\begin{ithm}
\item[\rm(1)] $x=a+b$ with both $a$ and $b$ semisimple and $[x,a]=[x,b]=[a,b]=0$,
\item[\rm(2)] $\ad_\g(a)$ has purely imaginary eigenvalues only, $\ad_\g(b)$ has real eigenvalues only,
\item[\rm(3)] if $y\in \g$ with $[x,y]=0$, then $[a,y]=[b,y]=0$.
\end{ithm}
The elements $x_{\rm i}=a$ and $x_{\rm r}=b$ are the \myem{imaginary part} and \myem{real part} of $x$.
\end{lemma}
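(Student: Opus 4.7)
The plan is to work on the complexification $\g^c$, use the eigenspace decomposition of $\ad_{\g^c}(x)$ to split it into a purely imaginary-eigenvalue part $A$ and a real-eigenvalue part $B$, and then show that both $A$ and $B$ are inner derivations coming from elements $a,b\in\g$. Uniqueness will then fall out of property~(3) via a short argument.

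More concretely: since $x$ is semisimple, one has the eigenspace decomposition $\g^c=\bigoplus_{\lambda\in\C}\g^c_\lambda$ of $\ad_{\g^c}(x)$, and if $\sigma$ denotes the conjugation of $\g^c$ relative to $\g$, then $\sigma(x)=x$ forces $\sigma(\g^c_\lambda)=\g^c_{\bar\lambda}$. I would then define $A,B\in\End(\g^c)$ by $A|_{\g^c_\lambda}=\imath\,\mathrm{Im}(\lambda)\,\mathrm{id}$ and $B|_{\g^c_\lambda}=\mathrm{Re}(\lambda)\,\mathrm{id}$, so that $\ad_{\g^c}(x)=A+B$. A direct check on a pair of eigenvectors $u\in\g^c_\lambda$, $v\in\g^c_\mu$ (using $[u,v]\in\g^c_{\lambda+\mu}$ and the $\R$-linearity of $\mathrm{Re}$ and $\mathrm{Im}$) verifies that $A$ and $B$ are derivations; Lagrange interpolation on the finite eigenvalue set presents them as complex polynomials in $\ad_{\g^c}(x)$, so $A$, $B$, and $\ad_{\g^c}(x)$ pairwise commute.

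Since $\g^c$ is semisimple, every derivation is inner, so $A=\ad_{\g^c}(a)$ and $B=\ad_{\g^c}(b)$ for unique $a,b\in\g^c$. The step requiring most attention is showing $a,b\in\g$: I would combine $\sigma(\g^c_\lambda)=\g^c_{\bar\lambda}$ with $\imath\,\mathrm{Im}(\bar\lambda)=-\imath\,\mathrm{Im}(\lambda)$ to verify $\sigma\circ A=A\circ\sigma$, which upon applying $\sigma([a,v])=[\sigma(a),\sigma(v)]$ forces $[\sigma(a)-a,\g^c]=0$, hence $\sigma(a)=a$; the same argument yields $b\in\g$. From here the axioms follow: $\ad(x-a-b)=0$ and semisimplicity of $\g$ give $x=a+b$; commutativity of $A$, $B$, and $\ad(x)$ transfers to the bracket identities $[x,a]=[x,b]=[a,b]=0$; semisimplicity of $a,b$ is semisimplicity of $A,B$; property~(2) is built into the construction; and~(3) follows because $[x,y]=0$ places $y$ in the $0$-eigenspace of $\ad_{\g^c}(x)$, on which both $A$ and $B$ vanish by design.

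Uniqueness is then straightforward: given $x=a+b=a'+b'$, property~(3) shows that $a,a',b,b'$ pairwise commute, so $c:=a-a'=b'-b$ has $\ad_\g(c)$ semisimple and simultaneously the difference of two commuting operators with purely imaginary eigenvalues and of two commuting operators with real eigenvalues; its eigenvalues are therefore both purely imaginary and real, hence zero, so $\ad_\g(c)=0$ and $c=0$ by semisimplicity of $\g$. The main obstacle is thus the descent of $A$ and $B$ from $\g^c$ to $\g$; everything else reduces to routine manipulations with commuting semisimple operators.
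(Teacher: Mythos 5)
Your proof is correct, but it follows a genuinely different route from the paper's. The paper picks a Cartan subalgebra $\h$ of $\g$ containing $x$ together with a Cartan involution $\theta$ stabilising $\h$ (citing Knapp), sets $a=\tfrac12(x+\theta(x))\in\fk$ and $b=\tfrac12(x-\theta(x))\in\fp$, and reads off property (2) from Lemma \ref{rr1} applied to $\h$; uniqueness comes from $\fk\cap\fp=\{0\}$, and (3) from simultaneous diagonalisation in a Chevalley basis. You instead construct the decomposition purely on the level of operators: splitting $\ad_{\g^c}(x)$ eigenvalue-wise into $A$ and $B$, observing these are derivations (indeed polynomials in $\ad_{\g^c}(x)$, so canonical), realising them as $\ad_{\g^c}(a)$, $\ad_{\g^c}(b)$ by innerness of derivations, and descending to $\g$ via $\sigma$-equivariance. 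Your route is more self-contained --- it avoids the existence of $\theta$-stable Cartan subalgebras and any appeal to the Cartan decomposition --- and your uniqueness argument (all four elements pairwise commute by (3), so $c=a-a'$ is semisimple with eigenvalues simultaneously real and purely imaginary, hence zero) is actually more complete than the paper's one-line appeal to $\fk\cap\fp=\{0\}$, which tacitly requires first placing $a'$ and $b'$ in $\fk$ and $\fp$. What the paper's approach buys in exchange is brevity given the surrounding machinery and, more importantly, the explicit link $a\in\fk$, $b\in\fp$ to a Cartan decomposition, which is how the lemma is exploited later (e.g.\ in Lemma \ref{rr3} and Lemma \ref{lemMNCCSA}); that link is recoverable from your construction but is not stated by it.
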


\begin{proof}
Let $\h$ be a Cartan subalgebra of $\g$ containing $x$. Let $\theta$ be
a Cartan involution of $\g$ stabilising $\h$; this exists by \cite[Prop.\ 6.59]{knapp02}.  Let $\g=\fk\oplus\fp$ be
the corresponding Cartan decomposition. Now define
$a = \tfrac{1}{2}(x+\theta(x))\in\fk$ and $b=\tfrac{1}{2}(x-\theta(x))\in\fp$, so that $x=a+b$. Since $a,b\in\h$,  both $a$ and $b$ are semisimple and commute with $x$; in particular, $0=[a,x]=[a,b]$. By Lemma \ref{rr1}, the adjoints $\ad_\g(a)$ and $\ad_\g(b)$ only have purely imaginary and real eigenvalues, respectively. Note that if $x=a'+b'$ with the same properties, then $a'-a=b-b' \in \fk\cap \fp=\{0\}$, thus $a$ and $b$ are unique. It remains to prove (3). With respect to a Chevalley basis of $\g^c$ (with respect to $\h^c$), it follows that $\ad_{\g^c}(a)$ and $\ad_{\g^c}(b)$ are represented by diagonal matrices $A$ and $B$  with  purely imaginary  and real entries, respectively. In particular, $\ad_{\g^c}(x)=\ad_{\g^c}(a)+ \ad_{\g^c}(b)$ is represented by $A+B$. This implies (3).
\end{proof}

\begin{lemma}\label{rr3}
Let $\ft\leq \g$ be a toral subalgebra and $\theta$ a Cartan involution of $\g$. If $\ft$ is $\theta$-stable,
then it is closed under taking real and imaginary parts. Conversely, if
$\ft$ is closed under taking real and imaginary parts, then there is a
Cartan involution $\theta'$ of $\g$ stabilising $\ft$.
\end{lemma}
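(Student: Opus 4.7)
The plan is to prove both implications by reducing to the uniqueness statement in Lemma \ref{rr2} and to the eigenspace characterisation $\h\cap\fk = \h_{\rm i}$, $\h\cap\fp = \h_{\rm r}$ of a $\theta$-stable Cartan subalgebra established in Lemma \ref{rr1}b.

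For the forward direction I would take $x\in\ft$ and form $a = \tfrac{1}{2}(x+\theta(x))$ and $b = \tfrac{1}{2}(x-\theta(x))$, which both lie in $\ft$ by $\theta$-stability. Since $\ft$ is toral, $a$ and $b$ are semisimple and commute with $x$ and with each other; they also lie in the $+1$- and $(-1)$-eigenspaces of $\theta$ respectively, so by Lemma \ref{rr1}b the element $\ad_\g(a)$ has only purely imaginary eigenvalues and $\ad_\g(b)$ has only real eigenvalues. The uniqueness part of Lemma \ref{rr2} then forces $a = x_{\rm i}$ and $b = x_{\rm r}$, so both parts belong to $\ft$.

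For the converse I would first enlarge $\ft$ to a Cartan subalgebra $\h$ of $\g$: because $\ft$ is toral it is contained in a maximal toral subalgebra, which is a Cartan subalgebra by Lemma \ref{lemRedSA}c applied to $\fa=\g$. Then Proposition 6.59 of \cite{knapp02} supplies a Cartan involution $\theta'$ of $\g$ stabilising $\h$, and I claim this $\theta'$ already stabilises $\ft$. To verify the claim, take $x\in\ft$ and write $x = x_{\rm i} + x_{\rm r}$ according to Lemma \ref{rr2}; by hypothesis both summands lie in $\ft$, and hence in $\h$. By Lemma \ref{rr1}b the subspace $\h\cap\fk$ coincides with the set of $h\in\h$ for which $\ad_\g h$ has only purely imaginary eigenvalues, so $x_{\rm i}\in\h\cap\fk$ and $\theta'(x_{\rm i}) = x_{\rm i}$; analogously $x_{\rm r}\in\h\cap\fp$ and $\theta'(x_{\rm r}) = -x_{\rm r}$. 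Therefore $\theta'(x) = x_{\rm i}-x_{\rm r}\in\ft$, as required.

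Neither direction presents a substantial obstacle; the only point that requires care is noting that in the converse one must \emph{not} attempt to construct $\theta'$ directly on $\ft$ first and then extend it, but instead extend $\ft$ to a Cartan subalgebra and invoke the Knapp existence result, after which the eigenspace description from Lemma \ref{rr1}b does all the work. Showing that the decomposition furnished by Lemma \ref{rr2} is intrinsic to $\g$ (independent of any auxiliary Cartan involution) is what makes both implications nearly automatic.
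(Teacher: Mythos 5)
Your proof is correct and takes essentially the same route as the paper's: the forward direction identifies $a=\tfrac{1}{2}(x+\theta(x))$ and $b=\tfrac{1}{2}(x-\theta(x))$ with the imaginary and real parts via Lemma \ref{rr1}b) and the uniqueness in Lemma \ref{rr2}, and the converse enlarges $\ft$ to a Cartan subalgebra, invokes the existence of a Cartan involution stabilising it, and uses Lemma \ref{rr1}b) to see that this involution fixes imaginary parts and negates real parts. No substantive differences from the paper's argument.
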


\begin{proof}
Suppose $\ft$ is $\theta$-stable, and let $x\in \ft$. Then $x=a+b$,
where $a = \tfrac{1}{2}(x+\theta(x))$ and $b=\tfrac{1}{2}(x-\theta(x))$. 
The proof of Lemma \ref{rr2} shows that $a,b\in\ft$ are the imaginary and real
parts of $x$. To prove the converse, let $\h$ be a Cartan subalgebra of $\g$ containing $\ft$, and let $\theta'$ be
a Cartan involution of $\g$ stabilising $\h$; this exists by
\cite[Prop.\ 6.59]{knapp02} and \cite[Cor.\ 6.19]{knapp02}. Let $x\in
\ft$ and let
$x=a+b$ be the decomposition into imaginary and real parts. Since
$a,b\in \ft\leq \h$, Lemma \ref{rr1} yields, $\theta'(a) = a$ and 
$\theta'(b) = -b$, so $\theta'(x) = a-b\in \ft$.
\end{proof}

The next lemma shows that Definition \ref{defRR} is in line with the definition for semisimple Lie algebras.

\begin{lemma}\label{lemMNCCSA}
Let $\fa\leq\g$ be reductive in  $\g$, and decompose $\fa = \fd \oplus \ft$ where $\fd=[\fa,\fa]$ and $\ft$ is the  centre of $\fa$. 
\begin{ithm}
\item Let $\h$ be a Cartan subalgebra of $\fa$ and decompose $\h = \h_d \oplus \ft$ with $\h_d\leq \fd$. Using the notation of Lemma \ref{rr1}, we have $\h_{\rm r} = (\h_d)_{\rm r} \oplus \ft_{\rm r}$.
\item A Cartan subalgebra  $\h\leq \fa$ is  maximally noncompact if and only if 
$\h_d$ is a maximally noncompact Cartan subalgebra of $\fd$. 
\item Up to conjugacy in its adjoint group, $\fa$ has a unique maximally noncompact Cartan subalgebra.
\end{ithm}
\end{lemma}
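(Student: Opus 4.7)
The plan is to transport the problem to a setting in which a single Cartan involution $\theta$ of $\g$ is adapted to both $\fa$ and $\h$, so that (a) falls out of the $\pm1$-eigenspace decomposition of Lemma \ref{rr1}(b), and (b) and (c) follow by separation of variables along the splitting $\fa=\fd\oplus\ft$.

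For (a) I would first pick a Cartan involution $\theta$ of $\g$ with $\theta(\fa)=\fa$; such a $\theta$ exists because $\fa$ is reductive in $\g$ (this is the same Karpelevich--Mostow-type input used in Lemma \ref{lemCD}, cf.\ \cite[Cor.\ 6.1]{onishchik}). Then $\theta$ is an automorphism of $\fa$, hence stabilises both the centre $\ft$ and the derived subalgebra $\fd$, and $\theta|_\fd$ is a Cartan involution of the semisimple Lie algebra $\fd$. Applying \cite[Prop.\ 6.59]{knapp02} to $\fd$, there is an element $g$ in the adjoint group of $\fd$ with $g(\h_d)$ being $\theta$-stable; extending $g$ to $\fa$ by the identity on $\ft$ gives an element of the adjoint group of $\fa$, and $g(\h)=g(\h_d)\oplus\ft$ is $\theta$-stable. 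Since $g$ extends further to the inner automorphism $\exp(\ad_\g(x))$ of $\g$ for some $x\in\fd$, and since inner automorphisms of $\g$ commute with the formation of real and imaginary parts (by the uniqueness in Lemma \ref{rr2}, inner automorphisms preserve the eigenvalues of $\ad_\g$), the assertion of (a) for $\h$ is equivalent to the assertion for $g(\h)$. Hence I may assume $\h$ itself, together with $\h_d$ and $\ft$, is $\theta$-stable. Lemma \ref{rr1}(b) then identifies $\h_{\rm r}$, $(\h_d)_{\rm r}$, $\ft_{\rm r}$ with the $(-1)$-eigenspaces of $\theta$ on $\h$, $\h_d$, $\ft$; taking $(-1)$-eigenspaces in $\h=\h_d\oplus\ft$ gives $\h_{\rm r}=(\h_d)_{\rm r}\oplus\ft_{\rm r}$.

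Part (b) is then immediate: the centre $\ft$ is intrinsic to $\fa$, so $\ft_{\rm r}$ and $\dim\ft_{\rm r}$ are independent of the choice of $\h$, while $\h_d$ ranges over all Cartan subalgebras of $\fd$. By (a), $\dim\h_{\rm r}=\dim(\h_d)_{\rm r}+\dim\ft_{\rm r}$ is maximal precisely when $\dim(\h_d)_{\rm r}$ is, that is, when $\h_d$ is maximally noncompact in $\fd$. For (c), since $\fd$ is semisimple, \cite[Prop.\ 6.61]{knapp02} shows that any two maximally noncompact Cartan subalgebras $\h_d,\h_d'$ of $\fd$ are conjugate under the adjoint group $D$ of $\fd$. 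Because $\ad_\fa(\ft)=0$, the adjoint group of $\fa$ acts on $\fa$ as $D$ on the $\fd$-factor and as the identity on $\ft$; a conjugating element therefore sends $\h_d\oplus\ft$ to $\h_d'\oplus\ft$, which combined with (b) yields the required uniqueness.

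The main obstacle is the opening step of (a): one has to exploit that $\fa$ is reductive in $\g$ twice over, first to produce a Cartan involution of $\g$ preserving $\fa$, and then, inside $\fa$, to move $\h$ by an automorphism arising from an inner automorphism of $\g$ so that it becomes $\theta$-stable, without destroying the splitting $\h=\h_d\oplus\ft$. The crucial observation tying everything together is that real and imaginary parts in $\g$ are invariant under inner automorphisms of $\g$; once this simultaneous normalisation is in place, the remainder is an eigenspace bookkeeping along the $\fd\oplus\ft$ decomposition.
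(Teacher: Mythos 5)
Your derivation of (b) and (c) from (a) is essentially the paper's, but the opening step of your proof of (a) has a genuine gap: a Cartan involution $\theta$ of $\g$ with $\theta(\fa)=\fa$ need \emph{not} exist, even though $\fa$ is reductive in $\g$. The obstruction sits precisely in the centre $\ft$: if $\theta(\fa)=\fa$ then $\theta(\ft)=\ft$, and Lemma \ref{rr1}b) would force $\ft=\ft_{\rm i}\oplus\ft_{\rm r}$, whereas in general $\ft_{\rm i}\oplus\ft_{\rm r}$ is a proper subspace of $\ft$. Concretely, take $\g=\SL_2(\R)\oplus\SL_2(\R)$ and $\fa=\ft=\R x$ with $x=(h,k)$, where $h=\diag(1,-1)$ lies in the first summand and $k$ generates $\so(2)$ in the second; then $\ad_\g(x)$ is semisimple with eigenvalues $0,\pm2,\pm2\imath$, so $\fa$ is toral (hence reductive in $\g$ by Lemma \ref{lemRedSA}), yet $\ft_{\rm i}=\ft_{\rm r}=\{0\}$ and no Cartan involution of $\g$ can stabilise $\fa$ (it would have to send $x$ to $\pm x$, placing $x$ in $\fk$ or in $\fp$ and contradicting the mixed spectrum of $\ad_\g(x)$). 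The Karpelevich--Mostow extension theorem you invoke applies to the semisimple part $\fd$, not to all of $\fa$. Note that the lemma itself survives in this example ($\h=\ft$, $\h_{\rm r}=\{0\}=(\h_d)_{\rm r}\oplus\ft_{\rm r}$); it is only your route to it that breaks.

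The paper circumvents exactly this difficulty. It produces, via Karpelevich--Mostow applied to $\fd$, a Cartan involution of $\g$ stabilising only $\h_d$ (so that $\h_d$ is closed under taking real and imaginary parts, by Lemma \ref{rr3}), and it replaces the unavailable $\theta$-stability of $\ft$ by enlarging $\ft$ to a toral subalgebra $\ft'$ with $[\fd,\ft']=\{0\}$, $\fd\cap\ft'=\{0\}$, and $\ft'$ closed under taking real and imaginary parts, using Lemma \ref{rr2} iteratively. Writing $h=u+v\in\h_{\rm r}$ with $u\in\h_d$ and $v\in\ft$, one decomposes $u$ inside $\h_d$ and $v$ inside $\ft'$ and uses $\fd\cap\ft'=\{0\}$ to force both imaginary parts to vanish. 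That enlargement is the idea your argument is missing; your remaining bookkeeping along $\fd\oplus\ft$, and the (correct) observation that inner automorphisms of $\g$ commute with taking real and imaginary parts, would then go through.
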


\begin{proof}
\begin{iprf}
\item  Lemma \ref{lemRedSA} shows that $\fa$ is reductive, so we can decompose $\fa = \fd \oplus \ft$ and $\h=\h_d\oplus \ft$.
Suppose that $\ft$ is not closed under taking real and imaginary parts, say $x=x_{\rm i}+x_{\rm r}\in\ft$ with $x_{\rm i},x_{\rm r}\notin \ft$. Let $\ft_1$ be the subalgebra
spanned by $\ft$, $x_{\rm i}$, and $x_{\rm r}$. It follows from Lemma \ref{rr2} that $\ft_1$ is toral 
and $[\fd,\ft_1]=0$. Iterating this process, we find a toral subalgebra $\ft'$ containing $\ft$, such that  $[\fd,\ft']=0$ and $\ft'$ is closed under taking real and imaginary parts. Note that  $\fd \cap \ft' = 0$ since $[\fd,\ft']=0$ and $\fd$ has trivial centre. By a theorem of Karpelevich-Mostow (see \cite[Cor.\ 6.1]{onishchik}), every Cartan involution of $\fd$ extends to a Cartan involution of $\g$; thus there exists a Cartan involution $\theta$ of $\g$ which stabilises $\h_d$.  Since $\h\leq \g$ (hence also $\h_d\leq \g$) is toral by Lemma \ref{lemRedSA}, it follows from Lemma \ref{rr3} that $\h_d$ is closed under taking real and imaginary parts.  Now let $h\in \h_{\rm r}$ and write $h=u+v$ with $u\in \h_d$ and $v\in \ft$. Decompose $u=u_{\rm r}+u_{\rm i}$ and $v=v_{\rm r}+v_{\rm i}$, and note that  $u_{\rm r},u_{\rm i}\in \h_d$ and $v_{\rm r},v_{\rm i} \in \ft'$. In particular, $h=(u_{\rm r}+v_{\rm r})+(u_{\rm i}+v_{\rm i})$ with $u_{\rm r}+v_{\rm r} \in (\h_d)_{\rm r}\oplus \ft'_{\rm r}$ and $u_{\rm i}+v_{\rm i} \in (\h_d)_{\rm i}\oplus \ft'_{\rm i}$. But $h\in\h_{\rm r}$, so $u_{\rm i}+v_{\rm i}=0$. Since $\fd \cap \ft' = 0$, we get $u_{\rm i}=v_{\rm i}=0$, hence  $\h_{\rm r} \subseteq (\h_d)_{\rm r} \oplus \ft_{\rm r}$. The other inclusion is obvious.
\item By the proof of Part a), there is a Cartan involution $\theta$ which stabilises $\fd$ and $\h_d$. Let $\fd=\fk'\oplus\fp'$ and $\g=\fk\oplus\fp$ be the corresponding Cartan decompositions; clearly, $\fk'\leq \fk$ and $\fp'\leq \fp$. Recall that $\fa\leq \g$ is reductive in $\g$, hence $\h$ is toral, hence $\h_d$ is toral. It follows from Lemma \ref{rr1} that $\h_d=(\h_d)_{\rm r}\oplus (\h_d)_{\rm i}$ with $(\h_d)_{\rm i}= \h_d \cap \fk'$ and $(\h_d)_{\rm r}= \h_d \cap \fp'$. Part a) yields $\h_{\rm r} = (\h_d \cap \fp') \oplus \ft_{\rm r}$, which shows that $\dim \h_{\rm r}$ is as large as possible if and only if  $\dim (\h_d \cap \fp')$ is as large as possible, if and only if $\h_d\leq \fd$ is a maximally noncompact Cartan subalgebra.
\item This follows from b) and the uniqueness of maximally noncompact Cartan subalgebras in semisimple real Lie algebras, see \cite[Prop.\ 6.61]{knapp02}.
\end{iprf}
\end{proof}

\section{Graded semisimple Lie algebras}\label{secGSLA}
\noindent Let $\g$ be a semisimple real Lie algebra. For a positive
integer $m$ let  $\Z_m$ be the integers modulo $m$; in addition,
define  $\Z_\infty = \Z$. A $\Z_m$-grading of $\g$ with
$m\in\mathbb{N}\cup\{\infty\}$ is a decomposition into subspaces
\[\g = \bigoplus\nolimits_{i\in \Z_m} \g_i\]
such that $[\g_i,\g_j]\subset \g_{i+j}$ for all $i,j\in \Z_m$. This implies that $\g_0\leq\g$
is a subalgebra. As usual, write $\g_i^c = \g_i\otimes_\R \C$, so that $\g^c = \bigoplus_{i\in
\Z_m} \g^c_i$. As before, denote by $\sigma$  the conjugation of $\g^c$ with respect to $\g$. 

\begin{lemma}\label{lemg0}
\begin{ithm}
\item There is a Cartan involution $\theta$ of $\g$ such that 
$\theta(\g_i) =\g_{-i}$ for all $i$. 
\item The subalgebras $\g_0^c$ and $\g_0$ are reductive in $\g^c$ and $\g$ respectively.
\end{ithm}
\end{lemma}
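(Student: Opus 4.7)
The plan is to prove (a) first and then derive (b) directly. For (a) I would distinguish the two cases $m=\infty$ and $m<\infty$. The grading $\g^c=\bigoplus_{i\in\Z_m}\g_i^c$ is induced either by a semisimple element $Z$ with $\g_i^c=\ker(\ad Z-i)$ (when $m=\infty$), or by an automorphism $\phi$ of $\g^c$ of order $m$ with $\phi|_{\g_i^c}=\zeta^i\cdot\mathrm{id}$ for $\zeta=e^{2\pi\imath/m}$ (when $m<\infty$). Because each $\g_i\subseteq\g$ is a real subspace, $\sigma$ preserves every $\g_i^c$; since every derivation of a semisimple Lie algebra is inner, this yields $Z\in\g$ in the infinite case, while in the finite case a direct calculation shows $\sigma\phi\sigma^{-1}=\phi^{-1}$.

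For $m=\infty$ I would take a Cartan subalgebra $\h$ of $\g$ containing the semisimple element $Z$. Using \cite[Prop.\ 6.59]{knapp02} (and conjugating back the resulting Cartan involution by an element of $G$ if necessary) I may assume $\h$ is stable under some Cartan involution $\theta$ of $\g$. Since $\ad_\g Z$ has only real eigenvalues, Lemma \ref{rr1}(b) places $Z$ in the $(-1)$-eigenspace $\h_{\rm r}$ of $\theta|_\h$, so $\theta(Z)=-Z$ and therefore $\theta(\g_i)=\g_{-i}$ for every $i$.

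For $m<\infty$ the strategy is to produce a compact real structure $\tau$ on $\g^c$ commuting with both $\sigma$ and $\phi$. Granting this, $\theta:=\sigma\tau$ is an involution of $\g^c$ commuting with $\sigma$, hence restricts to an involution of $\g$; since $\tau$ defines a compact real form of $\g^c$, the form $-\kappa(\cdot,\theta\,\cdot)$ is positive definite on $\g$, so $\theta$ is a Cartan involution. Finally $\theta\phi\theta^{-1}=\sigma\tau\phi\tau^{-1}\sigma^{-1}=\sigma\phi\sigma^{-1}=\phi^{-1}$, which forces $\theta(\g_i^c)=\g_{-i}^c$ and hence $\theta(\g_i)=\g_{-i}$. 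The \emph{hard part} is the existence of $\tau$: together, $\phi$ and $\sigma$ generate a finite (dihedral) group of order $2m$ of real-linear (anti)automorphisms of $\g^c$, and one invokes the classical Mostow--Karpelevich-type result that any such finite group preserves some compact real form (the usual argument applies the Cartan fixed-point theorem to the action on the symmetric space of compact forms of $\g^c$, cf.\ \cite[Ch.\ 4]{onishchik}).

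Part (b) then follows immediately from (a): since $\theta(\g_i)=\g_{-i}$ one has $\theta(\g_0)=\g_0$, so $\g_0$ is a $\theta$-stable subalgebra of $\g$. Lemma \ref{lemThInv}(a) gives that $\g_0$ is reductive in $\g$, and Lemma \ref{lemThInv}(b) promotes this to $\g_0^c$ being reductive in $\g^c$.
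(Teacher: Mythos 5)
Your argument is correct, but for part a) it takes a genuinely different (more self-contained) route than the paper, which simply cites \cite[Thm 3.4(2)]{hongvanle} for that statement. Your case split is the natural one: for $m=\infty$ you observe that the defining element $Z$ of the grading lies in $\g$ (this is exactly Lemma \ref{lem:defelt}, which you could cite instead of rederiving it via innerness of derivations), place it in a Cartan subalgebra $\h$ of $\g$, transport a Cartan involution so that it stabilises $\h$ via \cite[Prop.\ 6.59]{knapp02}, and conclude $\theta(Z)=-Z$ from Lemma \ref{rr1}b) because $\ad_\g Z$ has only real (integer) eigenvalues; for $m<\infty$ you reduce everything to finding a compact real structure $\tau$ commuting with the finite dihedral group generated by $\sigma$ and the grading automorphism $\phi$, which is the classical Cartan fixed-point/Karpelevich--Mostow argument, and the computations $\sigma\phi\sigma^{-1}=\phi^{-1}$ and $\theta\phi\theta^{-1}=\phi^{-1}$ are verified correctly. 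This is essentially a reconstruction of the proof behind the cited result, so it buys transparency at the cost of length, and it does lean on one nontrivial black box (the equivariant existence of $\tau$) that you correctly flag but do not prove. Part b) is identical to the paper: $\theta$-stability of $\g_0$ plus Lemma \ref{lemThInv}.
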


\begin{proof}
Part a)  is \cite[Thm 3.4(2)]{hongvanle}.  If $\theta$ is as in  a), then $\theta(\g_0)=\g_0$, and  Lemma \ref{lemThInv} proves that $\g_0$ and $\g_0^c$ are reductive in $\g$ and $\g^c$, respectively.
\end{proof}

\begin{lemma}\label{wtsp}
Let $\h_0^c$ be a Cartan subalgebra of $\g_0^c$. For each $i$, the weight spaces of 
$\h_0^c$ in $\g_i^c$, of nonzero weight, have dimension $1$.
\end{lemma}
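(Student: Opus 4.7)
The plan is to reduce the statement to a root-space calculation using a Cartan subalgebra of $\g^c$ that is compatible with the grading. The crucial structural step is to identify $\h^c := \fz_{\g^c}(\h_0^c)$ as a Cartan subalgebra of $\g^c$. Since $\g_0^c$ is reductive in $\g^c$ (Lemma~\ref{lemg0}b), $\h_0^c$ consists of semisimple elements of $\g^c$, so $\h^c$ is reductive; it is automatically graded (being the centraliser of a subspace of $\g_0^c$), and $\h^c \cap \g_0^c = \fz_{\g_0^c}(\h_0^c) = \h_0^c$ by Lemma~\ref{lemRedSA}c applied to $\g_0^c$. To see $\h^c$ is abelian, I would argue by contradiction: a nontrivial derived subalgebra $\fs = [\h^c,\h^c]$ would be a graded semisimple Lie algebra with $\fs_0 \subseteq \h_0^c$ abelian; choosing a Cartan of $\fs$ containing $\fs_0$ produces a root vector $x_\alpha \in \fs$ with $h_\alpha \in \h_0^c$ and $[h_\alpha,x_\alpha] = 2x_\alpha \neq 0$, contradicting $x_\alpha \in \h^c = \fz_{\g^c}(\h_0^c)$. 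Thus $\h^c$ is abelian, toral and self-centralising, i.e., a Cartan of $\g^c$.

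With $\h^c$ a Cartan of $\g^c$, write $\g^c = \h^c \oplus \bigoplus_{\alpha\in\Phi}\g_\alpha^c$ with one-dimensional root spaces. For a nonzero $\lambda \in (\h_0^c)^\ast$, the $\lambda$-weight space of $\h_0^c$ in $\g^c$ is
\[
V' = \bigoplus_{\alpha\in\Phi,\, \alpha|_{\h_0^c}=\lambda} \g_\alpha^c,
\]
and since $\h_0^c \subseteq \g_0^c$ respects the grading, $V' = \bigoplus_i (V' \cap \g_i^c)$; the weight space in $\g_i^c$ we wish to bound is exactly $V' \cap \g_i^c$.

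Finally, the automorphism $\theta$ of order $m$ inducing the grading fixes $\h_0^c$ pointwise (as $\h_0^c \subseteq \g_0^c$ is its $1$-eigenspace), so it preserves $V'$, and $V' \cap \g_i^c$ equals the $\zeta^i$-eigenspace of $\theta|_{V'}$ with $\zeta = \exp(2\pi\imath/m)$. On $V'$, the map $\theta$ permutes the one-dimensional root spaces $\g_\alpha^c$ via $\alpha \mapsto \alpha\circ\theta^{-1}$, and each $\theta$-orbit of size $k$ contributes $k$ distinct $m$-th-root-of-unity eigenvalues (a cyclic permutation with scalars on a $k$-dimensional space has $k$ distinct eigenvalues). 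The key dimensional constraint comes from the reductive structure of $\g_0^c$: for $i=0$, the weight space $V' \cap \g_0^c$ is a root space of the reductive $\g_0^c$ relative to its Cartan $\h_0^c$, hence of dimension at most one. Using this constraint together with the observation that two distinct roots $\alpha,\beta$ restricting to $\lambda$ cannot satisfy $\alpha-\beta \in \Phi$ (otherwise its one-dimensional root space would lie in the $0$-weight space of $\h_0^c$, namely $\h^c$, which is impossible), one bounds the $\theta$-orbit structure on the fiber and shows that the $m$-th-root-of-unity eigenvalues do not collide across orbits, yielding $\dim(V' \cap \g_i^c) \leq 1$ for every $i$, with equality whenever $\lambda$ actually occurs.

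The main obstacle is the final combinatorial step: while the reductive constraint on $\g_0^c$ forces $V' \cap \g_0^c$ to be at most one-dimensional (eliminating multiple orbits contributing to eigenvalue $1$), establishing non-collision of $\theta$-eigenvalues for $i\neq 0$ requires either a direct scalar analysis of $\theta$ on each orbit using the graded Cartan $\h^c$ or an appeal to Vinberg's theory of restricted root systems for graded semisimple Lie algebras.
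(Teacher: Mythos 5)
Your proposal does not reach a complete proof: you are in effect trying to reprove from scratch the statement that the paper simply quotes (for finite $m$ it invokes \cite[Lem.\ X.5.4(i)]{helgasson} after observing that the grading is the eigenspace decomposition of the order-$m$ automorphism $\varphi$; for $m=\infty$ it quotes the fact that $\h_0^c$ is already a Cartan subalgebra of $\g^c$), and the two places where your argument stops are exactly where the content of that quoted lemma lies. First, your proof that $\h^c=\fz_{\g^c}(\h_0^c)$ is abelian is broken: since every element of $\fs=[\h^c,\h^c]$ centralises $\h_0^c$ and $\fs_0=\fs\cap\g_0^c\subseteq\h_0^c$, the subalgebra $\fs_0$ is central in the semisimple algebra $\fs$ and hence zero; a Cartan subalgebra of $\fs$ ``containing $\fs_0$'' is therefore just an arbitrary Cartan subalgebra of $\fs$, and none of its coroots $h_\alpha$ has any reason to lie in $\h_0^c$, so the contradiction $[h_\alpha,x_\alpha]=2x_\alpha$ with $x_\alpha\in\fz_{\g^c}(\h_0^c)$ never materialises. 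What you actually need at this point is that a finite-order automorphism of a nonzero complex semisimple Lie algebra has a nonzero fixed-point subalgebra (applied to $\varphi|_\fs$, this forces $\fs=0$); that is a genuine theorem which your argument does not supply.

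Second, and more seriously, the final step is not done, as you yourself concede: after decomposing $V'=\bigoplus_{\alpha|_{\h_0^c}=\lambda}\g_\alpha^c$ into $\varphi$-orbits, each orbit of length $k$ contributes $k$ distinct eigenvalues, but to conclude $\dim(V'\cap\g_i^c)\leq 1$ you must show either that the fibre over $\lambda$ is a single $\langle\varphi\rangle$-orbit or that distinct orbits contribute disjoint eigenvalue sets. Your observation that $\alpha-\beta\notin\Phi$ for distinct $\alpha,\beta$ in the fibre is a correct first step but does not by itself control the orbit structure, and the reductive constraint you invoke only handles $i=0$; for $i\neq 0$ a nonzero weight space of $\h_0^c$ in a general $\g_0^c$-module can have arbitrarily large dimension, so the ambient structure of $\g^c$ must genuinely be used. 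Note also that for $m=\infty$ your eigenvalue formalism ($\zeta=\exp(2\pi\imath/m)$, finite-order automorphism) does not apply at all; that case needs the separate fact, used by the paper, that $\g_0^c$ then contains a Cartan subalgebra of $\g^c$. As it stands the proposal reduces the lemma to two unproved assertions rather than proving it.
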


\begin{proof}
If  $m=\infty$, then  $\h_0^c$ is a Cartan subalgebra of $\g^c$, see for example \cite[p.\ 370]{djoZ}, so that weight spaces are just root spaces, and therefore they have dimension 1, see \cite[Prop.\ 8.4]{hum}. If  $m$ is a positive integer and  $\omega\in \C$ is a primitive $m$-th
root of unity, then the linear map $\varphi\colon \g^c\to \g^c$ defined by $\varphi(x) =
\omega^i x$ for $x\in \g^c_i$ is  an automorphism of $\g^c$. Clearly, the $\Z_m$-grading of $\g^c$ coincides with the eigenspace decomposition of $\varphi$; now the assertion follows from \cite[Lem X.5.4(i)]{helgasson}.   
\end{proof}

The following lemma is well-known, see \cite[\S 1.4]{vinberg2}.

\begin{lemma}\label{lem:defelt}
If $\g = \bigoplus_{i\in \Z} \g_i$ is a $\Z$-graded real semisimple
Lie algebra, then  there is a unique \myem{defining element} $h_0\in
\g_0$ such that $\g_i=\{x\in \g \mid [h_0,x]=ix\}$ for all $i$.
\end{lemma}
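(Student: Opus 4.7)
The plan is to realise the grading as an inner derivation of $\g$ and then identify the defining element inside $\g_0$.

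First I would define $D\colon \g \to \g$ by $D(x) = i x$ for $x \in \g_i$, extended by linearity. The grading condition $[\g_i,\g_j] \subseteq \g_{i+j}$ translates directly into the Leibniz rule $D([x,y]) = [D(x),y] + [x,D(y)]$, so $D$ is a derivation of $\g$. Because $\g$ is a semisimple real Lie algebra, Whitehead's first lemma (valid over any field of characteristic $0$, see \cite[Ch.\ III]{jac}) implies every derivation is inner, so there exists $h_0 \in \g$ with $D = \ad h_0$.

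Next I would show $h_0 \in \g_0$. Decompose $h_0 = \sum_{i\in\Z} h_i$ with $h_i \in \g_i$; since $\ad(h_0)(h_0) = D(h_0) = 0$, on the other hand $D(h_0) = \sum_i i\, h_i$, and as the sum is direct, this forces $h_i = 0$ for $i \neq 0$. Hence $h_0 \in \g_0$. By construction, $[h_0,x] = ix$ for every $x \in \g_i$, so $\g_i$ is contained in the $i$-eigenspace $V_i = \{x \in \g \mid [h_0,x]=ix\}$ of $\ad h_0$. Since $\g = \bigoplus_i \g_i$ and the $V_i$ are disjoint, equality $\g_i = V_i$ follows.

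For uniqueness, suppose $h_0'$ is another element of $\g_0$ with $[h_0',x] = ix$ for all $x \in \g_i$ and all $i$. Then $h_0 - h_0'$ lies in the centre of $\g$, which is trivial because $\g$ is semisimple; hence $h_0 = h_0'$.

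The main (very mild) obstacle is simply the verification that the derivation $D$ lands inside the realm where Whitehead's lemma applies over $\R$; this is immediate since the lemma holds over any field of characteristic $0$, and the bookkeeping with the direct sum decomposition to force $h_0 \in \g_0$ is routine.
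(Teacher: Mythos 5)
Your proof is correct: the paper itself gives no argument for this lemma (it simply cites Vinberg, \S 1.4 of \cite{vinberg2}), and your route---realising the grading as the derivation $D(x)=ix$, invoking that all derivations of a semisimple Lie algebra in characteristic $0$ are inner, locating $h_0$ in $\g_0$ via $D(h_0)=[h_0,h_0]=0$, and getting uniqueness from triviality of the centre---is exactly the standard argument behind that reference. The only cosmetic issue is the clash between $h_0$ as the defining element and $h_0$ as the degree-zero component in your decomposition $h_0=\sum_i h_i$; rename one of them.
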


Here we do not try to classify the $\Z_m$-gradings
on a real semisimple Lie algebra $\g$ (for the case of $\Z$-gradings, see
\cite{djoZ}); instead we describe two standard constructions of 
$\Z_m$-gradings, which yield many interesting examples. In both cases, let $\g$ be the real subalgebra of $\g^c$ generated by a canonical generating set  $\mathcal{B}=\{h_i,x_i,y_i\mid i=1,\ldots,\ell\}$ of $\g^c$; then $\g$ is a \myem{split real form} of $\g^c$, see \cite[p.\ 17]{onishchik}, with complex conjugation $\sigma$ (fixing all $x_i$, $y_i$, and $h_i$), and Cartan involution $\theta$  defined by $\theta(h_i) = -h_i$, $\theta(x_i) = 
-y_i$, and $\theta(y_i) = -x_i$ for all $i$, see \cite[Exam.\ 3.2]{onishchik}. Let $\{h_1,\ldots,h_\ell,x_\alpha\mid \alpha\in\Phi\}$ be a Chevalley basis containing $\mathcal{B}$, with  $x_{\alpha_i} = x_i$ and $x_{-\alpha_i} =y_i$, for all simple roots $\alpha_i$. By construction, all $x_\alpha$ lie in $\g$.

\begin{example}\label{exa:grad}
We use the previous notation.
\begin{iprf}
\item We construct a $\Z$-grading and, for this purpose, define a degree of the roots: for each $\alpha_i$ choose
some  integer $d(\alpha_i)\geq0$; for a positive root $\alpha= \sum_i a_i \alpha_i$ define $d(\alpha) = \sum_i a_i d(\alpha_i)$ and $d(-\alpha) = -d(\alpha)$. Let $\g_0$ be the span of $h_1,\ldots,h_\ell$ along with the $x_\alpha$ satisfying  $d(\alpha) = 0$. For $i\in \Z$ define $\g_i$ as the span of all $x_\alpha$ with $d(\alpha) = i$. Then $\g=\bigoplus_{i\in\Z} \g_i$ is a $\Z$-grading with  $\theta(\g_i) = \g_{-i}$ for every $i$.
\item Now let $m\geq 1$ be an integer and $\omega\in\C$ a primitive $m$-th root of unity. Let $\pi$ be a permutation of $\{1,\ldots,\ell\}$, of 
order 1 or 2, such that $\my{\alpha_i}{\alpha_j}
=\my{\alpha_{\pi(i)}}{\alpha_{\pi(j)}}$ for $1\leq i,j\leq \ell$, that
is,  $\pi$ defines a diagram automorphism and a bijection
$\Phi\to\Phi$, also denoted $\pi$. We require that $m$ is even if
$\pi$ has  order  2. Mapping $(x_i,y_i,h_i)$ to $(x_{\pi(i)},y_{\pi(i)}, h_{\pi(i)})$ for all $i$ defines an automorphism of 
$\g^c$, also denoted $\pi$. If $k_1,\ldots,k_\ell$ are non-negative integers with $k_i = k_{\pi(i)}$ for all $i$, then $(x_i,y_i,h_i)\mapsto (\omega^{k_i} ,\omega^{-k_i}y_i,h_i)$ for all $i$ defines an inner automorphism $\eta$ of $\g^c$ which commutes with $\pi$. Suppose the $k_i$ are chosen such that $\eta$ has order $m$, thus $\varphi=\pi\circ \eta$ is an automorphism of $\g^c$ of order
$m$.

If  $\alpha = \sum_j a_j \alpha_j$, then $\eta(x_\alpha) = \omega^r x_\alpha$ where $r=\sum_j a_j k_j$, and each $x_\alpha$ is an eigenvector of $\eta$. Thus, the eigenspace $V_i$ of $\eta$ with eigenvalue $\omega^i\ne 1$ is spanned by all $x_\alpha$ with $\eta(x_\alpha)=\omega^i x_{\alpha}$; the 1-eigenspace $V_0$ of $\eta$ is spanned by $h_1,\ldots,h_\ell$ and $x_\alpha$ with $\eta(x_\alpha)=x_\alpha$.  This and the definition of $\theta$ imply that $\theta(V_i)=V_{-i}$ for all $i$.  Since $x_\alpha\in\g$ for all roots $\alpha$, we also have that $\sigma(V_i)=V_i$ for all $i$.

Let $\g^c=\g^c_+\oplus\g^c_-$ be the $\pm1$-eigenspace decomposition of $\pi$. By construction, $\pi$ and $\theta$ commute, hence $\theta$ fixes $\g_+^c$ and $\g_-^c$. Also, $\pi(x_\alpha)=x_{\pi(\alpha)}$ for every $\alpha\in\Phi$, and it is easy to see that there exist bases of  $\g^c_+$ and $\g^c_-$ which are fixed by $\sigma$, hence $\sigma(\g_+^c)=\g_+^c$ and $\sigma(\g_-^c)=\g_-^c$.

Let $\g^c_i$ be the eigenspace of $\varphi$ with eigenvalue $\omega^i$ for $i=0,1,\ldots,m-1$. Since $\pi$ and $\eta$ commute, $\eta(\g_\pm^c)=\g_\pm^c$ and $\pi(V_i)=V_i$ for all $i$, hence $V_i=(V_i\cap \g^c_+)\oplus (V_i\cap \g^c_-)$. Clearly, $V_i\cap \g_+^c \leq \g_i^c$ and either  $\g_-^c=\{0\}$, or $m$ is even and $V_i\cap \g_-^c \leq \g_{i+m/2}^c$; note that in the latter case $\omega^{m/2}=-1$. Thus, either $\pi=1$ and $\g_i^c=V_i$, or $\pi$ has order 2, $m$ is even, and \[\g_i^c=(\g_+^c\cap V_i) \oplus (\g_-^c\cap V_{m/2+i}).\] Since $\theta(V_i)=V_{-i}$ for all $i$, and $\g_-^c$ and $\g_+^c$ are fixed by $\theta$, in both cases we have $\theta(\g_i^c)=\g_{-i}^c$ and $\sigma(\g_i^c)=\g_i^c$ for all $i$. In particular, if $\g_i=\g_i^c\cap \g$, then $\g=\bigoplus_{i\in\Z_m} \g_i$ and $\theta(\g_i)=\g_{-i}$ for all $i$.
\end{iprf}
\end{example}

\subsection{A representation associated with the grading}\label{sec:rho}
Let $\g=\bigoplus_{i\in\Z_m} \g_i$ be a real semisimple Lie algebra with 
complexification  $\g^c=\bigoplus_{i\in\Z_m} \g_i^c$. It is customary to associate
a representation of an algebraic group to the grading of $\g^c$, see \cite{vinberg,vinberg2}. Namely, as in Section \ref{sec:notation}, let $G^c$ be the adjoint group of 
$\g^c$, and define $G_0^c$ as the connected
algebraic subgroup of $G^c$ with Lie algebra $\ad_{\g^c}(\g_0^c)$. This group acts on $\g_1^c$, yielding a representation \[\rho^c \colon G_0^c \to \GL(\g_1^c).\]
We note that the differential of this representation is $\mathrm{d}\rho^c \colon \g_0^c \to
\gl(\g_1^c)$, given by $\mathrm{d}\rho^c (x)(y) =[x,y]$. 
In the literature, $G_0^c$ is called a ``$\theta$-group'' and $\rho^c$ a 
``$\theta$-representation'', but we avoid using this terminology here 
as we already use ``$\theta$'' to denote a Cartan involution.

Again, as in Section \ref{sec:notation}, let $G$ be the adjoint group
of $\g$. Note that $G_0^c$ and $\rho^c$ are defined over $\R$, so if we define
$G_0$ to be the Lie subgroup of $G$ with Lie algebra $\ad_\g (\g_0)$, then
$G_0 = G_0^c(\R)=\{g\in G^c\mid g(\g)=\g\}$. Furthermore, we can restrict
$\rho^c$ to $G_0$ to obtain a representation $\rho \colon G_0 \to \GL(\g_1)$\label{refrho}.

\section{Listing semisimple regular $\Z$-graded subalgebras}\label{sec:list}

\noindent Let $\g=\bigoplus_{i\in\Z_m} \g_i$ be a real semisimple Lie algebra with 
complexification  $\g^c=\bigoplus_{i\in\Z_m} \g_i^c$. By Lemma \ref{lemg0}, throughout this section,
we suppose that the Cartan involution $\theta$ of $\g$ satisfies \[\theta(\g_i)
= \g_{-i}\quad\text{for all $i$}.\]
A \myem{$\Z$-graded subalgebra}  $\fs^c$ of $\g^c$ is a subalgebra with $\Z$-grading 
$\fs^c = \bigoplus_{k\in \Z} \fs^c_k$ such that $\fs^c_k \subset \g^c_{k \bmod m}$ for all $k$; here $k\bmod m=k$ if $m=\infty$. It is a  \myem{regular subalgebra} of $\g^c$ if it is normalised by some Cartan subalgebra $\h_0^c$ of $\g_0^c$. If we want to specify the particular Cartan subalgebra, then we say that  $\fs^c$ is $\h^c_0$-regular. We  define the same concepts for subalgebras of $\g$. The following is an easy observation.

\begin{lemma}\label{lemStabFSi}
Let $\fs^c\leq \g^c$ be an $\h_0^c$-regular subalgebra, where $\h_0^c\leq \g_0^c$ is a Cartan subalgebra. If $\h_0^c$ contains the (unique) defining elements $h$ of $\fs^c$, then $[\h_0^c,\fs^c_i]\subseteq \fs_i^c$ for all $i$.
\end{lemma}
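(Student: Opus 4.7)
The plan is to exploit the fact that $\fs^c_i$ is characterised as an eigenspace of $\ad(h)$ inside $\fs^c$, together with the fact that $\h_0^c$ is abelian. Specifically, by Lemma \ref{lem:defelt} applied to the $\Z$-graded semisimple Lie algebra $\fs^c$, the defining element $h$ satisfies $\fs^c_i = \{y\in\fs^c \mid [h,y]=iy\}$ for every $i$. Thus, for $t\in\h_0^c$ and $x\in \fs^c_i$, it suffices to verify two things: (i) $[t,x]\in\fs^c$, and (ii) $[h,[t,x]] = i[t,x]$.

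Claim (i) is immediate from the hypothesis that $\fs^c$ is $\h_0^c$-regular, that is, $[\h_0^c,\fs^c]\subseteq \fs^c$. For (ii) I would use the Jacobi identity, which gives
\[
[h,[t,x]] = [[h,t],x] + [t,[h,x]].
\]
Here $h\in \h_0^c$ by hypothesis and $t\in\h_0^c$. Since $\g_0^c$ is reductive in $\g^c$ by Lemma \ref{lemg0}b), a Cartan subalgebra of $\g_0^c$ is abelian (it is the direct sum of the centre of $\g_0^c$ with a Cartan subalgebra of the semisimple derived subalgebra), so $[h,t]=0$. On the other hand, $[h,x] = ix$ because $x\in\fs^c_i$. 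Substituting yields $[h,[t,x]] = [t,ix] = i[t,x]$, as required.

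There is essentially no obstacle; the only point that must be checked carefully is that $\h_0^c$ is abelian, which follows from its being a Cartan subalgebra of the reductive Lie algebra $\g_0^c$. Once that is noted, the Jacobi identity computation is a one-liner, and the conclusion $[\h_0^c,\fs^c_i]\subseteq \fs^c_i$ follows for every $i\in\Z$.
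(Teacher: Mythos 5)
Your proof is correct and follows essentially the same route as the paper: both use $\h_0^c$-regularity to get $[t,x]\in\fs^c$, then the Jacobi identity together with $[h,t]=0$ (as $h,t$ lie in the abelian subalgebra $\h_0^c$) to show $[t,x]$ is an $i$-eigenvector of $\ad(h)$, hence lies in $\fs_i^c$. Your extra remark justifying that $\h_0^c$ is abelian is a harmless elaboration of a point the paper leaves implicit.
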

\begin{proof}
If $k\in \h_0^c$, then $[k,\fs^c]\subseteq \fs^c$ since $\fs^c$ is $\h_0^c$-regular. Thus,  if $x\in\fs_i^c$, then $[h,[k,x]]=-[k,[x,h]]-[x,[h,k]]=[k,[h,x]]=i[k,x]$, and  $[k,x]\in\fs_i^c$ follows. Hence, $[\h_0^c,\fs_i^c]\subseteq \fs_i^c$ for all $i$.
\end{proof}

 In this section, we describe an algorithm for the following task: Given a semisimple $\Z$-graded regular subalgebra $\fa^c$ of $\g^c$, list,
up to $G_0$-conjugacy, all semisimple $\Z$-graded regular subalgebras 
$\fs$ of $\g$ such that $\fs^c$ is $G_0^c$-conjugate to $\fa^c$. First we introduce some notation. Let $\h_0$ be a Cartan subalgebra of $\g_0$, so $\h_0^c$ is a Cartan subalgebra
of $\g_0^c$. For $\lambda\in (\h_0^c)^\ast$ and $k\in \Z_m$ define
\[\g^c_{k,\lambda} = \{ x \in \g_k^c \mid \forall h\in\h_0^c\colon [h,x] = \lambda(h) x\}.\]
By Lemma \ref{wtsp}, if $\lambda\neq 0$, then $\g^c_{k,\lambda}=\{0\}$
or $\g^c_{k,\lambda}$  has dimension 1. Let  \[P(\g^c)=\{(k,\lambda)\mid k\in\Z_m,\lambda\in(\h_0^c)^\ast,\lambda\neq 0,\g_{k,\lambda}^c\ne \{0\}\}.\]  Let
$\fs$ be an $\h_0$-regular $\Z$-graded semisimple subalgebra of $\g$, and set $\fs^c_{k,\lambda}=\g^c_{k,\lambda}\cap \fs^c$ for all $k$ and $\lambda$.
If $\fs^c_{k,\lambda}\ne\{0\}$, then $\fs^c_{k,\lambda}$ is a \myem{weight space} of $\fs^c$ of \myem{weight} $(k,\lambda)$. 
Since $\fs^c$ is $\h_0^c$-regular, it is the sum of its weight spaces. Let \[P(\fs^c)\subseteq P(\g^c)\] be the set of all weights $(k,\lambda)$ of $\fs^c$ with
$\lambda\neq 0$.  Since $[\fs^c_{k,\lambda},\fs^c_{l,\mu}]\subseteq
\fs^c_{k+l,\lambda+\mu}$, weights are added componentwise:
$(k,\lambda)+(l,\mu) = (k+l,\lambda+\mu)$. If  $\kappa$ is  the
Killing form of $\fs^c$, then $\kappa(\fs_{k,\lambda}^c,\fs^c_{l,\mu})
= 0$ unless $l=-k, \mu=-\lambda$. As $\kappa$ is nondegenerate, we
have \[(k,\lambda)\in P(\fs^c)\iff (-k,-\lambda)\in P(\fs^c).\]
Let $W_0^c = N_{G_0^c}(\h_0^c)/Z_{G_0^c}(\h_0^c)$ be the
Weyl group of $\g_0^c$ relative to $\h_0^c$.
The group $W_0^c$ acts on $P(\g^c)$ as follows: if $w\in W_0^c$ with $g\in N_{G_0^c}(\h_0^c)$ projecting to $w$, then  \[w\cdot (k,\lambda) = (k,\lambda^{g});\] recall that $\lambda^g=\lambda\circ g^{-1}$.  
Note that $W_0^c$ is the Weyl group of the root system of $\g_0^c$
relative to $\h_0^c$, see Theorem \ref{thm:W}; hence,  by Lemma \ref{lem:wh}, we know how $W_0^c$ acts on 
$(\h_0)^*$ without computing a $g\in G_0^c$ for a given $w\in W_0^c$. Similarly, $W_0(\h_0) = 
N_{G_0}(\h_0)/Z_{G_0}(\h_0)$ is the real Weyl group relative to $\h_0$; note that $W_0(\h_0)\leq W_0^c$, see Section \ref{sec:realweyl}.

\begin{proposition}\label{propThStab}
Let $\h_0$ be a $\theta$-stable Cartan subalgebra of $\g_0$, and let
$\fs$ be an $\h_0$-regular $\Z$-graded semisimple subalgebra of
$\g$. Then the following hold. 
\begin{ithm}
\item The algebra $\fs$ is $\theta$-stable.
\item The normaliser $\fn_{\g_0}(\fs) = \{ x \in \g_0 \mid [x,\fs]\subseteq \fs \}$  is reductive in $\g$. 
\end{ithm}
\end{proposition}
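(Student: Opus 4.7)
For part (a), I would show $\theta(\fs^c) = \fs^c$; this implies $\theta(\fs) = \fs$ since $\fs = \fs^c \cap \g$. The approach is to analyse how $\theta$ acts on the $\h_0^c$-weight decomposition of $\fs^c$. Using the notation of Section~\ref{sec:list}, write $\fs^c = \bigoplus_{(k,\lambda)} \fs^c_{k,\lambda}$ with $(k,\lambda)$ ranging over $P(\fs^c)$ together with the zero-weight spaces $\fs^c \cap \g^c_{k,0}$. Since $\fs$ is a real form of $\fs^c$, $\sigma(\fs^c) = \fs^c$, and by nondegeneracy of the Killing form on the semisimple $\fs^c$, the symmetry $(k,\lambda) \in P(\fs^c) \iff (-k,-\lambda) \in P(\fs^c)$ recorded in the text holds.

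The key input is the behaviour of $\sigma$ on weights. By Lemma~\ref{rr1}(b), $\h_0 = (\h_0 \cap \fk) \oplus (\h_0 \cap \fp)$ is the $\pm 1$-eigenspace decomposition of $\theta|_{\h_0}$, and elements of these summands have purely imaginary, resp.\ real, $\ad_\g$-eigenvalues. A direct case check therefore yields $\overline{\lambda(h)} = -\lambda(\theta(h))$ for every $h \in \h_0$ and every weight $\lambda$, so the $\sigma$-conjugate weight is $\bar\lambda = -\lambda \circ \theta$ in $(\h_0^c)^*$. Combining $\sigma(\fs^c) = \fs^c$ with the Killing-form symmetry, $P(\fs^c)$ is closed under $(k,\lambda) \mapsto (-k, \lambda \circ \theta)$, which is precisely the map induced on weights by $\theta$. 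Since $\theta(\fs^c_{k,\lambda}) \subseteq \g^c_{-k, \lambda \circ \theta}$ and Lemma~\ref{wtsp} makes each nonzero-weight space of $\g^c$ one-dimensional, this forces $\theta(\fs^c_{k,\lambda}) = \fs^c_{-k, \lambda \circ \theta} \subseteq \fs^c$ for every $(k,\lambda) \in P(\fs^c)$.

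The main obstacle is the zero-weight component $\fs^c \cap \fz_{\g^c}(\h_0^c)$. My plan is to show that $\fs^c$ is generated as a Lie algebra by its nonzero-weight part $\fs^c_+ = \bigoplus_{(k,\lambda) \in P(\fs^c)} \fs^c_{k,\lambda}$; then $\theta$-stability of $\fs^c_+$ propagates to all of $\fs^c$ via brackets. The subalgebra generated by $\fs^c_+$ is an ideal of $\fs^c$ (being preserved by $\ad \fs^c$), so by semisimplicity and Weyl's theorem the only possible obstruction is a semisimple ideal of $\fs^c$ contained entirely in $\fz_{\g^c}(\h_0^c)$. In the $\Z$-graded case ($m=\infty$), $\h_0^c$ is a Cartan subalgebra of $\g^c$ (as cited in the proof of Lemma~\ref{wtsp}) and $\fz_{\g^c}(\h_0^c) = \h_0^c$ is abelian, so no such ideal can occur; $\fs^c \cap \h_0^c$ is then a Cartan of $\fs^c$, spanned by coroots $[x_\alpha, x_{-\alpha}]$. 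Ruling out the obstruction in the general $\Z_m$-graded setting is where I expect the technical work to lie.

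Part (b) is formal from (a). By Lemma~\ref{lemg0}(a), $\g_0$ is $\theta$-stable, and by (a) so is $\fs$. For $x \in \fn_{\g_0}(\fs)$, $[\theta(x), \fs] = [\theta(x), \theta(\fs)] = \theta([x,\fs]) \subseteq \theta(\fs) = \fs$ and $\theta(x) \in \g_0$, so $\theta(x) \in \fn_{\g_0}(\fs)$. Hence $\fn_{\g_0}(\fs)$ is a $\theta$-stable subalgebra of $\g$, so reductive in $\g$ by Lemma~\ref{lemThInv}(a).
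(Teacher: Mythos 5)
Your treatment of the nonzero-weight spaces is exactly the paper's proof of part a): you compute that $\sigma$ sends the weight $(k,\lambda)$ to $(k,-\lambda\circ\theta)$ while $\theta$ sends it to $(-k,\lambda\circ\theta)$, combine $\sigma$-stability of $\fs^c$ with the Killing-form symmetry $(k,\lambda)\in P(\fs^c)\iff(-k,-\lambda)\in P(\fs^c)$, and use the one-dimensionality of nonzero weight spaces (Lemma~\ref{wtsp}) to get $\theta(\fs^c_{k,\lambda})=\fs^c_{-k,\lambda\circ\theta}\subseteq\fs^c$; your part b) is also verbatim the paper's. The one place you diverge is that you flag the zero-weight component $\fs^c\cap\fz_{\g^c}(\h_0^c)$ as a genuine issue. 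The paper's proof is silent on it and concludes $\theta$-stability directly after handling the nonzero weights, so it is implicitly using exactly the generation statement you formulate (indeed, elsewhere in Section~\ref{sec:list} the paper treats an $\h_0^c$-regular semisimple $\Z$-graded subalgebra as determined by its nonzero weight system $P(\fs^c)$, which presupposes it). Your care here is warranted rather than misplaced.

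The ``technical work'' you anticipate for finite $m$ amounts to a citation, so your plan does close. For $m<\infty$ the grading is the eigenspace decomposition of a finite-order, hence semisimple, automorphism $\varphi$ with $\g_0^c=(\g^c)^\varphi$, and for such an automorphism the centraliser $\fz_{\g^c}(\h_0^c)$ of a Cartan subalgebra $\h_0^c$ of $\g_0^c$ is a ($\varphi$-stable) Cartan subalgebra of $\g^c$; this belongs to the same circle of results in \cite[Ch.~X, \S 5]{helgasson} that Lemma~\ref{wtsp} already invokes. Hence $\fz_{\g^c}(\h_0^c)$ is abelian for every $m$, a nonzero semisimple ideal of $\fs^c$ cannot lie inside it, and your ideal argument finishes the proof. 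One small step worth making explicit there: to see that the complementary ideal $\mathfrak{j}'$ of $\langle\fs^c_+\rangle$ really lies in the zero-weight part, note that each $\ad(h)|_{\fs^c}$ with $h\in\h_0^c$ is a derivation of the semisimple $\fs^c$, hence inner, hence preserves every ideal of $\fs^c$; so $\mathfrak{j}'$ decomposes into $\h_0^c$-weight spaces, and its nonzero-weight components lie in $\mathfrak{j}'\cap\langle\fs^c_+\rangle=\{0\}$.
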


\begin{proof}
\begin{iprf}
\item
Recall that $\g=\fk\oplus\fp$ and $\h_0=(\h_0\cap\fk)\oplus (\h_0\cap\fp)$, and note that $\sigma$ leaves $\fs$ and $\fs^c$ invariant. Let $(k,\lambda)$ be a weight of $\fs^c$ with $\lambda\neq 0$, thus $\fs^c_{k,\lambda}=\g^c_{k,\lambda}$, and define the linear map $\mu\colon \h_0^c\to\C$ by 
$$ \mu(h) = \begin{cases} \lambda(h) & \text{ if } h\in \h^c_0\cap \fk^c\\
-\lambda(h) & \text{ if } h\in \h^c_0\cap \fp^c.\end{cases}$$
Note that $\sigma$ maps $\g^c_k$ to itself whereas $\theta(\g^c_k) = \g^c_{-k}$.
Using this and Lemma \ref{rr1}, we see that $\sigma(\fs^c_{k,\lambda}) = \g^c_{k,-\mu}$ and  $\theta(\fs^c_{k,\lambda}) = \g^c_{-k,\mu}$. Since $\sigma(\fs^c)= \fs^c$, we conclude that $(k,-\mu)\in P(\fs^c)$ and  $\g^c_{k,-\mu}=\fs_{k,-\mu}^c$. From what is said above, $(-k,\mu)\in P(\fs^c)$, hence $\theta(\fs^c_{k,\lambda})=\fs_{-k,\mu}^c$. Therefore, $\fs^c$, and hence $\fs$, is $\theta$-stable.
\item Since $\fs$ is $\theta$-stable, the same holds for
  $\fn_{\g_0}(\fs)$, and the assertion follows from  Lemma \ref{lemThInv}; recall that $\theta(\g_i)=\g_{-i}$ for all $i$, hence $\g_0$ is $\theta$-stable by assumption.
\end{iprf}
\end{proof}

\begin{definition} Let $\h_0\leq\g_0$ be a Cartan subalgebra. An $\h_0$-regular subalgebra $\fs\leq \g$ is \myem{strongly $\h_0$-regular} if $\h_0$ is maximally noncompact in $\fn_{\g_0}(\fs)$.
\end{definition}
Proposition \ref{propThStab} shows that  $\g_0$ and $\fn_{\g_0}(\fs)$
both are reductive in $\g$. Thus, by Lemma \ref{lemMNCCSA}, there is, up to conjugacy, a unique maximally noncompact Cartan subalgebra of $\fn_{\g_0}(\fs)$.

We end this section with another useful result on $\Z$-graded semisimple subalgebras. 

\begin{lemma}\label{lem:ecen1}
If $\fs\leq \g$ is a $\Z$-graded semisimple subalgebra, then $\fz_{\g_0}(\fs)$ is reductive in $\g$.
\end{lemma}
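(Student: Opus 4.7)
\noindent\emph{Proof plan.} The plan is to complexify and apply Lemma \ref{lemRedSA}b) to the subalgebra $\fz_{\g_0^c}(\fs^c) = (\fz_{\g_0}(\fs))^c$ of $\g^c$, transferring back to $\R$ via Lemma \ref{lemThInv}b). This reduces the task to verifying two properties of $\fz_{\g_0^c}(\fs^c)$: closure under the Jordan decomposition of $\g^c$, and nondegeneracy of the restriction of the Killing form $\kappa$ of $\g^c$.

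Closure under Jordan decomposition should be routine. If $x\in\fz_{\g_0^c}(\fs^c)$ has Jordan decomposition $x=x_s+x_n$ in $\g^c$, then $\ad x_s$ and $\ad x_n$ are polynomials in $\ad x$ without constant term, so they annihilate whatever $\ad x$ annihilates; hence $x_s,x_n\in\fz_{\g^c}(\fs^c)$. Since $\g_0^c$ is reductive in $\g^c$ by Lemma \ref{lemg0}b), it contains its own Jordan parts, so in fact $x_s,x_n\in\fz_{\g_0^c}(\fs^c)$.

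The main step, and where I would expect the main obstacle to lie, is the nondegeneracy of $\kappa$ on $\fz_{\g_0^c}(\fs^c)$; my plan is to establish it in two stages, first on the larger space $\fz_{\g^c}(\fs^c)$ and then to refine using the grading. Since $\fs^c$ is semisimple, Weyl's theorem decomposes $\g^c$ as a completely reducible $\fs^c$-module, and the $\fs^c$-invariance of $\kappa$ pairs each irreducible with its dual; the trivial isotypic component $\fz_{\g^c}(\fs^c)$ is therefore $\kappa$-orthogonal to the sum of the nontrivial isotypic components, and nondegeneracy of $\kappa$ on $\g^c$ then yields nondegeneracy on $\fz_{\g^c}(\fs^c)$. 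For the refinement, $\fz_{\g^c}(\fs^c)$ inherits the $\Z_m$-grading of $\g^c$: for homogeneous $y\in\fs^c_l$ and $x=\sum_k x_k$ with $x_k\in\g^c_k$, the brackets $[x_k,y]\in\g^c_{k+l\bmod m}$ lie in distinct homogeneous pieces as $k$ varies modulo $m$, so $[x,y]=0$ forces each $[x_k,y]=0$. Combining this with the orthogonality $\kappa(\g^c_k,\g^c_l)=0$ whenever $k+l\not\equiv 0\pmod m$, the form $\kappa$ pairs the degree-zero piece $\fz_{\g_0^c}(\fs^c)$ nondegenerately with itself. With both hypotheses of Lemma \ref{lemRedSA}b) then verified, Lemma \ref{lemThInv}b) finishes the proof over $\R$.
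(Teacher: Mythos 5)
Your proof is correct and follows essentially the same route as the paper's: nondegeneracy of the Killing form on the full centraliser via complete reducibility of $\g$ as an $\fs$-module, restriction to the degree-zero piece using the orthogonality $\kappa(\g_k,\g_l)=0$ for $k+l\not\equiv 0 \pmod m$, closure under Jordan decomposition, and then Lemma \ref{lemRedSA}b). The only substantive difference is how the Jordan parts are placed in degree zero: you invoke the fact (true and standard, but not stated in the paper) that a subalgebra reductive in $\g^c$ contains the Jordan components of its elements, whereas the paper deduces this from the uniqueness of the Jordan decomposition together with the automorphism defining the grading; your complexification detour is harmless but unnecessary, since Lemma \ref{lemRedSA}b) is already valid over $\R$.
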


\begin{proof}
We first show that $C=\fz_{\g}(\fs)$ is reductive in $g$. Let $\kappa$ be the Killing form of $\g$; we consider $\g$ as an $\fs$-module. By Weyl's
  Theorem, see \cite[Thm 4.4.6]{deGraafBook}, the submodule
  $C$ has a complement, say $\g=C\oplus
  U$. Similarly, $[\fs,U]$ is a submodule of $U$ and there is a
  submodule $V$ with  $U=[\fs,U]\oplus V$, hence
  $[\fs,V]=0$. Now $V\subseteq U\cap C=\{0\}$ proves
  $[\fs,U]=U$, thus every $v\in U$ has the form $v=[u,v']$ with
  $v'\in U$ and $u\in\fs$. If $y\in C$, then
  $\kappa(y,v)=\kappa(y,[u,v'])=\kappa([y,u],v')=0$, thus $\kappa|_{C\times U}=0$. Since $\kappa$ is nondegenerate, this implies that  the
  restriction of $\kappa$ to $C$ must be  nondegenerate. Let $g\in C$ with Jordan decomposition $g=s+n$ in $\g$.  Since $g$ centralises $\fs$, so do $s$ and $n$, see \cite[Prop.\ A.2.6]{deGraafBook}, thus $s,n\in C$. It follows from  Lemma \ref{lemRedSA} that $C$ is reductive in $\g$. 

The next step is to show that $C_0=\fz_{\g_0}(\fs)$ is reductive in $\g$. For $i\geq 0$ define $C_i=C\cap \g_i$, 
and $D=\bigoplus_{i\in\Z_m} C_i$; clearly, $D\leq C$. Write $c\in C$ as $c=\bigoplus_{i\in\Z_m} c_i$ with $c_i\in\g_i$. Since $c\in C$, we have $0=[c,s]=\bigoplus_{i\in\Z_m}[c_i,s]$ for all $s\in\fs_j$, and $[c_i,s]\in\g_{i+j\bmod m}$ implies that $[c_i,s]=0$ for all $i$. Since this holds for all $s\in \fs_j$, it follows that  $c_i\in C_i$. We get $C=D$, and  $C$ is a reductive subalgebra of $\g$ with the inherited $\Z_m$-grading. As noted above, the restriction of $\kappa$ to $C$ is 
nondegenerate. If $a\in C_0$ and $b\in C_i$, then  $\ad_{\g}(a)\circ\ad_{\g} (b)$ 
maps $\g_k$ into $\g_{k+i}$, thus $\kappa(a,b)=0$; this implies that the 
restriction of $\kappa$ to $C_0$ is nondegenerate. Let $\varphi$ be the automorphism of $\g^c$ associated with the grading; in particular, note that $\g_0=\{x\in \g\mid \varphi(x)=x\}$. If $g\in C_0$ has Jordan decomposition $g=n+s$ in $\g$, then $n+s=g=\varphi(g)=\varphi(n)+\varphi(s)$, and the uniqueness of Jordan decomposition proves $n=\varphi(n)$ and $s=\varphi(s)$, hence $n,s\in \g_0$. As above, $n,s\in C$, thus $n,s\in C_0$, and Lemma \ref{lemRedSA} proves the assertion.
\end{proof}

\subsection{Computing strongly regular $\Z$-graded subalgebras} We are ready to state some algorithms based on the previous results. Throughout this section, we continue with the assumptions that  $\theta$ is a Cartan involution with $\theta(\g_i) = \g_{-i}$ for all $i$; let $\g=\fk\oplus\fp$ be the corresponding Cartan decomposition, and let $\h_0$ be a $\theta$-stable Cartan subalgebra of $\g_0$.

The next proposition yields an algorithm to decide whether two $\Z$-graded semisimple strongly $\h_0$-regular subalgebras are $G_0$-conjugate.

\begin{proposition}\label{strongconj}
Let $\h_0$ be a $\theta$-stable Cartan subalgebra of $\g_0$ with real Weyl group $W_0(\h_0)$. Let $\fs$ and $\fs'$ be $\Z$-graded semisimple strongly $\h_0$-regular
subalgebras of $\g$. Then $\fs$ and $\fs'$ are $G_0$-conjugate if and only if
$P(\fs^c)$ and $P((\fs')^c)$ are in the same $W_0(\h_0)$-orbit.
\end{proposition}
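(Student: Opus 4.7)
The plan is to prove each direction of the equivalence.

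For the "only if" direction, assume $g(\fs) = \fs'$ for some $g \in G_0$. Conjugation by $g$ carries $\fn_{\g_0}(\fs)$ onto $\fn_{\g_0}(\fs')$, both of which are reductive in $\g$ by Proposition \ref{propThStab}(b), and it sends $\h_0$ to a maximally noncompact Cartan subalgebra of $\fn_{\g_0}(\fs')$. By Lemma \ref{lemMNCCSA}(c) applied to $\fn_{\g_0}(\fs')$, there exists $h$ in the adjoint group of $\fn_{\g_0}(\fs')$ with $h(g(\h_0)) = \h_0$; since $h$ arises from $\exp(\ad x)$ for $x \in \fn_{\g_0}(\fs') \subseteq \g_0$, it belongs to $G_0$ and preserves $\fs'$. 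Hence $hg \in N_{G_0}(\h_0)$ with $hg(\fs) = \fs'$, and its image $w \in W_0(\h_0)$ satisfies $w \cdot P(\fs^c) = P((\fs')^c)$ because $hg$ permutes weight spaces $\g^c_{k,\lambda}$ according to the $W_0^c$-action on weights.

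For the "if" direction, pick $w \in W_0(\h_0)$ with $w \cdot P(\fs^c) = P((\fs')^c)$ and lift $w$ to some $g \in N_{G_0}(\h_0)$. Conjugation by $g$ preserves $\h_0$ and intertwines normalisers, so $g(\fs)$ is again strongly $\h_0$-regular with $P((g\fs)^c) = P((\fs')^c)$; replacing $\fs$ by $g(\fs)$ reduces us to the case $P(\fs^c) = P((\fs')^c)$. By Lemma \ref{wtsp} each weight space $\g^c_{k,\lambda}$ with $\lambda \neq 0$ is one-dimensional, so the nonzero-$\lambda$ weight spaces of $\fs^c$ and $(\fs')^c$ both equal the corresponding $\g^c_{k,\lambda}$ and hence coincide.

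It remains to show that the zero-$\lambda$ components of $\fs^c$ and $(\fs')^c$ also coincide; this is the main obstacle. The key point is that strong $\h_0$-regularity, combined with the uniqueness statement of Lemma \ref{lemMNCCSA} applied to the reductive algebra $\fn_{\g_0}(\fs)$, allows us to place a Cartan subalgebra $\h_s^c$ of the semisimple algebra $\fs^c$ inside $\h_0^c$, so that no nonzero root of $\fs^c$ vanishes on $\h_0^c$. Then the zero-weight component of $\fs^c$ is exactly its Cartan $\h_s^c$, which is spanned by the brackets $[\fs^c_{k,\lambda}, \fs^c_{-k,-\lambda}]$ as $(k,\lambda)$ ranges over $P(\fs^c)$; the same description holds for $(\fs')^c$. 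Since these brackets are computed inside the ambient $\g^c$ and the two algebras share identical nonzero weight spaces, their zero-weight parts agree, giving $\fs^c = (\fs')^c$. As both subalgebras are $\sigma$-stable, we conclude $\fs = \fs^c \cap \g = (\fs')^c \cap \g = \fs'$, which finishes the proof.
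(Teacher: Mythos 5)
Your overall strategy coincides with the paper's. The ``only if'' direction is exactly the paper's argument: strong $\h_0$-regularity of both subalgebras plus the uniqueness, up to conjugacy in the adjoint group (Lemma \ref{lemMNCCSA}c)), of maximally noncompact Cartan subalgebras of the reductive algebra $\fn_{\g_0}(\fs')$ lets you correct the conjugating element into $N_{G_0}(\h_0)$ and then read off the action on $P(\fs^c)$; that part is fine.

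In the ``if'' direction you rightly isolate the point the paper leaves implicit --- that a regular semisimple $\Z$-graded subalgebra is determined by its set of nonzero weights --- and your target identity, that the zero-weight part of $\fs^c$ equals $\sum_{(k,\lambda)\in P(\fs^c)}[\fs^c_{k,\lambda},\fs^c_{-k,-\lambda}]$, is correct. But the justification you offer is a non sequitur. Strong $\h_0$-regularity and Lemma \ref{lemMNCCSA} concern maximally noncompact Cartan subalgebras of \emph{real} reductive algebras; they carry no information about the complex weight-space structure, and they do not ``place a Cartan subalgebra of $\fs^c$ inside $\h_0^c$'' (for finite $m$ the zero-weight space $\fz_{\fs^c}(\h_0^c)$ need not even lie in $\h_0^c$, since $\h_0^c$ is a Cartan subalgebra of $\g_0^c$ but in general not of $\g^c$). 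What is actually needed is that $\fz_{\g^c}(\h_0^c)=\bigoplus_k \g^c_{k,0}$ is \emph{abelian}; granted that, $\fs^c=[\fs^c,\fs^c]$ together with additivity of weights under the bracket gives your formula at once, with no reference to Cartan subalgebras of $\fs^c$. The abelianness is true but needs its own purely complex argument: $\fz_{\g^c}(\h_0^c)$ is reductive and graded, its degree-zero component is $\h_0^c$, which is central in it, so its derived subalgebra would be a $\Z_m$-graded semisimple algebra with trivial degree-zero component --- impossible, because a finite-order automorphism of a nonzero semisimple Lie algebra always has nonzero fixed points (and for $m=\infty$ the statement is immediate since $\h_0^c$ is then a Cartan subalgebra of $\g^c$). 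So the step is repairable, but as written the appeal to strong regularity and Lemma \ref{lemMNCCSA} does not prove it.
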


\begin{proof}
Suppose $a(\fs)=\fs'$ for some $a\in G_0$. First, we show  that $g(\fs) = \fs'$ for some $g\in N_{G_0}(\h_0)$.  To prove this claim, note that $\h_0$ and $a(\h_0)$ both are maximally noncompact Cartan subalgebras of $\fn_{\g_0}(\fs')$. Thus,  $ba(\h_0) = \h_0$ for some $b\in N_{G_0}(\fs')$, and $g=ba\in N_{G_0}(\h_0)$ satisfies  $g(\fs) = \fs'$. Recall that $W_0(\h_0)=N_{G_0}(\h_0)/Z_{G_0}(\h_0)$, and let $w=gZ_{G_0}(\h_0)$. If $(k,\lambda)\in P(\fs^c)$, then $g(\g^c_{k,\lambda}) = \g^c_{k,\lambda^{g}}$, which shows that  $w\cdot(k,\lambda)=(k,\lambda^{g})\in P((\fs')^c)$; in particular,  $P((\fs')^c)=w\cdot P(\fs^c)$, and $P(\fs^c)$ and $P((\fs')^c)$ are conjugate under $W_0(\h_0)$.

Conversely, let  $w\cdot P(\fs^c) =
P((\fs')^c)$ for some  $w\in W_0(\h_0)$; write $w=gZ_{G_0}(\h_0)$ with $g\in N_{G_0}(\h_0)$. Now $w\cdot(k,\lambda) = (k,\lambda^{g})$ and  $g(\fs^c_{k,\lambda}) = (\fs')^c_{k,\lambda^{g}}$ for every weight $(k,\lambda)$, which proves that $g(\fs) = \fs'$. 
\end{proof}

The following proposition yields an algorithm to decide whether an $\h_0$-regular semisimple $\Z$-graded subalgebra is strongly $\h_0$-regular.

\begin{proposition}\label{propSR}
Let $\fs$ be an $\h_0$-regular semisimple $\Z$-graded subalgebra of $\g=\fp\oplus \fk$. Then $\fs$ is strongly $\h_0$-regular if and only if $\fz_{\g_0}(\h_0\cap  \fp)\cap \fn_{\g_0}(\fs)\cap  \fp = \h_0\cap  \fp$.
\end{proposition}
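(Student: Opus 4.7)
The plan is to reduce the ``maximally noncompact'' condition in the definition of strong $\h_0$-regularity to the self-centralising condition stated in the proposition. The bridge is the structural fact that, for a $\theta$-stable reductive subalgebra of $\g$, a $\theta$-stable Cartan subalgebra is maximally noncompact if and only if its $\fp$-part is a maximal abelian subspace of the ambient $\fp$-part; this is to be combined with the elementary observation that a maximal abelian subspace of $\fn_{\g_0}(\fs)\cap\fp$ is precisely a self-centralising one.

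Set $\fn = \fn_{\g_0}(\fs)$. By Proposition \ref{propThStab} both $\fs$ and $\fn$ are $\theta$-stable and $\fn$ is reductive in $\g$, so $\fn = (\fn\cap\fk)\oplus(\fn\cap\fp)$. Moreover $\h_0\subseteq\fn$ by $\h_0$-regularity, and $\fz_{\g_0}(\h_0)=\h_0$ forces $\fz_\fn(\h_0)=\h_0$; since $\h_0$ is toral, it follows that $\h_0$ is a $\theta$-stable Cartan subalgebra of $\fn$ with noncompact dimension $\dim(\h_0\cap\fp)$. I would then invoke the structural fact above to conclude that $\fs$ is strongly $\h_0$-regular if and only if $\h_0\cap\fp$ is a maximal abelian subspace of $\fn\cap\fp$: for $\fn$ semisimple this is standard real Lie theory (see, e.g., \cite[Props.~6.47 \& 6.61]{knapp02}), and the reductive case reduces to this one by decomposing $\fn = \fd\oplus\ft$ with centre $\ft\subseteq\h_0$ and applying Lemma \ref{lemMNCCSA}.

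For the final step, if $x\in\fn\cap\fp$ commutes with $\h_0\cap\fp$, then $(\h_0\cap\fp)+\R x$ is an abelian subspace of $\fn\cap\fp$, so maximality of $\h_0\cap\fp$ forces $x\in\h_0\cap\fp$; conversely, any strictly larger abelian subspace would contain such an $x$. Hence $\h_0\cap\fp$ is maximal abelian in $\fn\cap\fp$ if and only if $\fz_{\fn\cap\fp}(\h_0\cap\fp)=\h_0\cap\fp$, and the inclusion $\fn\cap\fp\subseteq\g_0$ identifies the left-hand centraliser with $\fz_{\g_0}(\h_0\cap\fp)\cap\fn\cap\fp$, giving the stated equivalence. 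The main obstacle I expect is the clean descent from the semisimple to the reductive case in the structural fact above; however, this is routine once one notes that $\ft\cap\fp$ lies in every maximal abelian subspace of $\fn\cap\fp$ by centrality, so Lemma \ref{lemMNCCSA} reduces everything to the semisimple subalgebra $\fd$, with the remainder being linear algebra.
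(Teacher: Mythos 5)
Your proposal is correct and follows essentially the same route as the paper's proof: both reduce via Proposition \ref{propThStab} and the decomposition of $\fn_{\g_0}(\fs)$ into derived subalgebra plus centre to the semisimple part, invoke Knapp's Prop.~6.47 to translate ``maximally noncompact'' into ``$\fp$-part is maximal abelian'', and then identify maximal abelian subspaces with self-centralising ones. The only cosmetic difference is that you phrase the reduction through Lemma \ref{lemMNCCSA} while the paper carries out the $\fb\oplus\fc$ bookkeeping directly.
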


\begin{proof}
Write $\fn_0 = \fn_{\g_0}(\fs)$. It follows from Proposition \ref{propThStab} that  $\fn_0$ is $\theta$-stable and reductive in $\g$. Thus, $\fn_0 = 
\fb\oplus \fc$, where $\fb$ is the derived subalgebra and $\fc$ is the
centre of $\fn_0$; both $\fb$ and $\fc$ are $\theta$-stable, hence we can decompose\[\fn_0=(\fb\cap\fk)\oplus(\fb\cap\fp)\oplus(\fc\cap\fk)\oplus(\fc\cap\fp).\]Every Cartan subalgebra of $\fn_0$ is the direct sum of $\fc$ and a 
Cartan subalgebra of $\fb$;  a maximally noncompact Cartan  subalgebra of $\fn_0$ is the direct sum of $\fc$ and a maximally noncompact Cartan subalgebra of $\fb$. Since $\h_0$ is $\theta$-stable, we can write 
\[\h_0 = (\h_0\cap\fb\cap\fk)\oplus(\h_0\cap\fb\cap\fp)\oplus (\fc\cap\fk)\oplus(\fc\cap\fk);\]note that $\h_b = \fb \cap \h_0$ is a Cartan subalgebra of $\fb$. Since $\fb$ is  $\theta$-stable, it follows from Lemma \ref{lemCD} that $\fb = (\fb\cap\fk) \oplus (\fb\cap \fp)$ is a Cartan decomposition of $\fb$.    Clearly, $\fs$ is strongly $\h_0$-regular if and only if $\h_b$ is a maximally noncompact Cartan subalgebra of $\fb$. By \cite[Prop.\ 6.47]{knapp02} and the remarks in \cite[p.\ 386]{knapp02}, the latter holds if and only if 
$\h_b\cap \fp$ is a maximal abelian subspace of $\fb\cap \fp$. This is the
same as saying that the centraliser of $\h_b\cap \fp$ in $\fb\cap \fp$ is
equal to $\h_b\cap \fp$, which is equivalent to $\fz_{\fn_0}(\fv)\cap \fp = \fv$ with $\fv = \h_0\cap  \fp$.
\end{proof}

To  state the main algorithm of this section, we need one more piece of notation. As before,  let $\h_0$ be a $\theta$-stable Cartan
subalgebra of $\g_0$, and $\theta(\g_i)=\g_{-i}$ for all $i$. Let $W_0^c$ be the 
Weyl group of $\g_0^c$ relative to $\h_0^c$. Let $\fs^c$ be an $\h_0^c$-regular 
$\Z$-graded semisimple subalgebra of $\g^c$. For $w\in W_0$ denote by \[w\cdot \fs^c\]  the 
$\h_0^c$-regular semisimple $\Z$-graded subalgebra of $\g^c$ whose weight system is $w(P(\fs^c))$; then $\fs^c$ and $w\cdot \fs^c$ are called \myem{$W_0$-equivalent}. A semisimple $\Z$-graded subalgebra $\fs^c\leq \g^c$ 
is \myem{$\sigma$-stable}, if each $\fs^c_i$ is $\sigma$-stable.

Algorithm \ref{alg:subalgs} below constructs a list $L$ of strongly $\h_0$-regular semisimple $\Z$-graded subalgebras of $\g$ such that each $\tilde\fs \in L$ is $G_0^c$-conjugate to $\fs^c$, and every strongly $\h_0$-regular semisimple $\Z$-graded 
subalgebra of $\g$ whose complexification is $G_0^c$-conjugate to $\fs^c$ is $G_0$-conjugate to a unique element in $L$.

\begin{algorithm}
\caption{\tt StronglyRegularSubalgebras($\g,\h_0,\fs^c$)}
\footnotesize
\label{alg:subalgs}
\tcc{
$\fs^c$ is a $\h_0^c$-regular semisimple $\Z$-graded subalgebra of $\g^c$, where $\h_0$ is a $\theta$-stable Cartan subalgebra of $\g_0$ and $\theta$ is a Cartan involution with $\theta(\g_i) = \g_{-i}$ for all $i$. Return a list $L$ of strongly $\h_0$-regular semisimple $\Z$-graded 
subalgebras of $\g$ with
\begin{enumerate}
\item the complexification of each  $\tilde\fs \in L$ is $G_0^c$-conjugate to $\fs^c$,
\item  every strongly $\h_0$-regular semisimple $\Z$-graded 
subalgebra of $\g$ whose complex- ification is $G_0^c$-conjugate to $\fs^c$ is $G_0$-conjugate to a unique $\tilde\fs\in L$.
\end{enumerate}
}
\Begin{
compute the root system $\Phi_0$ of $\g_0^c$ with respect to $\h_0^c$, and 
generators of its Weyl group $W_0$\;
compute the real Weyl group $W_0(\h_0)\leq W_0$ (Section \ref{sec:realweyl})\;
compute a set $w_1,\ldots,w_s$ of representatives of the right cosets of 
$W_0(\h_0)$ in $W_0$\;
set $L = \emptyset$ and let $\sigma$ be the real structure defined by $\g$\;
  \For{$1\leq i\leq s$}{
    set $\tilde{\fs}^c = w_i\cdot \fs^c$\;
    \If{\rm$\tilde{\fs}^c$ is $\sigma$-stable} {
      set $\tilde{\fs}=\{x\in \tilde{\fs}^c \mid \sigma(x) = x \}$\;
      \lIf{\rm $\tilde{\fs}$ is strongly $\h_0$-regular (Proposition \ref{propSR})}
    {add $\tilde{\fs}$ to $L$} 
    }
   }
   remove $G_0$-conjugate copies in $L$ (Proposition \ref{strongconj})\;
   \Return{$L$}\;

}
\end{algorithm}

\begin{proposition}\label{prop:6}
Algorithm \ref{alg:subalgs} is correct. 
\end{proposition}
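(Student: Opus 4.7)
The plan is to verify the two properties required of the output list $L$. Property (1) is essentially guaranteed by construction: each candidate $\tilde{\fs}^c = w_i\cdot \fs^c$ is obtained by applying a lift $g_i\in N_{G_0^c}(\h_0^c)$ of $w_i$ to $\fs^c$ (Theorem \ref{thm:W}), so $\tilde{\fs}^c$ is $G_0^c$-conjugate to $\fs^c$ and thus $\Z$-graded and semisimple. Whenever the $\sigma$-stability test succeeds, $\tilde{\fs}=\{x\in \tilde{\fs}^c\mid \sigma(x)=x\}$ is a real form of $\tilde{\fs}^c$, so its complexification is $G_0^c$-conjugate to $\fs^c$; the test based on Proposition \ref{propSR} then restricts $L$ to strongly $\h_0$-regular subalgebras.

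For property (2), I would start with an arbitrary strongly $\h_0$-regular $\Z$-graded semisimple subalgebra $\fs'\leq\g$ whose complexification satisfies $g(\fs^c)=(\fs')^c$ for some $g\in G_0^c$. The key step is to replace $g$ by an element of $N_{G_0^c}(\h_0^c)$. Both $\h_0^c$ and $g(\h_0^c)$ are Cartan subalgebras of $\fn_{\g_0^c}((\fs')^c)$, which is reductive in $\g^c$ by the complex analogue of Proposition \ref{propThStab}(b); hence some inner automorphism $a$ of $\fn_{\g_0^c}((\fs')^c)\subseteq G_0^c$ conjugates $g(\h_0^c)$ back to $\h_0^c$ while preserving $(\fs')^c$. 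Then $ag\in N_{G_0^c}(\h_0^c)$ with $ag(\fs^c)=(\fs')^c$. Writing its image $w\in W_0^c$ as $w=uw_i$ with $u\in W_0(\h_0)$ and $w_i$ one of the chosen right-coset representatives, I obtain
\[
P(\tilde{\fs}^c)=w_i\cdot P(\fs^c)=u^{-1}\cdot P((\fs')^c),
\]
so a preimage $g_u\in N_{G_0}(\h_0)$ of $u^{-1}$ realises $g_u((\fs')^c)=\tilde{\fs}^c$ at the level of Lie algebras.

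Since $g_u\in G_0$ commutes with $\sigma$ and $(\fs')^c$ is $\sigma$-stable, so is $\tilde{\fs}^c$; moreover $g_u(\fs')=\tilde{\fs}$, so $\tilde{\fs}$ is the $\sigma$-fixed real form of $\tilde{\fs}^c$ and is $G_0$-conjugate to $\fs'$. Strong $\h_0$-regularity of $\tilde{\fs}$ follows because $g_u$ fixes $\h_0$ setwise and maps $\fn_{\g_0}(\fs')$ onto $\fn_{\g_0}(\tilde{\fs})$; since the noncompact dimension of $\h_0$ inside either normaliser depends only on eigenvalues of $\ad$, which are preserved by the automorphism $g_u$, the maximally noncompact property transfers, with Lemma \ref{lemMNCCSA} guaranteeing that this is the right notion to track. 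Hence $\tilde{\fs}$ passes both tests in the loop and is inserted into $L$. The final deduplication invokes Proposition \ref{strongconj}, which characterises $G_0$-conjugacy of strongly $\h_0$-regular subalgebras by $W_0(\h_0)$-conjugacy of their weight systems, delivering the required uniqueness.

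The main obstacle is the reduction from an arbitrary $g\in G_0^c$ to an element of $N_{G_0^c}(\h_0^c)$ inside the proof of property (2); once the complex Cartan-conjugacy argument in the reductive normaliser $\fn_{\g_0^c}((\fs')^c)$ is set up, the remainder is clean bookkeeping of right cosets of $W_0(\h_0)$ in $W_0^c$ and of how the real structure $\sigma$ and the $\Z$-grading survive the $W_0(\h_0)$-action.
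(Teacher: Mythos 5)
Your proof is correct and follows essentially the same route as the paper's: reduce the conjugating element to one in $N_{G_0^c}(\h_0^c)$, decompose its image in $W_0^c$ as $uw_j$ with $u\in W_0(\h_0)$ and $w_j$ a coset representative, and check that the candidate produced at step $j$ survives the $\sigma$-stability and strong-regularity tests and is $G_0$-conjugate to $\fs'$. The only substantive difference is that where the paper simply cites Vinberg for the existence of $w'\in N_{G_0^c}(\h_0^c)$ with $w'(\fs^c)=(\fs')^c$, you supply a direct argument via conjugacy of the Cartan subalgebras $\h_0^c$ and $g(\h_0^c)$ inside the reductive normaliser $\fn_{\g_0^c}((\fs')^c)$, which is valid and makes the proof self-contained.
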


\begin{proof}
Denote by $L$ the output of the algorithm; clearly, $L$ contains no $G_0$-conjugate subalgebras.

Let $\fs'\leq \g$ be a strongly $\h_0$-regular semisimple $\Z$-graded
subalgebra such that $(\fs')^c$ is $G_0^c$-conjugate to $\fs^c$; we
have to show that $\fs'$ is $G_0$-conjugate to an element of $L$. {By
  \cite[Prop.\ 4(2)]{vinberg2}, there exists $w'\in N_{G_0^c}(\h_0^c)$
  with $w'(\fs^c)=(\fs')^c$, thus its projection $w\in W_0^c$ to $W_0^c$ satisfies $w\cdot \fs^c = (\fs')^c$;} write $w=uw_j$ for some $u\in W_0(\h_0)$ and $w_j$ as in Line 4 of the algorithm. In particular, $w_j\cdot \fs^c$ is $W_0^c$-equivalent to $(\fs')^c$. If $i=j$ in the iteration in Line 6, then  $\tilde{\fs}^c = w_j\cdot \fs^c$ is constructed. Since $(\fs')^c$ is $\sigma$-stable and $(\fs')^c$ is $W_0(\h_0)$-equivalent to $\tilde{\fs}^c$, it follows that the latter is $\sigma$-stable as well; the real subalgebra $\tilde{\fs}$ is constructed in Line 9. It also follows that $\tilde{\fs}$ is $W_0(\h_0)$-equivalent  to $\fs'$, thus  $\tilde{\fs}$ is $G_0$-conjugate to $\fs'$ by Proposition \ref{strongconj}. By construction, $\tilde{\fs}$ is strongly $\h_0$-regular as well; it is  added to $L$ in Line 10.
\end{proof}

\begin{proposition}\label{prop:listZgrad}
Let $\g$ and $\theta$ be as before, in particular, $\theta(\g_i)=\g_{-i}$ for all $i$. Let $\fs^c$ be a semisimple $\Z$-graded subalgebra of $\g^c$, and let $\h_0^1,\ldots,\h_0^t$, up to $G_0$-conjugacy, be the $\theta$-stable Cartan subalgebras of $\g_0$. Up to $G_0$-conjugacy, the regular semisimple $\Z$-graded subalgebras of $\g$ that are $G_0^c$-conjugate to $\fs^c$ are $L=L_1\cup\ldots\cup L_t$, where each $L_i$ is  the output of Algorithm \ref{alg:subalgs} with input $(\g,\h_0^i$, $\fs^c)$. 
\end{proposition}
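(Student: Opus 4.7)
The plan is to lift Proposition \ref{prop:6}, which enumerates strongly $\h_0$-regular subalgebras for a single fixed $\theta$-stable Cartan subalgebra $\h_0$, by ranging $\h_0$ over the representatives $\h_0^1,\dots,\h_0^t$. Two things need to be checked: (i) every regular semisimple $\Z$-graded subalgebra $\fs\leq\g$ with $\fs^c$ being $G_0^c$-conjugate to the input is $G_0$-conjugate to one that is strongly $\h_0^i$-regular for some $i$; (ii) no two subalgebras drawn from different $L_i$'s are $G_0$-conjugate (uniqueness inside a single $L_i$ is already provided by Proposition \ref{prop:6}).

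For (i), I would start with $\fs$ which is $\h$-regular for some Cartan subalgebra $\h\leq\g_0$, and first invoke \cite[Prop.\ 6.59]{knapp02} to replace $\fs$ by a $G_0$-conjugate so that $\h$ becomes $\theta$-stable. By Proposition \ref{propThStab} the normaliser $\fn=\fn_{\g_0}(\fs)$ is then $\theta$-stable and reductive in $\g$, with $\h$ a Cartan subalgebra of $\fn$. Next I would replace $\h$ by a $\theta$-stable maximally noncompact Cartan subalgebra $\h^{**}$ of $\fn$, whose existence follows from Lemma \ref{lemMNCCSA} combined with \cite[Prop.\ 6.59]{knapp02}. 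A rank count---every Cartan subalgebra of the reductive algebra $\fn$ has the same dimension as $\h$, namely the rank of $\g_0$---identifies $\h^{**}$ as a Cartan subalgebra of $\g_0$. Being $\theta$-stable, $\h^{**}$ is $G_0$-conjugate to a unique $\h_0^i$; conjugating $\fs$ accordingly makes it strongly $\h_0^i$-regular, and Proposition \ref{prop:6} places its $G_0$-class inside $L_i$.

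The step that I expect will need the most care is the passage from $\h$-regularity to strong $\h^{**}$-regularity: the conjugation moving one Cartan subalgebra of $\fn$ to another a priori lives in the adjoint group of $\fn$, and I need it to be realised in $G_0$ by an element that stabilises $\fs$. This can be handled by writing a generator of the adjoint group of $\fn$ as $\exp(\ad_{\fn}(x))$ with $x\in\fn$ and observing that $\exp(\ad_{\g}(x))\in G_0$ restricts to it, while stabilising $\fs$ because $x\in\fn_{\g_0}(\fs)$.

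For (ii), assume $\fs_1\in L_i$ and $\fs_2=g(\fs_1)\in L_j$ for some $g\in G_0$. Then $\h_0^i$ and $g^{-1}(\h_0^j)$ are both maximally noncompact Cartan subalgebras of $\fn_{\g_0}(\fs_1)$; Lemma \ref{lemMNCCSA}(c), together with the lifting just described, supplies an element of $G_0$ carrying one to the other. Consequently $\h_0^i$ and $\h_0^j$ lie in the same $G_0$-class, which by the choice of representatives forces $i=j$.
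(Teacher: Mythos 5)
Your proposal is correct and follows essentially the same route as the paper: pass to a maximally noncompact Cartan subalgebra of $\fn_{\g_0}(\fs)$, conjugate it onto one of the representatives $\h_0^i$ to land in $L_i$, use the uniqueness (up to conjugacy) of maximally noncompact Cartan subalgebras from Lemma \ref{lemMNCCSA} to see that the index $i$ is determined, and invoke Proposition \ref{prop:6} for irredundancy within each $L_i$. The extra details you supply -- the rank count identifying the maximally noncompact Cartan subalgebra of $\fn_{\g_0}(\fs)$ as a Cartan subalgebra of $\g_0$, and the lifting of adjoint-group elements of $\fn_{\g_0}(\fs)$ to elements of $G_0$ stabilising $\fs$ -- are left implicit in the paper but are exactly the right points to make explicit.
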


\begin{proof}
Let $\fs'$ be a regular semisimple $\Z$-graded subalgebra of 
$\g$ such that $(\fs')^c$ is $G_0^c$-conjugate to $\fs^c$. 
If $\tilde\h_0$ is a
maximally noncompact Cartan subalgebra of $\fn_{\g_0}(\fs')$, then $g(\tilde\h_0) = \h_0^i$ for some $g\in G_0$ and $i\in\{1,\ldots,t\}$. In particular, $g(\fs')$ is $\h_0^i$-regular. Since
$\tilde\h_0$ is maximally noncompact in $\fn_{\g_0}(\fs')$, we know that
$g(\fs')$ is strongly $\h_0^i$-regular. Note that $g(\fs')$ cannot be strongly $\h_0^j$-regular
with $i\neq j$ since that would imply that $\h_0^i$ and $\h_0^j$ are 
$G_0$-conjugate. From this we conclude that $L$ contains a $G_0$-conjugate of every regular
semisimple $\Z$-graded subalgebra $\fs'$ of $\g$ with $(\fs')^c$ being $G_0^c$-conjugate to $\fs^c$. Moreover, it follows from Proposition \ref{prop:6} that $L$ does not contain $G_0$-conjugate subalgebras.
\end{proof}

We remark that the $\theta$-stable Cartan subalgebras of $[\g_0,\g_0]$ (hence also those of $\g_0$) can, up to conjugacy, be constructed using our algorithms in \cite[\S 4.3]{dfg}.

\section{Regular subalgebras of real simple Lie algebras}\label{secRSRS}

\noindent Let $\g$ be a real simple Lie algebra, with trivial grading.
Let $\h$ be a Cartan subalgebra of $\g$.
Let $\fs^c\leq \g^c$ be an $\h^c$-regular semisimple subalgebra with trivial $\Z$-grading, that is, $\fs^c_0 = \fs^c$. Algorithm 
\ref{alg:subalgs} with input $\h$ and $\fs^c$ returns a list of 
strongly $\h$-regular semisimple subalgebras of $\g$ such that each
strongly $\h$-regular semisimple subalgebra of $\g$ is $G$-conjugate
to exactly one element of the list.

Dynkin \cite[\S 5]{dyn} has given an algorithm for listing the 
$\h^c$-regular semisimple
subalgebras of $\g^c$, up to $G^c$-conjugacy. Furthermore, we can compute a 
list of $\theta$-stable Cartan subalgebras $\h_1,\ldots,\h_r$ of $\g$ such 
that each Cartan subalgebra of $\g$ is $G$-conjugate to exactly one of
them, see \cite[\S 4.3]{dfg}. We perform Algorithm \ref{alg:subalgs} for each $\h_i$ and each 
$\h_i^c$-regular semisimple subalgebra $\fs^c$ of $\g^c$. Taking the union of
all outputs we get a list of all regular semisimple subalgebras of $\g$,
up to $G$-conjugacy; for this, note that every regular semisimple subalgebra $\fs$ of $\g$ is strongly $\h_i$-regular for a unique $i$.

As an example, in Table \ref{tab:regsub} we display the outcome of our computations for the real
form $\g$ of type EI of the simple complex Lie algebra $\g^c$ of type
$E_6$. Recall that $\g=\fk\oplus\fp$ where $\fk$ is simple of type
$C_4$ and $\fp$ has dimension 42. Up to conjugacy, $\g$ has five Cartan
subalgebras $\h_1,\ldots,\h_5$ where $\h_i$ has noncompact dimension
$7-i$. Let $\Phi_i$ denote the root system of $\g^c$ with respect to 
$\h_i^c$. The first set of rows in Table \ref{tab:regsub}, labelled 'real rts',
'im.\ rts', and 'cpt.\ im.\ rts', gives the subsystems of real roots, of
imaginary roots, of compact imaginary roots of $\Phi_i$, respectively
(see Section \ref{sec:realweyl}). The
second set of rows gives the cardinality of the real Weyl group $W(\h_i)$ 
and the index
$[W^c:W(\h_i)]$; subsequently, the runtimes (in seconds) are given for computing $W(\h_i)$ and for constructing all strongly $\h_i$-regular subalgebras of $\g$ 
up to conjugacy; all runtimes have been obtained on a 3.16GHz
machine. The rows below list these regular subalgebras. Up to
$G^c$-conjugacy there are 19 regular subalgebras in $\g^c$; we have 
assigned a number to each of them (without intending any sort of order).
Since $\h_1$ is the split Cartan subalgebra of $\g$, the column of 
$\h_1$ has in each row exactly one real subalgebra, which is the split form
of the corresponding complex regular subalgebra. The columns of $\h_i$
have, on row $j$, the strongly $\h_i$-regular subalgebras which are 
$G_0^c$ equivalent to the $j$-th complex regular subalgebra. On many occasions
there is simply nothing, meaning that there are no such subalgebras, and in
other places there are more than one -- for those we have just added a row
to the table without repeating the label of the complex regular subalgebra.

Concerning the runtimes, we see that the time needed to compute $W(\h_i)$
has no impact on the total time. Also, it is seen that the runtimes
increase sharply if the index $[W^c:W(\h)]$ increases: this is to be expected
as the main iteration in Algorithm \ref{alg:subalgs} runs over a set of
coset representatives.

\begin{table}[ht]
\caption{Regular subalgebras of the real form of type EI of $E_6$}\label{tab:regsub}
\setlength{\extrarowheight}{0.6ex} 
\centering \small
\begin{tabular}{|l|c|c|c|c|c|}
\hline
\multicolumn{1}{|c|}{\scc CSA} & \scc $\h_1$ &\scc $\h_2$ &\scc $\h_3$ &\scc $\h_4$ &\scc $\h_5$\\ 
\hline
real rts & $E_6$ & $A_5$ & $A_3$ & $A_1$ & 0\\
im.\ rts & 0 & $A_1$ & $2A_1$ & $3A_1$ & $D_4$\\
cpt.\ im.\ rts & 0 & 0 & 0 & 0 & $4A_1$\\
\hline
$|W(\h)|$ & 51840 & 1440 & 192 & 96 & 384\\
$[W^c:W(\h)]$ & 1 & 36 & 270 & 540 & 135\\\hline
time $W(\h)$ & 0.03 & 0.17 & 0.72 & 0.99 & 1.61 \\
total time & 28 & 459 & 3191 & 10949 & 3864\\
\hline
\multicolumn{6}{|c|}{\scc\small\slshape Regular subalgebras of EI} \\\hline
1 & $\SL_2(\R)$ & & & & $\su(2)$\\\cdashline{1-6}
2 & $2\SL_2(\R)$ & & $\SL_2(\C)$ & & $2\su(2)$\\\cdashline{1-6}
3 & $\SL_2(\R)\oplus \SL_3(\R)$ & & & &\\\cdashline{1-6}
4 & $\SL_5(\R)$ & & & & \\\cdashline{1-6}
5 & $\so(5,5)$ & & & &\\\cdashline{1-6}
6 & $\SL_2(\R)\oplus \SL_5(\R)$ & & & &\\\cdashline{1-6}
7 & $2\SL_2(\R)\oplus \SL_3(\R)$ & & & $\SL_2(\C)\oplus\su(1,2)$ & \\\cdashline{1-6}
8 & $\SL_2(\R)\oplus 2\SL_3(\R)$ & & & $\SL_2(\R)\oplus\SL_3(\C)$ &
$\su(2)\oplus \SL_3(\C)$\\\cdashline{1-6}
9 & $\SL_2(\R)\oplus\SL_4(\R)$ & & & & $\su(2)\oplus \SL_2(\mathbb{H})$\\\cdashline{1-6}
10 & $3\SL_2(\R)$ & & $\SL_2(\R)\oplus\SL_2(\C)$ & & $\su(2)\oplus\SL_2(\C)$\\
& & & & & $3\su(2)$\\\cdashline{1-6}
11 & $\SL_3(\R)$ & & & $\su(1,2)$ & \\\cdashline{1-6}
12 & $\SL_4(\R)$ & & $\su(2,2)$ & & $\SL_2(\mathbb{H})$\\\cdashline{1-6}
13 & $\SL_6(\R)$ & & & & $\SL_3(\mathbb{H})$\\\cdashline{1-6}
14 & $2\SL_3(\R)$ & & & $\SL_3(\C)$ & \\\cdashline{1-6}
15 & $\so(4,4)$ & & $\so(3,5)$ & & \\\cdashline{1-6}
16 & $\SL_2(\R)\oplus\SL_6(\R)$ & & & & $\su(2)\oplus \SL_3(\mathbb{H})$\\\cdashline{1-6}
17 & $2\SL_2(\R)\oplus \SL_4(\R)$ & & $\SL_2(\C)\oplus \su(2,2)$ & &
$2\su(2)\oplus \SL_2(\mathbb{H})$ \\\cdashline{1-6}
18 & $4\SL_2(\R)$ & & $2\SL_2(\R)\oplus\SL_2(\C)$ & & $2\su(2)\oplus \SL_2(\C)$\\
& & & $2\SL_2(\C)$ & & $4\su(2)$\\\cdashline{1-6}
19 & $3\SL_3(\R)$ & & & $\SL_3(\C)\oplus \su(1,2)$ & \\
\hline
\end{tabular}
\end{table}

\section{Carrier algebras}\label{secCA}
\noindent We continue with our assumption that $\theta$ is a Cartan involution of $\g$ with $\theta(\g_i)=\g_{-i}$ for all $i$.

\begin{definition} A semisimple $\Z$-graded subalgebra $\fs^c$ of $\g^c$ is a 
\myem{carrier algebra} in $\g^c$ if 
\begin{items}
\item[(1)] $\fs^c$ is regular, that is, normalised by some Cartan subalgebra of $\g_0^c$,
\item[(2)] $\fs^c$ is complete, that is, not a proper subalgebra of a reductive
$\Z$-graded regular subalgebra of $\g^c$ of the same rank as $\fs^c$,
\item[(3)] $\fs^c$ is locally flat, that is, $\dim_\C \fs_0^c = \dim_\C \fs_1^c$.
\end{items}
A $\Z$-graded subalgebra $\fs$ of $\g$ is defined to be a carrier algebra in $\g$ if $\fs^c$ is a carrier algebra in $\g^c$. A carrier algebra $\fs$ of $\g$ is \myem{principal} if $\fs_0$ is a torus, that is, if it is a Cartan subalgebra of $\fs$.
\end{definition}
 
Results of Vinberg \cite{vinberg2} relate carrier algebras to nilpotent elements. To describe this, we need more notation and preliminary results. Let $e\in \g_1$ be nilpotent (and nonzero), that is, $\ad_\g (e)$ is nilpotent. A variant of the Jacobson-Morozov Theorem, see \cite[Thm 2.1]{hongvanle}, states that there are $h\in \g_0$ and $f\in \g_{-1}$ such that $[h,e]=2e$,    
$[h,f]=-2f$, and $[e,f]=h$. Such a triple $(h,e,f)$ is  a \myem{homogeneous $\SL_2$-triple}, and $h$ is a \myem{characteristic} of $e$. The next lemma follows from \cite[Thm 2.1]{hongvanle}.

\begin{lemma}\label{lem:sl2con}
Two nonzero nilpotent $e,e'\in \g_1$ lying in homogeneous 
$\SL_2$-triples $(h,e,f)$ and $(h',e',f')$, respectively, are $G_0$-conjugate if and only if  the triples are $G_0$-conjugate.
\end{lemma}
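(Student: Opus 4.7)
The plan is to prove the two implications separately. The reverse implication is immediate from the definitions: if some $g \in G_0$ sends $(h,e,f)$ to $(h',e',f')$, then in particular $g(e) = e'$, so $e$ and $e'$ are $G_0$-conjugate.

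For the forward implication, suppose $g \in G_0$ satisfies $g(e) = e'$. Since every element of $G_0$ preserves each graded component $\g_i$ (it is generated by $\exp(\ad x)$ with $x \in \g_0$), the triple $(g(h), e', g(f))$ is again a homogeneous $\SL_2$-triple, with middle element $e'$. Replacing $(h,e,f)$ by its $g$-image, the problem reduces to the following claim: any two homogeneous $\SL_2$-triples sharing the same positive nilpotent part are conjugate by an element of the stabilizer $\{g \in G_0 \mid g(e') = e'\}$.

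This reduced claim is the graded analogue of the classical Mal'cev--Kostant theorem, and is the content of \cite[Thm 2.1]{hongvanle}. The argument can be sketched as follows. Given two homogeneous triples $(h_1, e', f_1)$ and $(h_2, e', f_2)$, the difference $h_1 - h_2$ lies in $\fz_{\g_0}(e')$, because $h_1, h_2 \in \g_0$ and both satisfy $[h_i, e'] = 2e'$. Using the $\SL_2$-representation of the first triple acting on $\fz_\g(e')$, one decomposes $\fz_\g(e')$ into non-negative $\ad h_1$-eigenspaces and produces an element $u$ in the strictly positive part, chosen inside $\g_0$, such that $\exp(\ad u)(h_2) = h_1$; a second, analogous step in degree $-1$ then conjugates $f_2$ to $f_1$ while fixing $e'$ and $h_1$. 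Since $e' \in \g_1$ is homogeneous and both triples are homogeneous, all the conjugating elements can be chosen in $\g_0$, so the full conjugation lies in $G_0$.

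The main obstacle is precisely this last point: verifying that the Kostant--Mal'cev conjugating element can be taken inside $\fz_{\g_0}(e')$ rather than merely inside $\fz_\g(e')$. This relies on the fact that the $\Z_m$-grading on $\g$ restricts to a grading on $\fz_\g(e')$ compatibly with the $\ad h_1$-eigenspace decomposition, so that each step of the standard Mal'cev argument can be performed degree by degree. This compatibility is the essential new ingredient in the graded setting and is exactly what \cite[Thm 2.1]{hongvanle} packages.
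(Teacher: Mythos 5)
Your proposal is correct and matches the paper, which gives no argument of its own beyond the remark that the lemma ``follows from \cite[Thm 2.1]{hongvanle}''; you cite the same theorem and additionally sketch why the Mal'cev--Kostant conjugating element can be taken in $\g_0$. (One small simplification: once $h_1=h_2$ and $e'$ agree, the third element is automatically equal by uniqueness, since $f_1-f_2\in\fz_\g(e')$ would have $\ad h_1$-eigenvalue $-2$ while $\fz_\g(e')$ has only non-negative eigenvalues, so no second conjugation step is needed.)
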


\begin{lemma}\label{lem:ecen2}
Let $(h,e,f)$ be an homogeneous $\SL_2$-triple in $\g$;  let $\fa\leq \g$
be the subalgebra generated by $\{h,e,f\}$, and let $\fv$ be the subspace spanned by $e$. If $\h_z$ is a Cartan subalgebra of $\fz_{\g_0}(\fa)$, then $\Span_\R(h) \oplus \h_z$ is toral, and  a maximal toral subalgebra of
$\fn_{\g_0}(\fv)$.
\end{lemma}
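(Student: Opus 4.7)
The plan is to verify in turn that (i) the sum $\Span_\R(h)+\h_z$ is direct; (ii) it is toral; (iii) it lies in $\fn_{\g_0}(\fv)$; (iv) it is maximal in $\fn_{\g_0}(\fv)$ among toral subalgebras. The first three points are essentially bookkeeping; the real content lies in (iv).

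For (i), $h\notin\h_z\subseteq\fz_{\g_0}(\fa)$ since $[h,e]=2e\neq 0$. For (ii), $h$ is $\ad_\g$-semisimple as the neutral element of an $\SL_2$-triple. The subalgebra $\fa=\Span_\R(h,e,f)$ is $\Z$-graded semisimple in $\g$ (with $\fa_0=\R h$ and $\fa_{\pm1}=\R e,\R f$), so Lemma \ref{lem:ecen1} makes $\fz_{\g_0}(\fa)$ reductive in $\g$, and Lemma \ref{lemRedSA}(c) then forces elements of $\h_z$ to be semisimple in $\g$. Since $\h_z\subseteq\fz_{\g_0}(\fa)$ centralises $h$, the sum is abelian and consists of elements semisimple in $\g$, hence is toral. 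For (iii), $[h,e]=2e\in\fv$ and $[\h_z,e]=0\in\fv$.

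For (iv), I would pick a toral subalgebra $\ft\leq\fn_{\g_0}(\fv)$ containing $\Span_\R(h)\oplus\h_z$ and let $t\in\ft$ be arbitrary. Since $\fv=\R e$ is one-dimensional and normalised by $\ft$, there is $\lambda\in\R$ with $[t,e]=\lambda e$. The useful trick is to pass to $t':=t-(\lambda/2)h\in\ft$; it is semisimple in $\g$, commutes with $h$ (as $\ft$ is abelian), and satisfies $[t',e]=0$ by construction. The crux is then to show $[t',f]=0$: the Jacobi identity gives
$[e,[t',f]]=[[e,t'],f]+[t',[e,f]]=0+[t',h]=0$ and $[h,[t',f]]=-2[t',f]$,
so $[t',f]\in\ker(\ad e)$ has $\ad h$-weight $-2$. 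Since in any finite-dimensional $\fa\cong\SL_2(\R)$-module a highest-weight vector has non-negative $\ad h$-weight, this forces $[t',f]=0$.

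Hence $t'\in\fz_{\g_0}(\fa)$; it is semisimple in $\g$ and commutes with $\h_z$. Since $\h_z$ is a Cartan subalgebra, hence by Lemma \ref{lemRedSA}(c) a maximal toral subspace, of the reductive-in-$\g$ subalgebra $\fz_{\g_0}(\fa)$, we conclude $t'\in\h_z$, and therefore $t=t'+(\lambda/2)h\in\h_z\oplus\Span_\R(h)$. The only real obstacle is the $\SL_2$-representation-theoretic step $[t',f]=0$; everything else is direct bookkeeping using Lemmas \ref{lemRedSA} and \ref{lem:ecen1}.
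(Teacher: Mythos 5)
Your proof is correct and follows essentially the same route as the paper: decompose $t$ as $(\lambda/2)h$ plus an element centralising $e$ and $h$, use elementary $\SL_2(\R)$-representation theory to conclude that element also centralises $f$, and then invoke maximality of the Cartan subalgebra $\h_z$ in the reductive-in-$\g$ algebra $\fz_{\g_0}(\fa)$ (via Lemmas \ref{lem:ecen1} and \ref{lemRedSA}c)). The only cosmetic difference is that the paper packages the $\SL_2$ step by decomposing all of $\fz_{\g_0}(e)$ into non-negative $\ad h$-eigenspaces and identifying the zero eigenspace with $\fz_{\g_0}(\fa)$, whereas you argue element-wise that a highest-weight vector of weight $-2$ must vanish; these are the same fact in different clothing.
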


\begin{proof}
By Lemma \ref{lemRedSA} and the definition of toral, it is obvious that $\ft=\Span_\R(h) \oplus \h_z$ is
a toral subalgebra; it remains to show that it is maximal. First, note that
$\fn_{\g_0}(\fv) = \Span_\R(h) \oplus \fc$, where $\fc = \fz_{\g_0} (\fv)$: if $u\in \fn_{\g_0}(\fv)$, then $[u,e]=ce$ for some $c\in\R$, hence $u=c'h+u-c'h$ with $c'=c/2$ and $u-c'h\in \fc$.  If $u\in \fc$, then $[[h,u],e]=[h,[u,e]]+[u,[e,h]]=0$, hence $[h,u]\in \fc$, and $\fc$ has a
basis of eigenvectors of $h$; since $[e,\fc]=0$, it follows from
$\SL_2$-theory that the eigenvalues of $h$ on $C$ are  non-negative
integers.  Write $\fc_i$ for the eigenspace with eigenvalue $i$.  If
$u\in \fc_0$, then $[[f,u],e]=[u,h]=0$, hence $[f,u]\in \fc_0$; this
proves that $\fc_0$ is an $\mathfrak{a}$-module. By $\SL_2$-theory,
$\fc_0$ is a direct sum of 1-dimensional $\mathfrak{a}^c$-modules,
whence  $\fc_0 =  \fz_{\g_0}(\fa)$. Suppose, for a contradiction, that
$\ft'\ne \ft$ is a toral subalgebra of $\fn_{\g_0}(\fv)$ containing
$\ft$. Let $t\in \ft'\setminus \ft$. We may suppose that $t\in \fc$, hence $t = \sum_i t_i$ with $t_i\in \fc_i$. Since $\ft'$ is abelian, $[h,t]=0$, which implies that $t_i=0$ for all $i>0$. Thus, $t=t_0\in \fc_0=\fz_{\g_0}(\fa)$. Thus, the subalgebra of $\fz_{\g_0}(\fa)$ generated by $\h_z$ and $t$ is toral. Since $\h_z$ is a Cartan subalgebra of $\fz_{\g_0}(\fa)$, we must have $t\in\h_z$, a contradiction to $t\notin \ft$.
\end{proof}

We now describe a construction of carrier algebras which in the complex
case is due to Vinberg \cite{vinberg2}. 

\begin{definition}\label{defCA}
Let $e\in \g_1$ be nilpotent with homogeneous $\SL_2$-triple $(h,e,f)$ in $\g$. Let $\fa$ be the subalgebra of $\g$ generated by $\{h,e,f\}$. Let $\h_z$ be a maximally noncompact
Cartan subalgebra of $\fz_{\g_0}(\fa)$, see Lemma \ref{lemMNCCSA}.
By Lemma \ref{lem:ecen2}, the subalgebra $\ft = \Span_\R(h) \oplus \h_z$ is maximal toral in $\fn_{\g_0}(\Span_\R(e))$; 
define $\lambda \colon \ft \to \R$ by $[t,e] =\lambda(t)e$, and  note that $\lambda(h)=2$ and $\lambda(t)=0$ for $t\in \h_z$. For $k\in\Z$ define 
\begin{eqnarray}\label{eqgk}\g_k(\ft,\lambda) = \{x\in \g_k \mid [t,x] = k\lambda(t) x \text{ for all }
t\in \ft\}\quad\text{and}\quad \g(\ft,\lambda) = \bigoplus\nolimits_{k\in \Z} \g_k(\ft,\lambda);
\end{eqnarray}
here $\g_k = \g_{k\bmod m}$ if $m<\infty$. Let  $\fc(e,h,\h_z)$ be
the derived subalgebra of $\g(\ft,\lambda)$.
\end{definition}
 The same construction can be performed in $\g^c$ and, by results of Vinberg \cite{vinberg2}, the resulting algebra $\fc^c(e,h,\h_z^c)$ is a carrier
algebra in $\g^c$. Since $\fc^c(e,h,\h_z^c)=\fc(e,h,\h_z)\otimes_\R\C$, we have the following proposition; in particular, $\g(\ft,\lambda)$ is reductive in $\g$ and $\fc(e,h,\h_z)$ is semisimple.

\begin{proposition}\label{propiscar}
Using the notation of Definition \ref{defCA}, the algebra $\fc(e,h,\h_z)$ is a carrier algebra of $\g$.
\end{proposition}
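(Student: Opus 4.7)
The plan is to reduce everything to Vinberg's complex result by showing that the construction in Definition \ref{defCA} commutes with complexification, so that $\fc(e,h,\h_z)^c$ coincides with $\fc^c(e,h,\h_z^c)$. By the real definition of a carrier algebra (which is stated in terms of the complexification), this identification together with Vinberg \cite{vinberg2} then closes the argument.

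First I would verify that $\h_z^c$ is a Cartan subalgebra of $\fz_{\g_0^c}(\fa^c)$, where $\fa^c = \fa\otimes_\R\C$. The subalgebra $\fa$ is a homogeneous $\mathfrak{sl}_2$-triple, hence semisimple, and Lemma \ref{lem:ecen1} gives that $\fz_{\g_0}(\fa)$ is reductive in $\g$; then by Lemma \ref{lemThInv}(b), $\fz_{\g_0^c}(\fa^c)=\fz_{\g_0}(\fa)^c$ is reductive in $\g^c$. The maximally noncompact Cartan subalgebra $\h_z$ of $\fz_{\g_0}(\fa)$ complexifies to a Cartan subalgebra of $\fz_{\g_0^c}(\fa^c)$ via Lemma \ref{lemRedSA}(c), since $\h_z^c$ is still a maximal abelian subspace of semisimple elements. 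Consequently $\ft^c = \Span_\C(h)\oplus\h_z^c$ is the toral subalgebra used in the complex construction of $\fc^c(e,h,\h_z^c)$, and the linear functional $\lambda$ extends $\C$-linearly to the corresponding $\lambda^c$ on $\ft^c$, with $\lambda^c(h)=2$ and $\lambda^c|_{\h_z^c}=0$.

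Next I would check the eigenspace identity
\[
\g_k(\ft,\lambda)\otimes_\R\C \;=\; \g_k^c(\ft^c,\lambda^c)\qquad(k\in\Z).
\]
Both sides are defined by the vanishing of the same set of linear conditions $[t,\cdot]-k\lambda(t)\,\mathrm{id}$ for $t\in\ft$; as $\ft^c$-eigenspaces of $\g^c$ are complexifications of the corresponding real $\ft$-eigenspaces of $\g$ (any $x=x_1+\imath x_2\in\g_k^c$ satisfies the eigenvalue equation if and only if each $x_j$ does), the identity follows. Summing over $k$ gives $\g(\ft,\lambda)^c = \g^c(\ft^c,\lambda^c)$. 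Taking derived subalgebras commutes with complexification, so
\[
\fc(e,h,\h_z)^c \;=\; [\g(\ft,\lambda),\g(\ft,\lambda)]^c \;=\; [\g^c(\ft^c,\lambda^c),\g^c(\ft^c,\lambda^c)] \;=\; \fc^c(e,h,\h_z^c).
\]

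Finally, the $\Z$-grading of $\g(\ft,\lambda)$ inherited from $\g$ (with $\g_k(\ft,\lambda)\subseteq \g_{k\bmod m}$) passes to its derived subalgebra, making $\fc(e,h,\h_z)$ a $\Z$-graded subalgebra of $\g$. By the cited result of Vinberg \cite{vinberg2}, $\fc^c(e,h,\h_z^c)$ is a carrier algebra of $\g^c$; by the definition of carrier algebra over $\R$, this means $\fc(e,h,\h_z)$ is a carrier algebra of $\g$. The only nontrivial ingredient is the Cartan-subalgebra claim for $\h_z^c$ and the compatibility of the $\ft$-weight decomposition with scalar extension; everything else is bookkeeping.
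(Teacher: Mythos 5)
Your argument is correct and is precisely the route the paper takes: the paper justifies Proposition \ref{propiscar} by the one-line remark that the construction of Definition \ref{defCA} commutes with complexification, so that $\fc^c(e,h,\h_z^c)=\fc(e,h,\h_z)\otimes_\R\C$ is a carrier algebra in $\g^c$ by Vinberg's results, and then invokes the definition of a real carrier algebra. You have simply filled in the bookkeeping (that $\h_z^c$ is a Cartan subalgebra of $\fz_{\g_0^c}(\fa^c)$ and that the $\ft$-weight decomposition and derived subalgebra commute with scalar extension), all of which is sound.
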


We call $\fc=\fc(e,h,\h_z)$ a \myem{carrier algebra of $e$}. It
depends on $e$, on the choice of 
characteristic $h$, and on the choice of a maximally noncompact Cartan 
subalgebra
$\h_z$. However, it does not depend on the choice of the third element 
of the $\SL_2$-triple, as that one is uniquely determined by $h$ and $e$, see \cite[Thm 2.1]{hongvanle}. By \cite[Thm 2]{vinberg2}, the element  $e$ is \myem{in general position} in $\fc_1$, that is, $[\fc_0,e] = \fc_1$.

\begin{remark}\label{remGP} Let $\fc$ be a carrier algebra with defining element $h_0\in \fc_0$, see Lemma \ref{lem:defelt}. Let $e'\in \fc_1$ be  in general position. Since $\fc$ is locally-flat and  $[\fc_0,e'] = \fc_1$, it follows that $\fz_{\fc_0}(e')=0$, hence $\fn_{\fc_0}(e') = \Span_\R(h_0)$. Thus, there is a unique $h=2h_0\in\fc_0$ with $[h,e']=2e'$. Moreover, by \cite[Thm 2.1]{hongvanle}, there is a unique homogeneous $\SL_2$-triple $(h,e',f')$ in $\fc$. This proves that every nilpotent element in general position lies in a unique homogeneous $\SL_2$-triple in $\fc$ with  characteristic $h=2h_0$. Thus, if $(h',e',f')$ is a homogeneous $\SL_2$-triple in $\fc$ with $e'$ in general position, then $h'/2$ is the defining element of $\fc$.
\end{remark}

\begin{proposition}\label{propcarconj}
Let $e,e'\in \g_1$ be nilpotent with carrier algebras $\fc(e,h,\h_z)$ and $\fc(e',h',\h_z')$. If $e$ and $e'$ are $G_0$-conjugate, then
$\fc(e,h,\h_z)$ and $\fc(e',h',\h_z')$ are $G_0$-conjugate.
\end{proposition}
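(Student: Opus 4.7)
The plan is to lift a conjugacy between $e$ and $e'$ to an element of $G_0$ that also identifies the chosen Cartan subalgebras in the centralisers, and then to show that the construction in Definition \ref{defCA} is transported by such an element.

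First, I would invoke Lemma \ref{lem:sl2con}: since $e$ and $e'$ are $G_0$-conjugate, the chosen homogeneous $\SL_2$-triples $(h,e,f)$ and $(h',e',f')$ are $G_0$-conjugate as well, so there exists $g_1\in G_0$ with $g_1(h,e,f)=(h',e',f')$. Writing $\fa=\langle h,e,f\rangle$ and $\fa'=\langle h',e',f'\rangle$, we get $g_1(\fa)=\fa'$ and hence $g_1(\fz_{\g_0}(\fa))=\fz_{\g_0}(\fa')$. Consequently $g_1(\h_z)$ is a maximally noncompact Cartan subalgebra of $\fz_{\g_0}(\fa')$. By Lemma \ref{lem:ecen1}, $\fz_{\g_0}(\fa')$ is reductive in $\g$, so Lemma \ref{lemMNCCSA}c) applies: there is an element in the adjoint group of $\fz_{\g_0}(\fa')$ sending $g_1(\h_z)$ to $\h_z'$.

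The next step is to promote this inner automorphism of $\fz_{\g_0}(\fa')$ to an element of $G_0$. Any generator $\exp(\ad_{\fz_{\g_0}(\fa')} x)$ with $x\in\fz_{\g_0}(\fa')\subseteq\g_0$ equals the restriction of $\exp(\ad_\g x)\in G_0$ to $\fz_{\g_0}(\fa')$, and the latter commutes with $\fa'$, hence fixes $h',e',f'$ pointwise. Taking the appropriate product, I obtain $g_2\in G_0$ with $g_2(h',e',f')=(h',e',f')$ and $g_2(g_1(\h_z))=\h_z'$. Setting $g=g_2\circ g_1\in G_0$, we have $g(h,e,f)=(h',e',f')$ and $g(\h_z)=\h_z'$, so $g$ maps the toral subalgebra $\ft=\Span_\R(h)\oplus\h_z$ to $\ft'=\Span_\R(h')\oplus\h_z'$.

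Finally, I transport the whole construction in Definition \ref{defCA} via $g$. For $t\in\ft$ we compute $[g(t),e']=g([t,e])=\lambda(t)e'$, so the functional $\lambda'$ attached to $(e',h',\h_z')$ satisfies $\lambda'\circ g|_\ft=\lambda$. Consequently
\[ g(\g_k(\ft,\lambda))=\g_k(\ft',\lambda')\quad\text{for all }k\in\Z, \]
and hence $g(\g(\ft,\lambda))=\g(\ft',\lambda')$. Taking derived subalgebras gives $g(\fc(e,h,\h_z))=\fc(e',h',\h_z')$, as required. The main conceptual step is the assembly in the second paragraph: the conjugacy of $\SL_2$-triples and the uniqueness up to conjugacy of maximally noncompact Cartans in the centraliser live a priori in different groups, and the observation that the second can be realised inside $G_0$ by elements centralising the triple is what allows them to be composed.
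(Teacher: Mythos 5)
Your proof is correct and follows essentially the same route as the paper: use Lemma \ref{lem:sl2con} to conjugate the $\SL_2$-triples, then use Lemma \ref{lemMNCCSA} to conjugate the two maximally noncompact Cartan subalgebras of $\fz_{\g_0}(\fa')$ by an element of $Z_{G_0}(\fa')$, and finally observe that the construction of Definition \ref{defCA} is $G_0$-equivariant. The paper simply asserts the equivariance as $g(\fc(e,h,\h_z))=\fc(g(e),g(h),g(\h_z))$ and the realisability of the inner conjugation inside $Z_{G_0}(\fa')$, both of which you usefully spell out.
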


\begin{proof}
Let $g\in G_0$ with $g(e)=e'$. By Lemma \ref{lem:sl2con}, we can assume that $g(e) = e'$ and $g(h)=h'$. Now $g(\fc(e,h,\h_z)) = \fc(g(e),g(h),g(\h_z))=\fc(e',h',g(\h_z))$, and we have to show that $\fc(e',h',g(\h_z))$ and $\fc(e',h',\h_z')$ are $G_0$-conjugate. Let $\fa'$ be the subalgebra of $\g$ generated by $\{h',e',f'\}$. By definition, $g(\h_z)$ and $\h_z'$ are  maximally noncompact Cartan subalgebras of $\fz_{\g_0}(\fa')$; by Lemma \ref{lemMNCCSA}, they are conjugate under $Z_{G_0}(\fa')$. Since the elements of the latter group leave $e'$ and $h'$ invariant, the assertion follows.
\end{proof}

By Proposition \ref{propcarconj}, we have a well-defined map $\psi$ from the set of $G_0$-orbits of nilpotent elements in $\g_1$ to the set of $G_0$-conjugacy classes of carrier algebras in $\g$, mapping a nilpotent orbit $G_0\cdot e$ to the $G_0$-conjugacy class of the  carrier algebra $\fc(e,h,\h_z)$.
Similarly, we have a map $\psi^c$ from the set of nilpotent $G_0^c$-orbits in $\g_1^c$ to the set of $G_0^c$-conjugacy classes of carrier algebras in $\g^c$.

In the complex case, $\psi^c$ is a bijection, and the inverse of
$\psi^c$ is obtained by mapping a carrier algebra $\fc^c$ to an
element $e\in\fc^c_1$ in general position: by \cite[Thm 4]{vinberg2}, there exist a characteristic $h$ and
torus $\h_z$ such that $\fc^c = \fc^c(e,h,\h_z)$.  In the real case, $\psi$ is not injective: If $\fc\leq\g$ is a carrier algebra and  $e,e'\in \fc_1$ are in general position, then it is not necessarily 
true that $e$ and $e'$ are $G_0$-conjugate. Moreover, it is not necessarily true that $\fc$ is a carrier of $e$ or $e'$. In general, the map $\psi$ is  not even surjective: it is possible that for a given carrier algebra
$\fc \leq \g$ there is no $e\in \fc_1$ in general position such that $\fc=\fc(e,h,\h_z)$ is a carrier algebra
of $e$.

Algorithms are known to list the $G_0^c$-conjugacy classes of carrier
algebras in $\g^c$, see \cite{gra15,litt8,vinberg2}. In combination with Proposition \ref{prop:listZgrad}, this gives an immediate algorithm to list the $G_0$-conjugacy classes of carrier algebras in $\g$. 
We note that all carrier algebras constructed by this procedure
are strongly $\h_0$-regular for some $\theta$-stable Cartan subalgebra
$\h_0\leq \g_0$, so they are $\theta$-stable by Proposition \ref{propThStab}. We also mention that, using this procedure, the defining element of each such carrier algebra $\fc$ lies in $\h_0$, so that $[\h_0,\fc_i]\subseteq\fc_i$ for all $i$ by Lemma \ref{lemStabFSi}.

\section{Nilpotent orbits}\label{secNO}
\noindent We continue with the previous notation and suppose $\g$ is a
$\Z_m$-graded semisimple Lie algebra with Cartan involution $\theta$
reversing the grading. As shown in the previous section, we have an algorithm to compute
a list $L$ of all carrier algebras in $\g$, up to
$G_0$-conjugacy. Using this algorithm,  each such carrier algebra $\fc$ is strongly $\h_0$-regular for a (known) $\theta$-stable Cartan
subalgebra $\h_0\leq\g_0$ with  $[\h_0,\fc_i]\subseteq \fc_i$ for all
$i$. Since the map
$\psi$ of Section \ref{secCA} is not necessarily injective or surjective,
this does not immediately yield the nilpotent $G_0$-orbits in $\g_1$. In this
section, we discuss what needs to be done to obtain the nilpotent orbits from
the list $L$ of carrier algebras.

A fundamental problem that we encounter here is to decide, for a given carrier
algebra $\fc$ and $e\in \fc_1$ in general position, whether \myem{$\fc$ is a 
carrier algebra of $e$}, that is, whether $\fc=\fc(e,h,\h_z)$ for some $\h_z$; note that $h/2$ must be the unique defining element of $\fc$, see Remark \ref{remGP}.  For that we use the next  result.

\begin{proposition}\label{propcarrk}
Let $\fc$ be a carrier algebra in $\g$ with defining element $h/2$. Let
$e\in \fc_1$ be in 
general position, lying in the homogeneous $\SL_2$-triple $(h,e,f)$.
Let $\fa$ be the subalgebra spanned by $\{h,e,f\}$. 
\begin{ithm}
\item If $\widetilde{\h}\leq\fz_{\g_0}(\fc)$ is a Cartan subalgebra of
  $\fz_{\g_0}(\fc)$, then $\widetilde{\h}$ is a Cartan subalgebra of $\fz_{\g_0}(\fa)$.
\item The subalgebra $\fc$ is a carrier algebra of $e$ if and only if the real ranks of $\fz_{\g_0}(\fc)$ and $\fz_{\g_0}(\fa)$ coincide.
\item Write $\fz_{\g_0}(\fa)=\fd\oplus \ft$, where $\fd$ is the derived 
subalgebra and $\ft$ is the centre. Let $\widetilde{\h}$ be a maximally
noncompact Cartan subalgebra of $\fz_{\g_0}(\fc)$. Then $\fc$ is a carrier
algebra of $e$ if and only if $\widetilde{\h}\cap\fd$ is a maximally
noncompact Cartan subalgebra of $\fd$.
\item Let $e'\in \fc_1$ be in general position. If $e$ and $e'$ are $G_0$-conjugate, then $\fc$ is a carrier algebra of $e$ if and only if it is a carrier algebra of $e'$.
\end{ithm}
\end{proposition}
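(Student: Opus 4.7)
My plan is to treat (a) as a structural lemma, derive (b) from it using Vinberg's theorem together with a complexification argument, and then obtain (c) and (d) as consequences of (b). For part (a), I note that $\fa\subseteq \fc$: by Remark~\ref{remGP} the characteristic $h$ equals twice the defining element of $\fc$, and $f\in\fc_{-1}$ is the unique third element of a homogeneous $\SL_2$-triple in $\fc$ containing $e$. Hence $\fz_{\g_0}(\fc)\subseteq \fz_{\g_0}(\fa)$, and both centralisers are reductive in $\g$ by Lemma~\ref{lem:ecen1} applied to $\fc$ and to the $\Z$-graded semisimple subalgebra $\fa$. By Lemma~\ref{lemRedSA}(c) and Lemma~\ref{lemThInv}(b), the claim reduces to showing that $\widetilde{\h}^c$ is a Cartan subalgebra of $\fz_{\g_0^c}(\fa^c)$. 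In $\g^c$, Vinberg's theorem \cite[Thm 4]{vinberg2} gives $\fc^c=\fc^c(e,h,\h_z^c)$ for some Cartan subalgebra $\h_z^c$ of $\fz_{\g_0^c}(\fa^c)$; the construction places $\h_z^c$ in the centre of $\g^c(\ft^c,\lambda^c)$, so $\h_z^c\subseteq \fz_{\g_0^c}(\fc^c)$, and $\h_z^c$ stays maximal toral there since any toral extension in $\fz_{\g_0^c}(\fc^c)$ still lies in $\fz_{\g_0^c}(\fa^c)$. Over $\C$ all Cartan subalgebras of $\fz_{\g_0^c}(\fc^c)$ are conjugate via elements of $Z_{G_0^c}(\fc^c)\subseteq Z_{G_0^c}(\fa^c)$, and such elements preserve $\fz_{\g_0^c}(\fa^c)$; hence every Cartan of $\fz_{\g_0^c}(\fc^c)$ is also a Cartan of $\fz_{\g_0^c}(\fa^c)$.

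For the forward direction of (b), if $\fc=\fc(e,h,\h_z)$ then $\h_z$ is central in $\g(\ft,\lambda)$ and hence lies in $\fz_{\g_0}(\fc)$; since $\h_z$ is already maximal toral in the larger $\fz_{\g_0}(\fa)$, it is a Cartan subalgebra of $\fz_{\g_0}(\fc)$ as well. Its noncompact dimension equals the real rank of $\fz_{\g_0}(\fa)$ by choice of $\h_z$, while (a) implies the real rank of $\fz_{\g_0}(\fc)$ cannot exceed that of $\fz_{\g_0}(\fa)$, so the two coincide. For the converse, let $\widetilde{\h}$ be a maximally noncompact Cartan subalgebra of $\fz_{\g_0}(\fc)$; by (a) it is a Cartan of $\fz_{\g_0}(\fa)$, and the rank hypothesis forces it to be maximally noncompact there as well, so $\tilde{\fc}=\fc(e,h,\widetilde{\h})$ is a carrier algebra of $e$ by Definition~\ref{defCA}. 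Since $h/2$ is the defining element of $\fc$ and $\widetilde{\h}\subseteq \fz_{\g_0}(\fc)$, every $\fc_k$ lies in $\g_k(\R h\oplus \widetilde{\h},\lambda)$, whence $\fc=[\fc,\fc]\subseteq \tilde{\fc}$. Passing to $\g^c$ and invoking Vinberg's uniqueness of carrier algebras corresponding to the $G_0^c$-orbit of $e$, the complexifications $\fc^c$ and $\tilde{\fc}^c$ are $G_0^c$-conjugate and hence of equal $\C$-dimension; the inclusion then promotes to equality of complexifications, and taking $\sigma$-fixed points gives $\fc=\tilde{\fc}$.

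Part (c) combines (b) with Lemma~\ref{lemMNCCSA}: by (a), the given $\widetilde{\h}$ is a Cartan subalgebra of $\fz_{\g_0}(\fa)$, and since $\ft$ is the centre of $\fz_{\g_0}(\fa)$, $\widetilde{\h}$ splits as $\widetilde{\h}=(\widetilde{\h}\cap\fd)\oplus\ft$; Lemma~\ref{lemMNCCSA}(a,b) then yields that the real rank of $\fz_{\g_0}(\fa)$ equals the real rank of $\fd$ plus the noncompact dimension of $\ft$, and that the noncompact dimension of $\widetilde{\h}$ equals the noncompact dimension of $\widetilde{\h}\cap\fd$ plus that of $\ft$. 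Together with (b) this makes the equality of the two real ranks, i.e.\ $\fc$ being a carrier algebra of $e$, equivalent to $\widetilde{\h}\cap\fd$ being a maximally noncompact Cartan subalgebra of $\fd$. For (d), Remark~\ref{remGP} shows that $e$ and $e'$ share the characteristic $h$ inside $\fc$, so $(h,e,f)$ and $(h,e',f')$ are the unique homogeneous $\SL_2$-triples in $\fc$ containing $e$ and $e'$; since $e$ and $e'$ are $G_0$-conjugate, Lemma~\ref{lem:sl2con} furnishes $g\in G_0$ with $g(h,e,f)=(h,e',f')$, whence $g(\fa)=\fa'=\langle h,e',f'\rangle$ and $\fz_{\g_0}(\fa)$ and $\fz_{\g_0}(\fa')$ have equal real rank, and the claim follows from (b). The main obstacle is the converse of (b): promoting the inclusion $\fc\subseteq\tilde{\fc}$ to equality requires passing through $\g^c$ and invoking Vinberg's uniqueness of complex carrier algebras.
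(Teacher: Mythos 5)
Your proof is correct, and the overall architecture (a structural part (a), then (b), with (c) and (d) as corollaries; the forward direction of (b) and parts (c), (d) essentially as in the paper) matches the paper's. The interesting divergence is in part (a) and the converse of (b). The paper works entirely over $\R$: it applies Vinberg's Proposition~2 (asserting that the complex proof carries over to the real case) to $\fb=\Span_\R(h)\oplus\widetilde{\h}$ to get $\g(\fb,\varphi)=\fs\oplus\widetilde{\h}$ with $\fs$ complete, uses completeness of $\fc$ to force $\fs=\fc$, and then kills any excess of a Cartan subalgebra $\h_z\supseteq\widetilde{\h}$ of $\fz_{\g_0}(\fa)$ via the computation $\fz_{\g_0}(\fb)=\fc_0\oplus\widetilde{\h}$ together with $\fz_{\fc_0}(e)=0$ (local flatness plus general position); the identity $\g(\fb,\varphi)=\fc\oplus\widetilde{\h}$ then also delivers the converse of (b) for free, since $\fc$ is visibly the derived subalgebra. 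You instead complexify: for (a) you invoke Vinberg's Theorem~4 to realise $\fc^c$ as $\fc^c(e,h,\h_z^c)$, observe that $\h_z^c$ is central in $\g^c(\ft^c,\lambda)$ and hence a Cartan subalgebra of $\fz_{\g_0^c}(\fc^c)$ that is simultaneously one of $\fz_{\g_0^c}(\fa^c)$, and transport this to an arbitrary $\widetilde{\h}^c$ by conjugacy of Cartan subalgebras under $Z_{G_0^c}(\fc^c)\subseteq Z_{G_0^c}(\fa^c)$; for the converse of (b) you prove only the inclusion $\fc\subseteq\fc(e,h,\widetilde{\h})$ directly and then upgrade it to equality by comparing dimensions of the complexifications, using that all complex carrier algebras of $e$ are $G_0^c$-conjugate. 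The trade-off is genuine: your route leans only on Vinberg's results as stated over $\C$ (Theorem~4 and the well-definedness of $\psi^c$), avoiding the real analogue of Proposition~2 that the paper asserts without proof, at the price of a more roundabout argument and of routine but unstated facts about complexification of Cartan subalgebras and centralisers. Both arguments are sound.
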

\begin{proof}
By Remark \ref{remGP}, the triple $(h,e,f)$  exists and is uniquely
defined by $e$. Recall that each of $\fz_{\g_0}(\fa)$, and
$\fz_{\g_0}(\fc)$ is reductive in $\g$, see   Lemma
\ref{lem:ecen1}. We make use of Lemma \ref{lemRedSA}c) and Lemma
\ref{lemMNCCSA};  the main ideas in the proof of a) are
borrowed from \cite{vinberg2}. 
\begin{iprf}
\item Set $\fb = \Span_\R(h) \oplus \widetilde{\h}$, and define the
linear map $\varphi\colon \fb \to \R$ by $\varphi(h) = 1$ and $\varphi(u) = 0$
for $u\in \widetilde{\h}$. Let $\g(\fb,\varphi)$ be defined as in
\eqref{eqgk}. Now \cite[Prop.\ 2]{vinberg2} shows that $\g(\fb,\varphi)=\fs\oplus 
\widetilde{\h}$, where $\fs$ is a complete regular semisimple $\Z$-graded
subalgebra, and $\widetilde{\h}$ is the centre. (In \cite{vinberg2},
the complex case is discussed, but the same result follows for real algebras.) Moreover, $\fs$ contains
$\fc$ and has the same rank as $\fc$. As $\fc$ is a carrier
algebra (and therefore complete), we get $\fs=\fc$. 
Note that $\fz_{\g_0}(\fc)\leq \fz_{\g_0}(\fa)$, hence $\widetilde{\h}$ is 
contained in a Cartan subalgebra  $\h_z$ of $\fz_{\g_0}(\fa)$. It
follows easily from the definition of $\g(\fb,\varphi)$ that
$\fz_{\g_0}(\fb)=\g(\fb,\varphi)\cap \g_0 = \fc_0 \oplus
\widetilde{\h}$. Note that $\h_z$ centralises $\widetilde{\h}$ and $h$, hence $\widetilde{\h}\leq \h_z \leq \fz_{\g_0}(\fb)$.
If  the first inclusion would be proper, then there exists a nonzero
$v\in \h_z\setminus \widetilde{\h}$ with $v\in\fc_0$; now $v\in
\fz_{\g_0}(\fa) \cap \fc_0\leq \fz_{\fc_0}(e)=\{0\}$ yields a contradiction.
(Note that $\fz_{\fc_0}(e)=\{0\}$ since $\fc$ is locally-flat and $e$ is in general position.) This proves that $\h_z = \widetilde{\h}$.  
\item Suppose that $\fc$ is a carrier algebra of $e$, that is, $\fc=\fc(e,h,\h_z)$. Due to the uniqueness of the third element in an $\SL_2$-triple, $\h_z$ must be a maximally noncompact Cartan subalgebra of $\fz_{\g_0}(\fa)$, where $\fa$ is as defined in the proposition.  It follows from the
construction of $\fc(e,h,\h_z)$ that $\h_z\leq \fz_{\g_0}(\fc)\leq \fz_
{\g_0}(\fa)$. Hence $\h_z$ is contained in a Cartan subalgebra $
\widetilde{\h}\leq \fz_{\g_0}(\fc)$. It follows from Part a) that both $
\h_z$ and $\widetilde{\h}$ are Cartan subalgebras of  $\fz_{\g_0}(\fa)
$, hence $\h_z=\widetilde{\h}$, and $\h_z$ is a Cartan subalgebra of $
\fz_{\g_0}(\fc)$. The results in Section \ref{secTSA} imply that $\h_z$
is maximally noncompact in $\fz_{\g_0}(\fc)$ since otherwise $\h_z$
would not be maximally noncompact in $\fz_{\g_0}(\fa)$. So the real
ranks of both algebras coincide. 

For the converse, suppose that $\fz_{\g_0}(\fc)$ and $\fz_{\g_0}(\fa)$
have the same real rank. If $\widetilde{\h}$ is a maximally noncompact
Cartan subalgebra of $\fz_{\g_0}(\fc)$, then, using Part a), it follows
that $\widetilde{\h}$ is a maximally noncompact Cartan subalgebra of $\fz_{\g_0}(\fa)
$ as well. If $\fb$ and $\varphi$ are as in a), then that proof  shows
that $\g(\fb,\varphi) = \fc \oplus \widetilde{\h}$. On the other hand,
by construction of $\fc(e,h,\widetilde{\h})$, the derived subalgebra of
$\g(\fb,\varphi)$ is equal to $\fc(e,h,\widetilde{\h})$. So we get $\fc=
\fc(e,h,\widetilde{\h})$, and $\fc$ is a carrier algebra of $e$. 

\item As shown above, $\fc$ is a carrier algebra of $e$ if and only if $\widetilde\h$ is a maximally noncompact Cartan subalgebra of $\fz_{\g_0}(\fa)$, if and only if  $\widetilde\h\cap \fd$ is a maximally noncompact Cartan subalgebra of $\fd$, see Lemma \ref{lemMNCCSA}b).

\item Suppose $e'=g(e)$ with $g\in G_0$, and note that $e'$ lies in a
  unique $\SL_2$-triple $(h,e',f')$ in $\fc$, see Remark
  \ref{remGP}. It follows from Lemma \ref{lem:sl2con} that $(h,e,f)$
  and $(h,e',f')$ are $Z_{G_0}(h)$-conjugate, thus we can assume that
  $g(h)=h$ and $g(e)=e'$. Let $\fa$ and $\fa'$ be the algebras spanned
  by $\{h,e,f\}$ and $\{h,e',f'\}$, respectively; note that
  $\fa'=g(\fa)$, hence $\fz_{\g_0}(\fa)$ and $\fz_{\g_0}(\fa')$ have
  the same real rank. Now b) proves the assertion.
\end{iprf}
\end{proof}

\begin{remark}\label{rem:iscar}
Using our algorithms in \cite{dfg} for computing a Cartan decomposition and a maximally noncompact Cartan subalgebra of a
real semisimple Lie algebra, Proposition \ref{propcarrk}c) gives an immediate algorithm to decide whether $\fc$ is a carrier of a 
given $e\in \fc_1$. 
\end{remark}

\noindent In summary, an approach for computing (representatives of) the nilpotent $G_0$-orbits in $\g_1$ is as follows. Recall the definition of the representation $\rho \colon G_0 \to \GL(\g_1)$, see Section \ref{sec:rho}.

\begin{items}
\item[(a)] Use the methods of Section \ref{secCA}
to compute the list $L$ of carrier algebras in $\g$, up to $G_0$-conjugacy.
\item[(b)] For each carrier algebra $\fc\in L$ with defining element $h/2$  do the following:
\begin{ithm}
\item[(b1)] Use linear algebra methods to determine the set $\mathcal{G}$ of elements $e\in \fc_1$  in
general position.
\item[(b2)] Use elements of $\rho(G_0)$ to find a {\it small} finite set $\mathcal{G}'\subseteq
\mathcal{G}$, such that every element of $\mathcal{G}$ is $G_0$-conjugate
to at least one element of $\mathcal{G}'$.  The objective is to make
 $\mathcal{G}'$ as small as possible; ideally, we want to reduce $\mathcal{G}$ up to $G_0$-conjugacy.
\item[(b3)] Let  $\mathcal{C}$ be set  of those $e\in\mathcal{G}'$ such that $\fc$ is a carrier algebra of $e$, see Remark \ref{rem:iscar}.
\item[(b4)] Use $G_0$-invariants to show that the elements of $\mathcal{C}$
are {\em not} $G_0$-conjugate.
\end{ithm}
\item[(c)] The union of all sets $\mathcal{C}$ that have been obtained is a complete and irredundant set of $G_0$-orbit representatives of nilpotent elements in $\g_1$, see Proposition \ref{propcarconj}.
\end{items}

This is not an algorithm, but more a structured programme of work:   it is not immediately clear how to 
perform steps (b1), (b2), and (b4). We use the previous notation and  comment on each step below.

\subsection{The set of elements in general position}\label{sec:genpos}
Let $\fc$ be a carrier algebra, and let $\{x_1,\ldots,x_s\}$ and $\{y_1,\ldots,y_s\}$ be bases of $\fc_0$ and $\fc_1$
respectively; recall that $\dim \fc_0=\dim \fc_1$ since $\fc$ is locally-flat. Write $e=c_1y_1+\ldots+c_sy_s\in\fc_1$ and recall that $e$ is in general position if and only if $[\fc_0,e]=\fc_1$, that is, if and only if $\ad_\g(e)$ induces  a bijection $\fc_0\to\fc_1$. For each $y_i$ let $M_i$ be the matrix describing $\ad_\g(y_i)\colon \fc_0\to \fc_1$, $x\mapsto [y_i,x]$. Then $\ad_\g(e)|_{\fc_0}$ is represented by the matrix $M(c_1,\ldots,c_s)=c_1M_1+\ldots+c_s M_s$, which is a bijection if and only if $f(c_1,\ldots,c_s)=\det(M(c_1,\ldots,c_s))$ is non-zero. It follows that the set $\mathcal{G}$ of elements in $\fc_1$ in general position can be described as \[\mathcal{G}=\{c_1y_1+\ldots+c_sy_s\mid f(c_1,\ldots,c_s)\ne 0\}.\]By itself, the condition $f(c_1,\ldots,c_s)\neq 0$ is not very revealing.
However, on many occasions we can factorise $f$, and the factors tend to
give useful information.

\subsection{Finding elements of $\rho(G_0)$}\label{secG}
Let $\mathcal{G}$ be as in the previous section; we want to reduce $\mathcal{G}$ to a {\it small} finite subset $\mathcal{G}'$ such that each $e\in\mathcal{G}$ is $G_0$-conjugate to at least one element in $\mathcal{G}'$. The problem is that, in general, we do not know $\rho(G_0)$. Here we discuss how to construct  elements of $\rho(G_0)$ which often can be used to bring elements in $\mathcal{G}$ into the form  $c_1y_1+\ldots+c_sy_s$ where some of the coefficients $c_i$ are 0, while others are $\pm 1$.

\subsubsection{Nilpotent elements}\label{secsn}
If $x$ is a nilpotent element of $\fc_0$, then $\exp(\ad_\g(x))$ lies in
$G_0$. We know from \cite[Thm 0.23]{knapp02} that  $\rho(\exp(\ad_\g(x))) =
\exp (\text{d}\rho(x) )$, and  $\text{d}\rho(x)$ is the restriction of $\ad_\g(x)$
to $\g_1$. Thus,  $\rho(\exp(\ad_\g(x)))$ stabilises $\fc_1$, and
its restriction to $\fc_1$ is $\exp(y)$, where $y$ is the restriction of
$\ad_\g(x)$ to $\fc_1$. Typically, we can use such elements to show that some coefficients of $e\in\mathcal{G}$ may be
assumed to be 0.

\subsubsection{Semisimple elements}\label{secsss}
By construction, $\fc$ is $\h_0$-regular, where $\h_0$ is a $\theta$-stable 
Cartan subalgebra of $\g_0$. Note that $\fc_1^c$ is spanned by $\h_0^c$-weight
vectors. So $\fc_1^c$ is normalised by $\h_0^c$, and therefore $\fc_1$
is normalised by $\h_0$. 
Let $H_0$ be the connected Lie subgroup of $G_0$ with Lie algebra $\h_0$;  we describe how to 'parametrise'
$H_0$ and find  elements in $\rho(H_0)$. Recall that $\h_0$ is a toral subalgebra of $\g$; we start with a general construction for such subalgebras.

Let $\ft^c$ be a toral subalgebra of $\g^c$, which we
assume to be algebraic, that is, there is an algebraic subgroup $T^c$ of $G^c$
whose Lie algebra is $\ad_{\g^c}(\ft^c)\cong \ft^c$; let $n= \dim\g^c$. First, suppose that $\ad_{\g^c} (\ft^c)$ consists of diagonal matrices.
Let
$\Lambda\subseteq \Z^n$ be the lattice defined by 
$$\Lambda = \{ (e_1,\ldots,e_n)\in \Z^n \mid e_1\alpha_1+\ldots+e_n\alpha_n = 
0 \text{ for all } \diag(\alpha_1,\ldots,\alpha_n)\in {\ad}_{\g^c}
(\ft^c)\};$$
here $\diag(\alpha_1,\ldots,\alpha_n)$ denotes the diagonal matrix
with diagonal entries $\alpha_1,\ldots,\alpha_n$. If we define
$$T^c = \{ \diag(\alpha_1,\ldots,\alpha_n)\in \C^n \mid \alpha_1^{e_1}\cdots\alpha_n^{e_n}=1 \text{ for all } e = (e_1,\ldots,e_n)\in \Lambda\},$$
then $T^c$ is a connected algebraic subgroup of $\GL(n,\C)$ with Lie algebra 
$\ad_{\g^c} (\ft^c)$, see \cite[\S II.7.3]{borel};
the set-up considered in \cite{borel} is slightly 
different, but the proof is the same; alternatively, see  \cite[\S 13, Prop.\ II.3]{chevii}. Let $E= \{ e^1,\ldots,e^r\}$ with $e^k=(e_1^k,\ldots,e_n^k)$ be a $\Z$-basis of $\Lambda$, and define $L\subseteq \Z^n$ as the lattice consisting of all $(d_1,\ldots,d_n)$ with
$d_1 e^k_1+\ldots+d_n e^k_n = 0$ for all $1\leq k\leq r$. Let $\{d^1,\ldots,d^s\}$ be a basis 
of $L$ and define \[\eta_j \colon \C^*\to T^c, \quad  t\mapsto \diag(t^{d_1^j},\ldots,t^{d_n^j}).\] Now the map $\eta\colon (\C^*)^s \to
T^c$,  $(t_1,\ldots,t_s) \mapsto \eta_1(t_1)\cdots \eta_s(t_s)$, is an isomorphism of algebraic groups. On the other hand, if $\ft^c$ is diagonalisable, but not diagonal, then there is  $A\in \GL(n,\C)$ such that $\widetilde{\ft}^c = A\ad_{\g^c} (\ft^c) A^{-1}$ 
consists of diagonal matrices. The construction above, applied to $\widetilde{\ft}^c$, yields an isomorphism $\widetilde{\eta}\colon (\C^*)^s \to 
\widetilde{T}^c$ and we define $\eta \colon (\C^*)^s \to T^c$,  $a\mapsto A^{-1} \widetilde{\eta}(a) A$. Thus, in both cases, we obtain an isomorphism \[\eta \colon (\C^*)^s \to T^c.\]
We apply  this procedure to $\ft^c=\h_0^c$ and $T^c=H_0^c\subset G_0^c$. Then $\eta((\C^*)^s) \cap \GL(n,\R) \subset
G_0$, and restricting elements of this set to $\g_1$ yields elements of 
$\rho(G_0)$, normalising $\fc_1$. 

It turns out that it is a good idea to apply this construction to the toral subalgebras $\h_0^c\cap\fk^c$ and $\h_0^c\cap\fp^c$ separately. For the latter algebra, the diagonalising matrix $A$
is defined over $\R$, hence we can simply restrict $\eta$ to $(\R^*)^s$ to get a large set of semisimple elements of $\rho(G_0)$. For $\h_0^c\cap\fk^c$ the diagonalising matrix $A$ is not defined over $\R$, and it may not be straightforward to find the subset of $(\C^*)^s$ which is mapped by $\eta$ into $G_0$. However, typically, the matrix $\eta_j(t)$ has a 
block diagonal form with $2\times 2$-blocks of the form
$B(t^k)$  for some $k\in\Z$, where
$$B(t) = \left(\begin{smallmatrix} \tfrac{1}{2}(t+t^{-1}) & \tfrac{1}{2}\imath (t-t^{-1})\\
-\tfrac{1}{2}\imath (t-t^{-1}) & \tfrac{1}{2}(t+t^{-1})\end{smallmatrix}\right),$$ see the comment below. Note that  $B(s)B(t) = B(st)$, and $B(t)$ has coefficients in $\R$ if and only if $t=x+\imath y$ with
$x^2+y^2=1$; in which case
$$B(t) = \left(\begin{smallmatrix} x & y \\ -y & x\end{smallmatrix}\right)$$
is the matrix describing rotation about the origin of $\R^2$ about the angle $\arccos(x)$. If $k\ne 0$, then  $t=x+\imath y$ runs through the circle defined by $x^2+y^2=1$ if and only
if $t^k$ does so. The nonzero orbits of
the group consisting of all $B(t)$, with $t=x+\imath y$ and $x^2+y^2=1$,
acting on $\R^2$, have representatives $(\alpha,0)$, $\alpha\geq 0$. 

In all examples we considered, the matrices of $\eta_j(t)$ had this
block diagonal form; this is no coincidence, in view of the normal form
theorem for orthogonal real matrices \cite[Thm VI.9]{jac2}, 
and the fact that the elements of the subgroup of $G$ corresponding to
$\fk$ are orthogonal transformations, cf.\ \cite[Prop.\ 5.1(i)]{onishchik}.

\subsection{$G_0$-invariants}\label{sec:invar} We describe some methods for establishing that two nilpotent
$e_1,e_2\in \g_1$ are {\em not} $G_0$-conjugate. A very powerful invariant is
the carrier algebra: if the carrier algebras of $e_1$ and $e_2$ are not isomorphic, then $e_1$ and $e_2$ cannot be $G_0$-conjugate. However, as we construct nilpotent elements in $\g_1$ 
by first listing the $G_0$-conjugacy classes of carrier algebras,
we may assume that both $e_1$ and $e_2$ have the same carrier algebra $\fc$. In particular,
viewed as elements of $\g_1^c$, they are $G_0^c$-conjugate. 
\begin{ithm}
\item[(i)] If $e_1,e_2$ are $G_0$-conjugate, then their centralisers $\fz_\g(e_i)$
are as well. Thus, if these centralisers contain different $G$-classes of 
nilpotent elements, then the $e_i$ cannot be $G_0$-conjugate. This 
can be checked using the Kostant-Sekiguchi correspondence, see \cite[\S 9.5]{colmcgov}, and \cite{dg} for a computational approach.
Similarly, we can study the centralisers $\fz_{\g_0}(e_i)$.
\item[(ii)] Let $(h_i,e_i,f_i)$ be a homogeneous $\SL_2$-triple, spanning the subalgebra $\fa_i$; by Remark \ref{remGP}, we have $h_1=h_2=h$, where  $h/2$ is the unique defining element of the carrier algebra of $e_1$ and $e_2$.  Recall that  $e_1$ and $e_2$ are $G_0$-conjugate if and only if these triples are; thus we can consider the centralisers
$\fz_\g(\fa_1)$ and $\fz_{\g}(\fa_2)$. We can check whether they are
isomorphic over $\R$, or whether they contain different $G$-classes of nilpotent elements. If any of these tests fails, then $e_1$ and $e_2$ cannot be $G_0$-conjugate. Similarly, we can study the centralisers $\fz_{\g_0}(\fa_i)$.
\item[(iii)] Using the notation of (ii), we also investigate centralisers
  of the semisimple elements $e_i-f_i$ or $e_i+f_i$. 
\end{ithm}

\section{Examples of nilpotent orbit computations}\label{secex}

\noindent We discuss in detail three interesting examples, motivated
by the literature; the gradings we use are constructed as in Example \ref{exa:grad}. In particular, this construction already gives as a Cartan involution which reverses the grading, which is supposed in most of our main algorithms.

\begin{example}
We consider a $\Z$-graded simple Lie algebra of type $E_8$, as in 
Example \ref{exa:grad}a). The degrees of the simple roots are given by
the diagram
\begin{center}
\begin{picture}(70,17)
\put(3,0){\circle{6}}
  \put(18,0){\circle{6}}
  \put(33,0){\circle{6}}
  \put(48,0){\circle{6}}
  \put(33,15){\circle*{6}}
\put(63,0){\circle{6}}
\put(78,0){\circle{6}}
\put(93,0){\circle{6}}
  \put(6,0){\line(1,0){9}}
  \put(21,0){\line(1,0){9}}
  \put(36,0){\line(1,0){9}}
  \put(33,3){\line(0,1){9}}
  \put(51,0){\line(1,0){9}}
  \put(66,0){\line(1,0){9}}
  \put(81,0){\line(1,0){9}}
\end{picture}
\end{center}
where the simple root corresponding to the black node has degree 1, and all  others
have degree 0. This  example has also been considered by Djokovi{\'c} \cite{djoTri}, and   $G_0 \cong \GL(8,\R)$ and  $\g_1 \cong  \R^8\wedge \R^8\wedge \R^8$ as $G_0$-module. Djokovi{\'c} classified the nilpotent $G_0$-orbits in $\g_1$ using an approach based
on Galois cohomology. Concerning the results obtained with our methods, we remark the following:
\begin{ithm}
\bullit There are 53 $G_0$-conjugacy classes of real carrier algebras
(whereas there are 22 $G_0^c$-conjugacy classes of carrier algebras in
$\g^c$); our program needed 2567 seconds to list them. 
\bullit Although there are real carrier algebras which contain representatives of more than one nilpotent $G_0$-orbit, in all
cases all (but at most one) are ruled out by the condition of Proposition
\ref{propcarrk}c). So each real carrier algebra corresponds to at most one 
nilpotent orbit. In particular, in this example, there is no need
to use the criteria outlined in Section \ref{sec:invar}.
\bullit Exactly 34 carrier algebras correspond to exactly one nilpotent orbit, the
others correspond to no nilpotent orbit.
\bullit We find that each complex orbit splits in exactly the same number
of real nilpotent orbits as in \cite{djoTri}; so our classification
and the one of Djokovi{\'c} are equivalent. Moreover, we checked the representatives
given by Djokovi{\'c}, and they all turned out to lie in the correct nilpotent orbit.
\bullit  Djokovi{\'c} also computed the isomorphism types of the centralisers
$\fz_{\g_0}(\fa)$, where $\fa$ is the subalgebra spanned by
a homogeneous $\SL_2$-triple. Here, on some occasions, we get different results. For example, the complex orbit labelled ``VI'' in \cite{djoTri} has only one real
representative, and Djokovi{\'c} claims  that  $\fz_{\g_0}(\fa)$ is isomorphic to $3\SL_2(\R)$;  we find that  $\fz_{\g_0}(\fa)$ is isomorphic to $\mathfrak{sp}_6(\R) \oplus \ft_2$, where $\ft_2$ is a $2$-dimensional toral subalgebra lying 
inside $\fp$.
\end{ithm} 
\end{example}

\begin{example}
We consider a $\Z_2$-graded simple Lie algebra of type $G_2$, as in 
Example \ref{exa:grad}b). The complex simple Lie algebra of type $G_2$ has
only one (conjugacy class of) involution, which we use to construct
the grading; this grading is also studied in the physics literature,
see \cite{G2phys}. Here $\g_0$ has four (classes of) Cartan
subalgebras, but strongly regular carrier algebras only exist with
respect to the split Cartan subalgebra. There are five of these carrier algebras, like in the complex case. Three of them yield one nilpotent orbit, and
two correspond to two nilpotent orbits. This leads to an equivalent 
classification as derived in \cite{G2phys}.

We now study the carrier algebra $\fc$ that is isomorphic to $\g$ (as Lie algebra) in a more detail:  it has dimension 14, and $\dim \fc_0 = \dim \fc_1= 4$. Let $\{y_1,\ldots,y_4\}$ be  a fixed basis of $\fc_1$, as computed by our
programs. Using the approach of Section \ref{sec:genpos},  an element $\sum_i c_i y_i\in\fc_1$ is in general position if and
only if   
\begin{equation}\label{eqdense}
c_1^2c_4^2-6c_1c_2c_3c_4+4c_1c_3^3+4c_2^3c_4-3c_2^2c_3^2\neq 0.
\end{equation}
Unfortunately, this polynomial is irreducible over $\Q$. The semisimple part of
$\fc_0$ is isomorphic to $\SL_2(\R)$, so it has two nilpotent basis elements,  denoted by $u$ and $v$.
The exponentials of their adjoints, restricted to $\fc_1$, are 
$$\exp( s \ad\nolimits_\g(u) ) = \left(\begin{smallmatrix}
1 &       0 &       0 &       0 \\
        -s &       1 &       0 &       0 \\
   s^2 &  -2s &       1 &       0  \\
  -s^3 &  3s^2 &  -3s &       1 \end{smallmatrix}\right)\quad\text{and}\quad \exp( t \ad\nolimits_\g(v) ) = \left(\begin{smallmatrix}
1 &  -3t &  3t^2 &    -t^3 \\
         0 &       1 &  -2t &   t^2 \\
         0 &       0 &       1 &      -t \\
         0 &       0 &       0 &       1\end{smallmatrix}\right),$$
{where $s,t\in\R$.} Let $e=\sum_i c_i y_i$ be an element in general position; we now use Section \ref{secsn} and act with $\exp(s\ad_\g(u))$ and  $\exp(t\ad_\g(v))$. 
First, after
acting with an $\exp( s \ad (u) )$ (if required), we can assume $c_4\neq 0$. 
Note that  $\exp(t \ad_\g (v)) e = \sum_i c_i' y_i$ where $c_1' = c_1-3tc_2+3t^2c_3-t^3
c_4$ is a polynomial in $t$ of degree 3; this polynomial has a real zero $t_0$, and, by acting with $\exp(t_0 \ad_\g (v))$, we may assume that $c_1=0$. Now it follows from \eqref{eqdense} that $c_2\neq 0$. Write $\exp(s \ad_\g (u)) e=
\sum_i c_i' y_i$, so that $c_1'=0$, $c_2'=c_2$ and $c_3' = -2sc_2+c_3$. We can choose $s$ so that  $c_3'=0$; in conclusion,  we may assume that $c_1=c_3=0$, and $c_2\ne 0\ne c_4$ by  \eqref{eqdense}.

Using Section \ref{secsss}, the group corresponding to the split Cartan subalgebra of $\g_0$ acts on $\fc_1$ as
$$\diag(a_1^3a_2^8,a_1^2a_2^5,a_1a_2^2,a_2^{-1}).$$
where each $a_i\in \R^*$. Acting with this group on $e=c_2y_2+c_4y_4$, we can multiply $c_4$ by an arbitrary $b\in \R^*$,
and $c_2$ by an $a\in \R^*$, provided that $ab$ is a square in $\R^*$.
We get two possible nilpotent orbits,  represented by $e_1 = y_2+y_4$ and  $e_2=-y_2+y_4$ respectively. For $i=1,2$ let $\fa_i$ denote the subalgebra
spanned by the homogeneous $\SL_2$-triple $(h,e_i,f_i)$. It turns out that
$\fz_{\g_0}(\fa_i) = 0$ and $\fz_{\g_0}(\fc)=0$, so the condition of Proposition
\ref{propcarrk}c) is trivially satisfied. Now consider $u_i = e_i-f_i$; its centraliser in $\g_0$ is $1$-dimensional and spanned by a semisimple
element $t_i$. The minimal polynomials of $\ad_{\g_0}(t_1)$ and  $\ad_{\g_0}(t_2)$ are $(X-6)(X-2)X(X+2)(X+6)$ and $X(X^2+4)(X^2+36)$, respectively. So $t_1$ is
not $G_0$-conjugate to $\lambda t_2$ for all $\lambda\in\R$. Thus, the 
centralisers of the $u_i$ are not $G_0$-conjugate, and hence neither are
$e_1$ and $e_2$.
\end{example}

\begin{example}
We consider a $\Z$-graded simple Lie algebra of type $E_8$, as in 
Example \ref{exa:grad}a). The degrees of the simple roots are given by
the diagram
\begin{center}
\begin{picture}(70,17)
\put(3,0){\circle*{6}}
  \put(18,0){\circle{6}}
  \put(33,0){\circle{6}}
  \put(48,0){\circle{6}}
  \put(33,15){\circle{6}}
\put(63,0){\circle{6}}
\put(78,0){\circle{6}}
\put(93,0){\circle{6}}
  \put(6,0){\line(1,0){9}}
  \put(21,0){\line(1,0){9}}
  \put(36,0){\line(1,0){9}}
  \put(33,3){\line(0,1){9}}
  \put(51,0){\line(1,0){9}}
  \put(66,0){\line(1,0){9}}
  \put(81,0){\line(1,0){9}}
\end{picture}
\end{center}
where the simple root corresponding to the black node has degree 1, 
and all  others have degree 0. Over the complex numbers this grading has been
considered in \cite{gattivin}, where it is shown that there are nine
nonzero orbits, and for each orbit a representative is given. 
Here $G_0^c \cong \mathrm{Spin}_{14}(\C)\times \C^*$ and $\g_1^c$ is (as 
$G_0^c$-module) the 64-dimensional spinor representation of
$\mathrm{Spin}_{14}(\C)$. (The orbits of $\mathrm{Spin}_{14}(\C)$
on this space have also been classified by Popov in \cite{popov}; but without
making use of graded Lie algebras.)
We have computed the orbits of $G_0$ acting on $\g_1$, and we remark the
following:
\begin{items}
\bullit It took 4409 seconds to complete the classification of the carrier
algebras.
\bullit There are 10 Cartan subalgebras in $\g_0$ (up to $G_0$-conjugacy).
\bullit There are 26 carrier algebras (up to $G_0$-conjugacy), and all of
them are principal. 
\bullit Each carrier algebra corresponds to at most one orbit; this yields
15 orbits in total.
\end{items}

In Table \ref{tab:spin} we summarise the results of our computations. 
In this table, the first column lists the label of the corresponding
complex orbit; we use the same numbering as in \cite{gattivin}. In
particular, the rows of the table
correspond to the complex orbits. The second column contains the 
dimension of the orbit. The third column lists a representative of the orbit.
These representatives are given as spinors; for the notation we refer to
\cite[\S 2.0]{gattivin}. If a complex orbit splits into more than one real orbit, then the third and 
fourth column have extra lines in the corresponding row, with the
corresponding information for each real orbit. The fourth column
contains the isomorphism type of the centraliser in $\g_0$ of a homogeneous
$\SL_2$-triple containing the given representative. We use the
following notation: $\ft_{r,s}$ denotes a $\theta$-stable toral subalgebra
such that $\dim(\fk\cap \ft_{r,s})=r$ and $\dim(\fp\cap \ft_{r,s})=s$
where $\g=\fk\oplus\fp$ as usual; the
Lie algebras $G_2$, $G_2(\C)$, and $G_2^{\textrm{cpt}}$ are the split real 
form of the Lie algebra if type $G_2$, the complex Lie algebra of type $G_2$ seen as 
real, and the compact real form of $G_2$, respectively.

\begin{table}[ht]
\caption{Real orbits of $\mathrm{Spin}_{14}(\R)\times \R^*$}\label{tab:spin}
\setlength{\extrarowheight}{0.6ex}
\centering \small
\begin{tabular}{|c|c|l|c|}
\hline
\scc orbit & \scc $\dim$ &\multicolumn{1}{|c|}{\scc representative} &\scc $\fz_{\g_0}(\fa)$ \\
\hline
I & 22 & $f_1f_5f_6f_7$ & $\SL_7(\R)\oplus \ft_{0,1}$\\
\hline
II & 35 & $f_1f_5f_6f_7-f_2f_4f_5f_6$ & $\SL_3(\R)\oplus \mathfrak{so}_7(\R)\oplus
\ft_{0,1}$\\
\hline
III & 43 & $f_1f_2+f_1f_4f_6f_7+f_2f_3f_4f_5$ & $\mathfrak{sp}_6(\R)\oplus
\ft_{0,2}$\\
\hline 
IV & 44 & $f_1f_5f_6f_7+f_2f_3f_5f_7$ & $\SL_6(\R)\oplus \ft_{0,1}$\\\cdashline{3-4}
& & $-f_1f_2f_4f_7+f_1f_2f_5f_6-f_1f_2f_5f_7-f_1f_4f_5f_6$ &  $\mathfrak{su}(3,3)
\oplus \ft_{0,1}$\\
\hline
V & 50 & $f_1f_2+f_1f_4f_6f_7+f_1f_5f_6f_7+f_2f_3f_4f_5$ & $\SL_4(\R)\oplus 
\ft_{0,1}$\\
\hline 
VI & 54 & $f_1f_3+f_1f_5f_6f_7-f_2f_4f_5f_6$ & $2\SL_3(\R)\oplus 
\ft_{0,1}$\\\cdashline{3-4}
&& $-f_1f_2f_4f_7-f_1f_2f_5f_6-f_1f_2f_5f_7-f_1f_3f_4f_5+f_1f_4f_5f_6$ &
$\SL_3(\C)\oplus \ft_{0,1}$\\
\hline
VII & 59 & $f_1f_2+f_1f_4f_6f_7+f_2f_3f_4f_5-f_3f_4f_5f_6$ & 
$\SL_2(\R)\oplus \mathfrak{so}_5(\R)\oplus \ft_{0,1}$\\\cdashline{3-4}
&& $-f_1f_2f_4f_7+f_1f_2f_5f_6-f_1f_2f_5f_7-f_1f_4f_5f_6-f_2f_3f_6f_7+$
 & $\mathfrak{su}(2)\oplus\mathfrak{so}(1,4)\oplus \ft_{0,1}$\\
&& $f_1f_2f_3f_4f_6f_7$ & \\
\hline
VIII & 63 & $f_1f_2+f_1f_4f_6f_7+f_1f_5f_6f_7+f_2f_3f_4f_5-f_3f_4f_5f_6$ &
$G_2\oplus \ft_{0,1}$\\\cdashline{3-4}
&& $1-f_1f_2f_4f_7+f_1f_2f_5f_6+f_1f_2f_5f_7+f_1f_4f_5f_6+f_2f_3f_6f_7+$
& $G_2^{\mathrm{cpt}}\oplus \ft_{0,1}$\\
&& $f_1f_2f_3f_4f_6f_7$ & \\
\hline
IX & 64 & $f_1f_3+f_1f_5f_6f_7+f_2f_3f_5f_7-f_2f_4f_5f_6$ & $2G_2$ \\\cdashline{3-4}
&& $f_4f_5-f_1f_2f_4f_7-f_1f_2f_5f_6+f_1f_2f_5f_7-f_1f_4f_5f_6+f_2f_3f_6f_7$ &
$G_2(\C)$\\\cdashline{3-4}
&& $f_1f_2-f_5f_6+f_1f_2f_4f_5+f_1f_2f_4f_6-f_1f_4f_5f_7-f_1f_5f_6f_7+$
& $2G_2^{\mathrm{cpt}}$ \\
&& $f_2f_3f_4f_5-f_1f_2f_4f_5f_6f_7$ & \\
\hline
\end{tabular}
\end{table}
\end{example}

\bibliographystyle{plain}

\end{document}